\newtheorem{lemma}{Lemma}[section]
\newtheorem{theorem}[lemma]{Theorem}
\newtheorem{proposition}[lemma]{Proposition}
\newtheorem{prop}[lemma]{Proposition}
\newtheorem{cor}[lemma]{Corollary}
\newtheorem{claim*}{Claim}
\newtheorem{thm}[lemma]{Theorem}
\newtheorem{defn}[lemma]{Definition}
\theoremstyle{definition}
\newtheorem{remark}[lemma]{Remark}
\newtheorem{rmk}[lemma]{Remark}
\numberwithin{equation}{section}
\numberwithin{table}{section}
\title{Abelian Calabi-Yau threefolds: N\'eron models and rational points}
\author{Fedor Bogomolov}
\author{Lars Halvard Halle}
\author{Fabien Pazuki}
\author{Sho Tanimoto}
\address{Courant Institute New York University, New York, NY 10012, USA }
\address{National Research University Higher School of Economics, Russian Federation}
\email{bogomolov@cims.nyu.edu}
\address{Department of Mathematical Sciences, University of Copenhagen, Universitetsparken 5 2100 Copenhagen $\emptyset$, Denmark}
\email{larshhal@math.ku.dk}
\email{fpazuki@math.ku.dk}
\email{sho@math.ku.dk}
\urladdr{https://shotanimoto.wordpress.com}
\date{}
\subjclass[]{}
\begin{document}


	\maketitle
	

\section{Introduction}

Let $k$ be a number field and let $X$ be a projective variety defined over $k$. Lang conjectured that when $X$ is of general type, for any finite extension $k'/k$, the set of $k'$-rational points $X(k')$ is not Zariski dense in $X$. One may ask for the converse: which varieties $X$ satisfy \emph{potential density} in the Zariski topology, \textit{i.e.}, $X(k')$ is Zariski dense in $X$ for some finite extension $k'/k$? In view of this conjecture of Lang, one is lead to ask whether rational points are potentially dense if neither $X$ nor its unramified coverings admit a morphism onto a variety of general type. It is expected that the class of Fano varieties satisfies this potential density property, and it is verified for most Fano 3-folds, except for double covers of $\mathbb P^3$ ramified along smooth surfaces of degree $6$ where the result is still unknown. See \cite{HaTs00} and \cite{BoTs99} for more details.

An interesting question is: how about intermediate classes of varieties, \textit{e.g.}, varieties of Kodaira dimension zero? In dimension $1$, a curve of Kodaira dimension zero is an elliptic curve and it is well-known that it satisfies potential density. In dimension $2$, according to the classification theory, there are $4$ types of minimal smooth projective surfaces of Kodaira dimension $0$:
\begin{itemize}
\item abelian surfaces;
\item bielliptic surfaces;
\item Enriques surfaces;
\item K3 surfaces.
\end{itemize}
Abelian surfaces satisfy potential density (see \cite[Proposition 3.1]{HaTs00}) so bielliptic surfaces do as well. Potential density of Enriques surfaces is proved in \cite{BoTs98}, hence the only remaining case is the class of K3 surfaces. It is shown in \cite{BoTs00} that if a K3 surface $X$ admits an infinite group of automorphisms, or an elliptic fibration, then $X$ satisfies potential density. 
\\

A problem which will be addressed in this paper is a higher dimensional analogue of the aforementioned theorems, using abelian surface fibrations instead of elliptic fibrations. The question of whether it could be possible to find Calabi-Yau threefolds where the rational points are potentially dense was asked in Tschinkel's ICM talk \cite{Tsc06} (more precisely, it is the question raised just after his Problem 3.5).

We will thus focus on Calabi-Yau threefolds with an abelian surface fibration, and explore the interplay between Zariski potential density properties of their $k$-rational points, the structure of the Mordell-Weil group of the generic fibers, and their N\'eron models. These notions are intimately linked. For instance, while solving the question of whether the $k$-rational points are (potentially) Zariski dense on a Horrocks-Mumford quintic threefold in Section~\ref{Horrocks}, we first compute the structure of the Mordell-Weil group of its generic fiber, and computing this structure requires in our proof a fine property of N\'eron models. As we believe that the description of the N\'eron models we use will be of interest independently, and concerns more general cases, we give it here as a first main theorem. 
\begin{thm}
Let $k$ be a field of characteristic zero
and $X$ a Calabi-Yau threefold admitting an abelian surface fibration $f \colon X \rightarrow \mathbb P^1$ with a section.
Then the $f$-smooth part $\mathcal A \subset X \rightarrow \mathbb P^1$ forms the N\'eron-model of the generic fiber $X_\eta$.
\end{thm}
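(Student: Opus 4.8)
The plan is to reduce the assertion to a local statement over the henselization of $\PP^1$ at each closed point and then to identify the $f$-smooth locus of a regular proper model with the N\'eron model. Since the N\'eron model of $X_\eta$ over $\PP^1$ is obtained by gluing the N\'eron models over the local rings $\mathcal O_{\PP^1,s}$, and since over the open locus $U\subset\PP^1$ on which $f$ is smooth the restriction $f|_U$ is an abelian scheme (hence already the N\'eron model there), it suffices to fix a point $s$ of bad reduction, set $R=\mathcal O_{\PP^1,s}^{\mathrm{sh}}$ with fraction field $K$, and prove that $X_R^{\mathrm{sm}}:=\calA\times_{\PP^1}\Spec R$ is the N\'eron model of the abelian variety $A:=X_\eta$ over $R$.

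First I would establish that $X_R^{\mathrm{sm}}$ is a weak N\'eron model. As $X$ is smooth over the characteristic-zero field $k$ it is regular, so $X_R$ is a regular scheme, and $f_R\colon X_R\to\Spec R$ is proper and flat (flat because $X$ is integral and $f$ is dominant, proper because $X$ is projective) with generic fiber the abelian variety $A$. By the valuative criterion every point of $A(K)$ extends uniquely to a section in $X_R(R)$, and because $X_R$ is regular such a section meets the special fiber in a point at which $f_R$ is smooth (a section through a regular point of a model over a discrete valuation ring lands in the smooth locus). Hence $X_R^{\mathrm{sm}}(R)=X_R(R)=A(K)$, and the same holds after further \'etale base change, so $X_R^{\mathrm{sm}}$ is a weak N\'eron model of $A$.

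The N\'eron mapping property then yields a canonical $R$-morphism $u\colon X_R^{\mathrm{sm}}\to\calA$ restricting to the identity on $A$, and the whole problem becomes showing that $u$ is an isomorphism. Both source and target are smooth separated $R$-schemes of relative dimension $2$ and $u_K$ is an isomorphism. Surjectivity of $u$ on special fibers is automatic: over the henselian ring $R$ smoothness makes the reduction maps $X_R^{\mathrm{sm}}(R)\to X_s^{\mathrm{sm}}(\kbar)$ and $\calA(R)\to\calA_s(\kbar)$ surjective, and these liftings agree under the equality $X_R^{\mathrm{sm}}(R)=\calA(R)=A(K)$, so $u_s$ is surjective on $\kbar$-points. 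Consequently, once $u$ is known to be an open immersion it is automatically an isomorphism, since an open subscheme of $\calA$ containing the entire generic fiber together with every closed point of the special fiber must be all of $\calA$.

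The heart of the matter, and the step I expect to be the main obstacle, is therefore proving that $u$ contracts no component of the special fiber $X_s^{\mathrm{sm}}$, i.e.\ that $u$ is quasi-finite. This is exactly where the Calabi-Yau hypothesis is indispensable: for a general regular proper model the smooth locus is strictly larger than the N\'eron model, because superfluous multiplicity-one components (for instance exceptional divisors created by blowing up) are collapsed by $u$, and then $u$ fails to be quasi-finite. The triviality of the canonical class, $\omega_X\cong\mathcal O_X$, makes $f\colon X\to\PP^1$ a relatively minimal model, so no such contractible component can occur; I would make this precise by arguing that a component of $X_s^{\mathrm{sm}}$ collapsed by $u$ would exhibit an $f$-exceptional divisor on the minimal model $X$, which is impossible since $K_X$ is $f$-nef. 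With no component contracted, $u$ is quasi-finite; being in addition separated and birational onto the normal scheme $\calA$, Zariski's main theorem forces $u$ to be an open immersion, which combined with the surjectivity above completes the proof. Controlling the components and multiplicities of the degenerate abelian-surface fibers and translating relative minimality into the absence of collapsed components is the genuinely delicate part of the argument.
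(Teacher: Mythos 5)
Your overall architecture coincides with the paper's: reduce to the local rings of $\PP^1$ at closed points (you henselize where the paper completes and then descends along an extension of ramification index one, an immaterial difference), observe that regularity of $X$ makes the smooth locus of the local model a weak N\'eron model, obtain the canonical surjection $u$ onto the N\'eron model, and then show that $u$ is an open immersion using triviality of the canonical sheaf. The first three steps are fine and agree with what the paper does via Proposition \ref{prop-Kulikov} and Theorem \ref{theorem: Neronmodel}.

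The gap is in the step you yourself flag as the heart of the matter. Your proposed argument --- that a component of $X_s^{\mathrm{sm}}$ collapsed by $u$ ``would exhibit an $f$-exceptional divisor on the minimal model $X$, which is impossible since $K_X$ is $f$-nef'' --- is not an argument as it stands. Every component of every closed fiber of $f$ is contracted by $f$, so ``$f$-exceptional divisor'' is not the relevant notion here; and the contraction you must exclude is performed by $u$, a morphism to the non-proper group scheme $\calA$, not by a birational morphism between proper $R$-models to which relative minimality or MMP language directly applies. (In the elliptic-curve case this is precisely where all the work lies: a multiplicity-one $(-1)$-curve in a non-minimal regular model sits in the smooth locus and is contracted by $u$, and one must show minimality forbids this.) The way the step is actually made precise --- and this is the content of \cite[Cor.~6.7]{HaNi}, quoted in the paper as Proposition \ref{prop-Kulikov} --- is to take a generator $\omega$ of $H^0(X_\eta,\omega_{X_\eta})$, extend it to a section of the relative dualizing sheaf of the local model, and compare its order of vanishing along the components of the special fiber of the weak N\'eron model: the components surviving into the N\'eron model are exactly those on which this order is minimal, and triviality of the relative dualizing sheaf (the Kulikov condition, which the paper deduces in Lemma \ref{lemma-canonical} from $\omega_X\cong\mathcal O_X$ together with local triviality of $\omega_{\PP^1}$) forces all these orders to coincide, so that no component is contracted. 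Without this, or an equivalent substitute, your proof does not close; with it, the remainder of your argument (quasi-finite plus birational onto a normal target gives an open immersion, and a surjective open immersion is an isomorphism) goes through as you describe.
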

This result is a consequence of the more general Theorem \ref{theorem: Neronmodel}; its statement and proof is given in Section~\ref{Neron section}. See Definition~\ref{defn: neron} for the definition of N\'eron models.
In Section~\ref{sec: potential} we discuss potential density of abelian Calabi-Yau threefolds and show that if an abelian Calabi-Yau threefold admits a second fibration, then it satisfies potential density in the Zariski topology.

\begin{thm}(Theorem \ref{thm: doublefibration})
Let $X$ be a Calabi-Yau threefold defined over a number field $k$ admitting an abelian surface fibration $f \colon X \rightarrow \mathbb P^1$ with a section.
Suppose that $X$ has an effective movable non-big divisor $D$ such that the abelian fibration on $X$ does not factor through the Iitaka fibration of $D$ birationally. Then $X$ satisfies the potential density property with respect to the Zariski topology.
\end{thm}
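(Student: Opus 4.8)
The plan is to reduce potential density on $X$ to producing, after a finite extension $k'/k$, a Zariski-dense subgroup of the Mordell--Weil group of the generic fiber. Set $A := X_\eta$; since $f$ has a section, $A$ is an abelian surface over $K := k(\mathbb{P}^1)$ with origin $O$ given by the section. Given $P \in A(K')$ with $K' = k'(\mathbb{P}^1)$, the first main theorem identifies the $f$-smooth locus $\mathcal A$ with the N\'eron model, so $P$ spreads out to a section of $\mathcal A \to \mathbb{P}^1$ and hence to a rational curve $C_P \subset X_{k'}$; being rational, $C_P$ carries a Zariski-dense set of $k'$-points. If $\Gamma \subseteq A(K')$ is Zariski dense in $A$, then $\{C_P\}_{P \in \Gamma}$ is Zariski dense in $X_{k'}$ (its closure meets $X_\eta$ in the dense set $\Gamma$ and is horizontal), so $\bigcup_{P\in\Gamma} C_P(k')$ is dense. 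Hence it suffices to construct such a $\Gamma$; when the construction only yields points over a finite cover $B \to \mathbb{P}^1$, i.e. multisections, one argues on the base-changed family $X_B$ and pushes forward along the finite map $X_B \to X$, which preserves Zariski density.

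Next I would extract the needed sections from $D$. Let $g \colon X \dashrightarrow Z$ be the Iitaka fibration of $D$; since $D$ is effective and movable one has $\kappa(D) = \dim Z \ge 1$, and non-bigness gives $\dim Z \le 2$. The hypothesis that $f$ does not factor through $g$ birationally means exactly that $f$ is nonconstant on the general fiber $F$ of $g$, i.e. $f(F) = \mathbb{P}^1$, so that $F$ is transverse to the generic fiber $A$. When $\dim Z = 2$ the fibers $F$ are curves, and each is a multisection of $f$; when $\dim Z = 1$ the fiber $F$ is a surface carrying a nonconstant map $f|_F \colon F \to \mathbb{P}^1$, which cuts out on $A$ an elliptic subvariety (an effective non-big divisor of Iitaka dimension one on an abelian surface is, up to translation, supported on cosets of an elliptic subvariety). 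Crucially, movability guarantees that the fibers $F$ move in a family sweeping out all of $X$.

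The heart of the argument, and the step I expect to be the main obstacle, is turning this family into a Zariski-dense subgroup $\Gamma$. In the case $\dim Z = 2$, a multisection $F$ of degree $d$ determines the trace point $Q_F := \bigl(\sum_{x \in F \cap A} x\bigr) - d\cdot O \in A(K')$ after a finite extension, using the group law on $A$ together with a polarization; as $F$ varies in the covering family, I would show $Q_F$ is nonconstant, hence non-torsion, and that the resulting points do not all lie in a single proper abelian subvariety. This last point is exactly where both hypotheses enter: confinement of the $Q_F$ to a proper abelian subvariety $B \subsetneq A$ would exhibit a quotient through which $f$ factors compatibly with $g$, contradicting the non-factoring assumption, while torsion (equivalently isotriviality of the trace) is ruled out by movability forcing the family to cover $X$. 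In the case $\dim Z = 1$ I would argue analogously using the elliptic subvariety produced above, reducing to potential density for the two elliptic quotients. Assembling a suitably independent, rank-$\ge 2$ set of such points yields $\Gamma$ Zariski dense in $A$, completing the proof. The delicate points are the precise non-torsion and non-degeneracy statements for the trace points, which require controlling how the covering family of $F$'s varies inside the Mordell--Weil group.
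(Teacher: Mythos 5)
Your reduction (a Zariski-dense subgroup $\Gamma \subseteq A(K')$ with $K' = k'(\mathbb P^1)$ sweeps out $X_{k'}$ by rational sections carrying dense sets of $k'$-points) is correct as far as it goes, but the step you yourself flag as the main obstacle --- producing such a $\Gamma$ from trace points of the fibers of the Iitaka fibration --- is not merely delicate, it is impossible in general. The trace point $Q_F$ of a multisection $F$ defined over $k'$ lies in $\mathrm{MW}(A_\eta) = A(K')$, and for several of the examples to which the paper applies this very theorem (the Pfaffian, $(2,2,2,2)$ and Grassmannian families of Sections \ref{Pfaffian}, \ref{2222} and \ref{Grassmannians}) the threefold has Picard rank $2$, so by Corollary \ref{cor: oguiso} the Mordell--Weil group over $\overline{k}(\mathbb P^1)$ is \emph{finite} (e.g.\ $\mathbb Z_7 \oplus \mathbb Z_7$). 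Hence every $Q_F$ is torsion, no finite extension of $k$ helps, and no dense $\Gamma \subseteq A(K')$ can exist: the sections of $f$ simply do not sweep out a dense subset of $X$ in these cases, even though the theorem's hypotheses hold. Your fallback of base-changing to a finite cover $B \to \mathbb P^1$ replaces the problem by the equally unproven claim that the tautological point of $A(k'(B))$ generates a dense subgroup, and your treatment of the case $\dim Z = 1$ (``reducing to the two elliptic quotients'') is a sketch, not an argument.

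The paper takes a genuinely different route precisely in order to avoid controlling the generic Mordell--Weil group. It runs the $D$-MMP and invokes log abundance to make the strict transform of $D$ semi-ample, obtaining a second fibration $g$ whose general fiber $\tilde Y$ is an elliptic curve, an abelian surface, or an elliptic K3 surface --- in every case a variety with potentially dense rational points. It then chooses a smooth closed fiber $A_t$ and (after a finite extension, by Hassett--Tschinkel) a point $P \in A_t(k)$ with $\mathbb Z P$ dense in $A_t$, takes the fiber $Y$ of $g$ through $P$ (horizontal over $\mathbb P^1$ by your hypothesis on $D$), and base-changes to $V = Y \times_{\mathbb P^1} X$. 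The multiples of the tautological section of $V/Y$ specialize at $P$ to the dense set $\mathbb Z P \subset A_t$, so Lemma \ref{lemma: specialization} makes the union of these sections Zariski dense in $V$; each section is a copy of $Y$ with dense rational points, and $V \to X$ is dominant. The idea you are missing is that density need only be verified in a \emph{single closed fiber}, where it comes for free from Hassett--Tschinkel, rather than in the generic fiber, where it can genuinely fail.
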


\begin{rmk}
The existence of an effective non-big movable divisor implies that after some birational modification, $X$ admits another fibration. So in this sense, $X$ admits double fibrations.
\end{rmk}
The idea of using double fibrations to study potential density dates back at least to Swinnerton-Dyer \cite{Sw68}, see also more recently \cite{LMvL10} and \cite{Sw13}, all of which concern \textit{surfaces}.
 In sections \ref{(1,4)}, \ref{(1,6)}, \ref{Pfaffian}, \ref{2222}, \ref{Grassmannians} we will focus on examples of abelian Calabi-Yau \textit{threefolds} described in \cite{GrPo01} as explicit applications of our Theorem \ref{thm: doublefibration}. More precisely, we study Calabi-Yau threefolds fibered by polarized abelian surfaces of type $(1, d)$ where $d=4, 5, 6, 7, 8, 10$ and their arithmetic properties, \textit{e.g.} Zariski density and Mordell-Weil groups. It provides, together with the Horrocks-Mumford case in Section \ref{Horrocks}, several explicit examples of abelian Calabi-Yau threefolds satisfying potential density, answering Tchinkel's original question with two different methods.
 \\
	
\noindent
{\bf Acknowledgements}
The authors would like to thank Brendan Hassett for useful discussions and invaluable inputs.
The first author was partially supported by the Russian Academic Excellence Project `5-100' and by Simons Travel Grant.
The third author is supported by ANR-14-CE25-0015 Gardio.
The last three authors are supported by Lars Hesselholt's Niels Bohr Professorship.
	
\section{Generalities}
\label{sec: general}

\subsection{Calabi-Yau threefolds}

\begin{defn}
Let $k$ be a field of characteristic zero.
 {\it A Calabi-Yau threefold} is a geometrically connected smooth projective threefold $X$ satisfying $\omega_X \cong \mathcal O_X$ and $h^1(\mathcal O_X) =0.$
 We say that $X$ is a Calabi-Yau threefold in the strict sense if $X$ is additionally simply-connected.
\end{defn}

\begin{defn}
Let $C$ be a smooth curve of genus zero defined over $k$.
 A Calabi-Yau threefold $X$ is called {\it an abelian Calabi-Yau threefold} if it admits a fibration $\pi \colon X \rightarrow C$ defined over $k$
 whose generic fiber $X_\eta$ is an abelian surface with an identity over the function field $k(\eta)$. It is called {\it simple}
 if $X_\eta$ is a simple abelian surface over $k(\eta)$.
\end{defn}

\begin{rmk}
\label{rmk: P^1}
Over an algebraically closed field of characteristic zero, if we have an abelian fibration on a Calabi-Yau threefold $X$, then the base must be $\mathbb P^1$.
See \cite[Lemma 1.2]{GrPo01}.
\end{rmk}

\subsection{Abelian varieties}

\subsubsection{Polarization}

We recall the definition of a polarized abelian variety.

\begin{defn}
An abelian variety over $\mathbb{C}$ is a complex torus admitting a positive definite line bundle. A polarization on a complex torus $X$ is the first Chern class $H=c_1(L)$ of a positive definite line bundle $L$ on $X$. The pair $(X,H)$ is then a polarized abelian variety. 
\end{defn}

An ample line bundle $L$ on $A$ induces a map $\phi_{L}$ from $A$ to its dual $\check{A}$ given by $\phi_{L}(x)=t_x^{*}L\otimes L^{-1}$, where $t_x$ is the translation by $x$ morphism. The kernel $K(L)$ of $\phi_L$ is of the form $K(L)\simeq (\mathbb{Z}/d_1\mathbb{Z}\oplus\cdots \oplus \mathbb{Z}/d_g\mathbb{Z})^{\oplus 2}$, where $d_1\vert d_2\vert\ldots\vert d_g$ are positive integers. This decomposition only depends on $H$. The ordered $g$-tuple $(d_1,\ldots, d_g)$ is called the type of the polarization.  A polarization of type $(1,\ldots, 1)$ is called a principal polarization. For further details, see for instance \cite{GrPo98} page 335.

\subsubsection{Finiteness statements}

We recall in this paragraph two classical finiteness theorems, the Mordell-Weil Theorem for abelian varieties over number fields and the Lang-N\'eron Theorem for abelian varieties over more general fields, including function fields of characteristic zero.

\begin{theorem}\label{Mordell-Weil}(Mordell-Weil)
Let $A$ be an abelian variety defined over a number field $k$. Let $A(k)$ denote the set of $k$-rational points of $A$. Then there exist a non-negative integer $r_k$ and a finite subgroup $A(k)_{tors}\subset A(k)$ such that $$A(k)\simeq \mathbb{Z}^{r_k}\times A(k)_{tors}.$$ The integer $r_k$ is called the Mordell-Weil rank of $A(k)$.
\end{theorem}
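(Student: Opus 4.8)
The plan is to follow the classical two-step strategy: first prove the \emph{weak} Mordell-Weil theorem, namely that the quotient $A(k)/mA(k)$ is finite for some integer $m \geq 2$, and then combine this with the theory of heights through an infinite descent argument to conclude that $A(k)$ is finitely generated. The stated decomposition then follows from the structure theorem for finitely generated abelian groups, the torsion part being $A(k)_{tors}$ and the free rank being $r_k$.

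For the weak Mordell-Weil theorem I fix $m=2$ (any integer $m \geq 2$ works) and set $G_k := \Gal(\Qbar/k)$. After replacing $k$ by a finite extension containing the coordinates of the points of $A[m]$ — which is harmless, since finiteness of $A(k')/mA(k')$ over a finite extension $k'/k$ implies finiteness of $A(k)/mA(k)$ by an inflation-restriction argument — the multiplication-by-$m$ isogeny yields the Kummer exact sequence
$$0 \longrightarrow A(k)/mA(k) \longrightarrow H^1(G_k, A[m]) \longrightarrow H^1(G_k, A)[m] \longrightarrow 0.$$
One then checks that the image of $A(k)/mA(k)$ consists of classes unramified outside the finite set $S$ of places of $k$ comprising the archimedean places, the places above $m$, and the places of bad reduction of $A$. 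Since $A[m] \cong (\Z/m)^{2\dim A}$ is now a trivial $G_k$-module, such classes correspond to homomorphisms $G_k \to A[m]$ cutting out abelian extensions of exponent dividing $m$ and unramified outside $S$, of which there are only finitely many by the finiteness of the class group together with the Dirichlet unit theorem (equivalently, by Hermite-Minkowski). This bounds $A(k)/mA(k)$ and proves the weak theorem.

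Next I would develop the height machinery. Choosing a symmetric ample line bundle $L$ on $A$, the associated Weil height gives a canonical Néron-Tate height $\hat{h}\colon A(k) \to \R_{\geq 0}$, whose existence and quadraticity rest on the theorem of the cube; in particular $\hat{h}(x+y)+\hat{h}(x-y) = 2\hat{h}(x)+2\hat{h}(y)$ and $\hat{h}(mx) = m^2 \hat{h}(x)$. The two inputs I need are, first, that $\hat{h}$ differs from a Weil height by $O(1)$, so that for every constant $C$ the set $\{x \in A(k) : \hat{h}(x) \leq C\}$ is finite (finiteness of points of bounded height over a number field), and second, the exact quadratic scaling under $[m]$.

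Finally comes the descent. Let $Q_1, \dots, Q_n$ be coset representatives for the finite group $A(k)/mA(k)$. Given any $P \in A(k)$, write $P = mP_1 + Q_{i_1}$, then $P_1 = mP_2 + Q_{i_2}$, and so on. The quadratic scaling forces an inequality of the shape $\hat{h}(P_{j+1}) \leq \tfrac{1}{m^2}\hat{h}(P_j) + C'$ for a constant $C'$ independent of $P$, so the heights $\hat{h}(P_j)$ eventually drop below a fixed bound $B$. Hence every $P$ lies in the subgroup generated by the $Q_i$ together with the finite set $\{x : \hat{h}(x) \leq B\}$, and $A(k)$ is finitely generated. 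I expect the main obstacle to be the weak Mordell-Weil step, since it is there that the arithmetic of the number field $k$ enters essentially, through the finiteness theorems for class groups and units; the height theory and the descent, while requiring the theorem of the cube, are then comparatively formal.
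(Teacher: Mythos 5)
Your outline is the standard and correct proof of the Mordell--Weil theorem (weak Mordell--Weil via the Kummer sequence and the finiteness of class groups and unit groups, followed by descent with the N\'eron--Tate height), and all the key inputs are correctly identified. The paper itself states this theorem only as classical background and gives no proof, so there is nothing in the paper to compare your argument against; it is consistent with the standard references the paper implicitly relies on.
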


\begin{remark}\label{simplicity}
If $A$ is a simple abelian variety defined over a number field $k$, then $A(k)$ is Zariski dense in $A$ if and only if $r_k\geq 1$.
\end{remark}

\begin{theorem}(Lang-N\'eron)\label{Lang-N}
Let $k$ be a field. If $K/k$ is a finitely generated regular extension and $A$ is an abelian variety over $K$, then $A(K)/\mathrm{Tr}_{K/k}(A)(k)$ is a finitely generated group.
\end{theorem}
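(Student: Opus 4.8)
The plan is to mirror the descent proof of the Mordell--Weil theorem (Theorem~\ref{Mordell-Weil}), with the $K/k$-trace playing the role of the obstruction that forces one to pass to a quotient. Write $(\tau, B)$ for the $K/k$-trace, so that $B = \mathrm{Tr}_{K/k}(A)$ is an abelian variety over $k$ equipped with its canonical homomorphism $\tau \colon B_K \to A$, and set
$$\bar A(K) := A(K)/\tau\big(B(k)\big).$$
The goal is to show that $\bar A(K)$ is a finitely generated abelian group. Note that $B(k)$ itself is typically \emph{not} finitely generated (for instance, it is divisible when $k$ is algebraically closed), which is precisely why one must quotient by the trace rather than hope for $A(K)$ to be finitely generated.

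First I would reduce to a geometric situation. Since $K/k$ is finitely generated and regular, $K = k(V)$ for a geometrically integral $k$-variety $V$, which in characteristic zero I may take smooth and projective; spreading out, $A$ extends to an abelian scheme $\mathcal A \to U$ over a dense open $U \subseteq V$. Two standard devices then simplify the problem. By a descent argument, and using that a subgroup of a finitely generated abelian group is again finitely generated, one reduces to $k$ algebraically closed by controlling the kernel and image of $\bar A(K) \to \bar A(K\kbar)$. And by an induction on $\dim V$ one reduces to $\dim V = 1$, i.e.\ $K = \kbar(C)$ for a smooth projective curve $C$. Over an algebraically closed $k$ the trace points are divisible, so $\tau(B(k)) \subseteq nA(K)$ and the quotients $A(K)/nA(K)$ and $\bar A(K)/n\bar A(K)$ coincide.

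The descent then rests on two inputs over $K = \kbar(C)$. The first is \emph{weak Mordell--Weil}: the group $A(K)/nA(K)$ is finite for a fixed $n \ge 2$. This comes from the Kummer sequence
$$0 \longrightarrow A(K)/nA(K) \longrightarrow H^1(K, A[n]) \longrightarrow H^1(K, A)[n] \longrightarrow 0,$$
after bounding $A(K)/nA(K)$ by a Selmer subgroup of $H^1(K, A[n])$ cut out by unramifiedness away from the bad locus of $\mathcal A$; its finiteness follows from the topological finite generation of the (tame) fundamental group of an open curve over $\kbar$. The second input is a height machine on the function field: choosing a projective model $\pi \colon \mathcal X \to C$ with generic fiber $A$ and a symmetric relatively ample line bundle $\mathcal L$, one sets $h(P) = \deg_C (s_P^* \mathcal L)$ for the section $s_P \colon C \to \mathcal X$ attached to $P \in A(K)$, checks that $h$ is quadratic up to a bounded error, and forms the canonical height $\hat h$, a nonnegative quadratic form on $A(K)$. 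The essential finiteness is a Northcott property \emph{modulo the trace}: for every real $C$ the set $\{\,\bar P \in \bar A(K) : \hat h(\bar P) \le C\,\}$ is finite, the null vectors of $\hat h$ being accounted for exactly by torsion and the image of $\tau$.

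Granting these, the conclusion is the classical infinite descent: choosing representatives $g_1, \dots, g_m$ of the finitely many classes in $\bar A(K)/n\bar A(K)$ and letting $C$ bound their canonical heights, the relation $\hat h(n x) = n^2 \hat h(x)$ lets one reduce any class, by repeatedly subtracting multiples of $n$, to the finite set of classes of height $\le C$; hence the $g_i$ together with that finite set generate $\bar A(K)$. The main obstacle is the Northcott property modulo the trace, which is the genuinely new phenomenon beyond the number-field Mordell--Weil theorem: over a function field the constant (trace) points have bounded height and account for all the failure of finiteness, so the heart of the matter is to show that once the Chow trace is removed, bounded height again forces finiteness. Concretely this is where the theorem of the base enters --- the classes of bounded height have bounded image in the finitely generated Néron--Severi group $\Num(\mathcal X)$ --- and it is the step requiring the most care.
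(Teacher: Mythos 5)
The paper does not actually prove this statement: it defers entirely to \cite{LaNe59} and to Conrad's exposition \cite{Con06}, recording only what a regular extension is. Your outline is, in its architecture, the strategy of those references --- reduce to an algebraically closed constant field, build a height machine from a projective model, prove weak Mordell--Weil via unramified cohomology and the topological finite generation of the fundamental group, and run the classical descent --- so there is no divergence of method to report, only a difference between a citation and a sketch.

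Judged as a proof, however, the proposal has a genuine gap at exactly the point you flag: the ``Northcott property modulo the trace,'' i.e.\ that the set of classes in $A(K)/\tau(B(k))$ of bounded canonical height is finite, equivalently that the null space of $\hat h$ is accounted for precisely by torsion and the image of the trace. This is not a verification one can defer; it is the entire content of the theorem beyond the number-field case. Weak Mordell--Weil and the descent formalism by themselves give nothing here, since over a function field the constant points all have bounded height, and without identifying that bounded locus with the trace one cannot exclude that $A(K)/\tau(B(k))$ contains a divisible group of infinite rank. Establishing it requires showing that points of bounded height vary in a bounded family whose connected part is exhausted by the $K/k$-trace --- this is where the theorem of the base interacts with the construction of the trace, and it occupies the bulk of \cite{Con06}; naming the theorem of the base is not the same as carrying out this step. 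A second, smaller issue: the reduction to $\trdeg(K/k)=1$ by induction is not how Lang--N\'eron or Conrad proceed (they work directly with a normal projective model of arbitrary dimension, defining heights by intersection degrees), and if you insist on inducting you must prove a transitivity statement for traces along a tower $k \subset k_1 \subset K$ and check that finite generation of the two graded pieces implies finite generation of $A(K)/\mathrm{Tr}_{K/k}(A)(k)$; neither is automatic, since $\mathrm{Tr}_{K/k_1}(A)(k_1)$ is itself generally not finitely generated over the intermediate field.
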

\begin{proof}
The proof is given in \cite{LaNe59}. A modern exposition of the proof is given in \cite{Con06}. A regular extension is a separable extension $K/k$ with $k$ algebraically closed in $K$.
\end{proof}

\begin{remark}\label{simplicity function fields}
If $k$ is a number field, then $\mathrm{Tr}_{K/k}(A)(k)$ is finitely generated by the Mordell-Weil Theorem \ref{Mordell-Weil}, so $A(K)$ will also be finitely generated by applying Lang-N\'eron over the algebraic closure $\bar{k}$ of $k$ viewed as constant field in $\bar{k}K$, and by remarking that $\mathrm{Tr}_{\bar{k}K/\bar{k}}(A)(\bar{k})=\mathrm{Tr}_{K/k}(A)(\bar{k})$ and that $A(K)/\mathrm{Tr}_{K/k}(A)(k)\to A(\bar{k}K)/\mathrm{Tr}_{K/k}(A)(\bar{k})$ is injective. The details are given in \cite{Con06}. If in turn $A/K$ is simple, then $A(K)$ is Zariski dense in $A$ if and only if the rank of $A$ over $K$ is positive, as in Remark \ref{simplicity}.
\end{remark}

\subsection{Flops}

We recall results in \cite{Kaw88} and \cite{Kol89} stating that
every pair of birational 3-dimensional minimal models can be connected by birational operations so-called {\bf flops}.
Let $f \colon X\rightarrow Y$ be a morphism of projective varieties.
We denote the space of $1$-cycles modulo numerical equivalence by $N_1(X)$.
Let $N_1(X/Y)$ be the space generated by curves contracted by $f$.
\begin{defn}
Let $X$ be a normal projective variety.
An extremal flopping contraction is a birational morphism $f \colon  X \rightarrow Y$ to a normal projective variety
such that
\begin{itemize}
\item $f$ is small, i.e., the exceptional locus $\mathrm{Ex}(f)$ has codimension $\geq 2$;
\item $K_X$ is $f$-trivial;
\item $N_1(X/Y)$ is one dimensional.
\end{itemize}
Let $D$ be a $\mathbb Q$-Cartier divisor such that $-D$ is $f$-ample.
Then the $D$-flop is an extremal flopping contraction $f^+ \colon X^+ \rightarrow Y$
such that the strict transform $D^+$ of $D$ is $f$-ample.
The $D$-flop is unique and does not depend on the choice of $D$.
The $D$-flop exists when $X$ has only terminal singularities.
This is a theorem in dimension $3$ by Kawamata \cite{Kaw88} and in higher dimension when $D$ is big by \cite{BCHM}.
\end{defn}

\begin{lemma}{\cite[Lemma 4.3, Proposition 4.6, and Corollary 4.11]{Kol89}}
\label{lemma:flops}
Let $X$ and $X'$ be projective threefolds with $\mathbb Q$-factorial terminal singularities 
such that $K_X$ and $K_{X'}$ are nef.
Suppose that we have a birational map $f \colon X \dashrightarrow X'$.
Then $f$ is an isomorphism in codimension $1$.
Moreover, the indeterminacy of $f$ is a union of rational curves,
and $X$ and $X'$ have the same analytic singularities.
\end{lemma}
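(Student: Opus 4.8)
The plan is to treat the three conclusions separately, proving the codimension-one statement by a direct discrepancy comparison and then deducing the two finer geometric claims from the structure theory of flops recalled above.

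First I would show that $f$ is an isomorphism in codimension $1$. Choose a common resolution $p \colon W \to X$, $q \colon W \to X'$ resolving the indeterminacy of $f$, and write
\[
K_W = p^* K_X + \sum_i a(E_i; X)\, E_i, \qquad K_W = q^* K_{X'} + \sum_i a(E_i; X')\, E_i,
\]
where $E_i$ ranges over the prime divisors on $W$ and $a(E_i;-)$ denotes the discrepancy. Put $\Gamma = p^* K_X - q^* K_{X'}$. Since $K_{X'}$ is nef, $q^* K_{X'}$ is nef and in particular $p$-nef, while $p^* K_X$ is $p$-trivial; hence $-\Gamma$ is $p$-nef. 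As the canonical class is a birational invariant and $f$ is an isomorphism off a closed set of codimension $\geq 2$, we get $p_* \Gamma = K_X - (f^{-1})_* K_{X'} = 0$, so the negativity lemma yields $\Gamma \geq 0$. The symmetric argument, exchanging the roles of $X,X'$ and of $p,q$, shows $-\Gamma$ is effective, whence $\Gamma = 0$ and $a(E_i;X) = a(E_i;X')$ for all $i$. If $f$ contracted some prime divisor $D \subset X$, then the associated valuation would satisfy $a(D;X)=0$ (it is a divisor on $X$) but $a(D;X') > 0$ (its center on $X'$ has codimension $\geq 2$ and $X'$ is terminal), contradicting the equality of discrepancies. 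Thus $f$, and by symmetry $f^{-1}$, contracts no divisor, so $f$ is an isomorphism in codimension $1$.

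Granting this, I would invoke the theorem of Kawamata and Kollár recalled above to factor $f$ as a finite chain of flops. Each flop is an isomorphism outside its flopping locus, so the indeterminacy locus of $f$ lies in the union of the finitely many flopped curves. These curves form the exceptional locus of an extremal flopping contraction, i.e.\ of a small $K$-trivial contraction with $N_1(X/Y)$ one-dimensional; the contracted extremal ray is generated by a rational curve and, the fibres being at most one-dimensional, the exceptional locus is a union of rational curves. This gives the stated description of the indeterminacy.

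The last assertion, that $X$ and $X'$ share the same analytic singularities, I would again establish flop by flop: a three-dimensional flop is an analytic isomorphism on a punctured neighborhood of the flopping curves, and the local classification of terminal three-fold flops shows that the analytic types of the singularities on $X$ and on the flopped variety $X^+$ coincide. This final step is the main obstacle: the codimension-one statement is essentially formal once the negativity lemma is available, but matching analytic singularity types across a flop relies on the explicit local theory of three-fold terminal singularities and their flops developed in \cite[Proposition~4.6, Corollary~4.11]{Kol89}, which is the genuinely deep ingredient and the reason we quote the result rather than reprove it.
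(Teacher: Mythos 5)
The paper does not actually prove this lemma; it is quoted directly from \cite[Lemma 4.3, Proposition 4.6, and Corollary 4.11]{Kol89}, so there is no in-paper argument to compare against step by step. Your reconstruction follows the standard route (discrepancy comparison on a common resolution for the codimension-one statement, then factorization into flops for the two finer claims), and your second and third paragraphs are fine as sketches that correctly identify the local analytic theory of terminal threefold flops as the genuinely deep ingredient to be deferred to Koll\'ar.

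There is, however, one genuine circularity in your first paragraph. You justify $p_*\Gamma = 0$ by asserting that ``$f$ is an isomorphism off a closed set of codimension $\geq 2$'' --- but that is precisely the statement you are in the middle of proving, so as written the argument assumes its own conclusion. Indeed, $p_*\Gamma = K_X - p_*q^*K_{X'} = \sum_D a(D;X')\,D$, where the sum runs over the prime divisors $D \subset X$ contracted by $f$ (divisors of $X$ not contracted by $f$ contribute $a(D;X') - a(D;X) = 0 - 0 = 0$), and this vanishes exactly when there are no contracted divisors. The repair is standard: the negativity lemma does not require $p_*\Gamma = 0$, only that $p_*\Gamma$ be effective, and effectivity is immediate because each surviving coefficient is a discrepancy $a(D;X') > 0$ of the terminal variety $X'$. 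With that substitution the negativity lemma gives $\Gamma \geq 0$, the symmetric argument applied to $q$ gives $\Gamma \leq 0$, and the rest of your paragraph (equality of discrepancies rules out a contracted divisor $D$, since such a $D$ would satisfy $a(D;X) = 0 < a(D;X')$) goes through unchanged.
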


Let $X$ be an abelian Calabi-Yau threefold with an abelian surface fibration 
$\pi \colon X \rightarrow S$, where $S$ is a smooth projective curve.
Pick a section $P \in X(S)$. Then translation by $P$ defines a birational map
$f_P \colon  X \dashrightarrow X$.
It follows from Theorem~\ref{theorem: Neronmodel} that the birational map $f$ is well-defined
on the $\pi$-smooth locus $\mathrm{Sm}(X)$.
In particular, the indeterminacies of $f$ are in fibers of $f$. 
This leads to the following theorem.

\begin{theorem}{\cite[Theorem 4.9]{Kol89}}
Suppose that $X$ is an abelian Calabi-Yau threefold
with an abelian surface fibration $\pi \colon X \rightarrow S$.
Fix a section $P \in X(S)$. Consider the translation $f_P$ by $P$.
Let $D'$ be an ample divisor on $X$ and $D = (f_P^{-1})_*(D')$ the strict transform of $D$.

By running the $K_X + \epsilon D$-minimal model program for sufficiently small $\epsilon > 0$, we can decompose $f_P$ into a finite sequence of $D$-flops.
Moreover this MMP is a relative MMP with respect to the fibration $\pi \colon X \rightarrow S$.
\end{theorem}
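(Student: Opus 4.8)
The plan is to set up the log pair $(X, \epsilon D)$ and run a relative $(K_X + \epsilon D)$-minimal model program over $S$, showing that the condition $K_X \equiv 0$ forces every elementary step to be a $D$-flop, and that the canonical model of the pair is exactly the target copy of $X$, so that the program reconstructs $f_P$. First I would record the geometry. Since $X$ is Calabi--Yau, $K_X \equiv 0$, and translation by the section $P$ is fiberwise, so $\pi \circ f_P = \pi$ wherever $f_P$ is defined; in particular $f_P$ lives over $S$. Applying Lemma~\ref{lemma:flops} to the two copies of $X$ (both $\mathbb Q$-factorial terminal with $K_X$ nef) shows that $f_P$ is an isomorphism in codimension $1$, and, as noted before the statement, its indeterminacy locus is a union of rational curves contained in fibers of $\pi$. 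Hence $D = (f_P^{-1})_* D'$ is a genuine $\mathbb Q$-Cartier divisor on $X$; it is big, and since $D'$ is ample and $f_P$ is an isomorphism in codimension $1$, the system $|mD|$ has no fixed components, so $D$ is movable. For $\epsilon > 0$ small the pair $(X, \epsilon D)$ is terminal.

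Next I would run the $(K_X + \epsilon D)$-MMP. Because $K_X \equiv 0$, at every stage $K_X + \epsilon D \equiv \epsilon D$, so each $(K_X + \epsilon D)$-negative extremal ray $R$ satisfies $K_X \cdot R = 0$ and $D \cdot R < 0$. The type of contraction is then pinned down: a divisorial or fiber-type contraction of $R$ would produce a covering family of curves in the class of $R$, forcing $D \cdot R \geq 0$ by movability of $D$, a contradiction. Thus only small $K_X$-trivial contractions occur, no Mori fiber space appears, and the fibration $\pi$ is never destroyed. The flip of $(X, \epsilon D)$ attached to such a contraction is, since $K_X \equiv 0$, precisely a $D$-flop; and because the contracted curves lie in fibers of $\pi$, each flop is carried out over $S$, so the whole program is a relative MMP over $S$. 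Termination in dimension three (\cite{Kaw88}, \cite{Kol89}) then gives, after finitely many $D$-flops, a model on which $K + \epsilon D$ is nef.

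Finally I would identify the endpoint with the target. Since $f_P$ is an isomorphism in codimension $1$ and $D = (f_P^{-1})_* D'$, pullback of reflexive sections gives an isomorphism of section rings $R(X, D) \cong R(X, D')$; as $D'$ is ample, $\mathrm{Proj}\, R(X, D') $ is the target copy of $X$, so $(X, D')$ is the canonical (ample) model of the pair $(X, \epsilon D)$. Being $\mathbb Q$-factorial terminal with $K + \epsilon D'$ ample, the target is simultaneously a minimal model, so the minimal model produced by the program coincides with it and the resulting chain of $D$-flops equals $f_P$ (the two maps agree on the generic fiber, where both are translation by $P$, hence everywhere). I expect the main obstacle to be exactly this last identification: one must rule out that the abstract MMP terminates at some other $K_X$-trivial modification, which is where the uniqueness of the canonical model together with the negativity lemma is essential; the second delicate point is the contraction analysis ensuring, via movability of $D$, that no divisorial or fiber-type step ever intervenes so that the program stays within isomorphisms in codimension $1$.
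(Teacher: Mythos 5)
The paper does not prove this statement at all---it is quoted verbatim from \cite[Theorem 4.9]{Kol89} and used as a black box---so there is no in-paper proof to compare against; your argument is essentially Koll\'ar's original one (take the strict transform of an ample divisor from the target, run the $(K_X+\epsilon D)$-MMP, use movability of $D$ to exclude divisorial and fibre-type contractions, use $K_X\equiv 0$ to see that each flip is a $D$-flop, invoke termination of threefold flops, and identify the output with the target via uniqueness of the ample model), and it is correct in its essentials. The only point I would ask you to tighten is the relativity over $S$ at \emph{every} stage: you justify that the $D$-negative curves on $X=X_0$ lie in fibres of $\pi$ because they sit in the indeterminacy locus of $f_P$, but you should note that the same holds on each intermediate model $X_i$ because the $D_i$-negative curves lie in the indeterminacy locus of the induced map $X_i\dashrightarrow X$ (still an isomorphism in codimension one over $S$, regular on the preimage of the N\'eron model), which is contained in finitely many fibres.
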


\begin{cor}{\cite[Theorem 1.2]{Ogu14}}
\label{cor: oguiso}
Let $X$ be an abelian Calabi-Yau threefold
with an abelian surface fibration $f \colon X \rightarrow S$.
Suppose that the Picard rank of $X$ is two.
Then the Mordell-Weil group $\mathrm{MW}(A_\eta) = X(S)$ is a finite group
where $A_\eta$ denotes the generic fiber.
\end{cor}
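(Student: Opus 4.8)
The plan is to analyze the linear representation of the Mordell--Weil group on $\mathrm{NS}(X)_{\Q}$ coming from fiberwise translation, use $\rho(X)=2$ to force this representation to be unipotent, and then rule out a nontrivial unipotent part by a rigidity argument for automorphisms of a two-dimensional cone. First I would observe that each section $P\in X(S)$ yields a pseudo-automorphism: by the theorem of Kollár quoted just above, the translation $f_P$ decomposes into a finite sequence of $D$-flops, and since $X$ is smooth (hence $\Q$-factorial and terminal) with $K_X\equiv 0$ nef, Lemma~\ref{lemma:flops} shows that $f_P\colon X\dashrightarrow X$ is an isomorphism in codimension one. Hence $f_P^{*}$ acts as a linear automorphism of $\mathrm{NS}(X)$, and $P\mapsto f_P^{*}$ defines a homomorphism $X(S)\to \GL(\mathrm{NS}(X))\cong \GL_2(\Z)$; note also that $P\mapsto f_P$ is injective, since $f_P$ restricts to $t_{P_\eta}$ on the generic fiber and hence $f_P(O_\eta)=P_\eta$.

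Next I would pin down the representation. As $f_P$ preserves $f$ and acts trivially on the base, it fixes the fiber class $F$. Restriction to the generic fiber gives $r\colon \mathrm{NS}(X)_{\Q}\to \mathrm{NS}(A_\eta)_{\Q}$ with $F\in\ker r$, while an ample class $H$ on $X$ satisfies $r(H)=h\neq 0$; since $\rho(X)=2$ this forces $\ker r=\Q F$ and $\operatorname{im} r=\Q h$. Because $f_P$ restricts to $t_{P_\eta}$, which acts trivially on $\mathrm{NS}(A_\eta)$, we get $r(f_P^{*}H)=h$, so $f_P^{*}H=H+n_P F$ for some $n_P\in\Q$. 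In the basis $(F,H)$ the matrix of $f_P^{*}$ is therefore unipotent, $\left(\begin{smallmatrix}1 & n_P\\ 0 & 1\end{smallmatrix}\right)$, and $P\mapsto n_P$ is a homomorphism $X(S)\to\Q$.

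The crux, which I expect to be the main obstacle, is to prove $n_P=0$ for all $P$. Here I would invoke the standard fact that a pseudo-automorphism preserves the movable cone, so $f_P^{*}$ is a linear automorphism of the closed pointed cone $M=\overline{\mathrm{Mov}}(X)\subset \mathrm{NS}(X)_{\R}\cong\R^2$. As $F$ (semiample) and $H$ (ample) are independent movable classes, $M$ is genuinely two-dimensional and bounded by exactly two rays, which $f_P^{*}$ must permute. If it fixes both rays it has two independent eigenvectors, hence is diagonalizable, which for a unipotent matrix forces $n_P=0$; if it swaps them, then $(f_P^{*})^2$ fixes both and the same argument yields $2n_P=0$. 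Either way $n_P=0$, so $f_P^{*}=\id$ on $\mathrm{NS}(X)$ for every $P$. This is exactly where the hypothesis $\rho(X)=2$ is indispensable, since in higher Picard rank a parabolic transformation can perfectly well preserve the movable cone.

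Finally I would deduce finiteness. A pseudo-automorphism acting trivially on $\mathrm{NS}(X)$ fixes an ample class, so it admits no flopping curves and is, by a standard rigidity lemma, biregular; thus $P\mapsto f_P$ embeds $X(S)$ into $\ker\bigl(\Aut(X)\to\GL(\mathrm{NS}(X))\bigr)$. This kernel is contained in the automorphism group of the polarized pair $(X,H)$, a group scheme of finite type whose identity component lies in $\Aut^0(X)$; since $H^0(X,T_X)\cong H^0(X,\Omega_X^2)=H^2(\mathcal{O}_X)=0$ for a Calabi--Yau threefold, $\Aut^0(X)$ is trivial and the kernel is finite. Therefore $X(S)=\mathrm{MW}(A_\eta)$ is finite.
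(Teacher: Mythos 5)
Your proof is correct, but it takes a genuinely different route from the paper's, which disposes of the corollary in two sentences: since $\rho(X)=2$, no flop can keep the strict transform of a general fiber $A_s$ nef, so every translation $f_P$ is already biregular and $\mathrm{MW}(A_\eta)\subset \Aut(X)$; finiteness is then outsourced entirely to Oguiso's theorem that $\Aut(X)$ is finite for a Calabi--Yau threefold of Picard number two with such a fibration. You instead make the action on $\mathrm{NS}(X)$ completely explicit --- $f_P^*$ is unipotent, fixing $F$ and sending $H\mapsto H+n_PF$ --- kill the unipotent part by the rank-two movable-cone rigidity, and then replace the appeal to Oguiso by the elementary fact that automorphisms acting trivially on $\mathrm{NS}(X)$ form a finite group because $H^0(X,T_X)\cong H^0(X,\Omega^2_X)=0$. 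This is self-contained, uses a strictly weaker finiteness input than the paper does, and proves slightly more (the Mordell--Weil group acts trivially on $\mathrm{NS}(X)$, not merely finitely); the paper's version is shorter and, via the citation, records the stronger fact that all of $\Aut(X)$ is finite. One small remark: your movable-cone step can be shortcut. Since $P\mapsto n_P$ is a homomorphism, for each $P$ one of $n_P$, $n_{-P}$ is $\geq 0$, and then $(f_{\pm P})_*H=H+\lvert n_P\rvert F$ is ample (ample plus nef), which already forces $f_{\pm P}$, hence $f_P$, to be biregular; preservation of the two-dimensional nef cone then gives $n_P=0$ directly, without invoking preservation of the movable cone by pseudo-automorphisms. (Also, the reference to Koll\'ar's flop decomposition in your first step is superfluous: the isomorphism in codimension one is exactly Lemma~\ref{lemma:flops} and needs nothing more.)
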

\begin{proof}
Since the Picard rank is two, there is no flop $\sigma \colon X \dashrightarrow X'$ such that the strict transform of a general fiber $A_s$ is nef on $X'$ as well.
Thus, we have $\mathrm{MW}(A_\eta) \subset \mathrm{Aut}(X)$, where $\mathrm{Aut}(X)$ is the group of automorphisms of $X$. It follows from \cite[Theorem 1.2, case (1)]{Ogu14} that $\mathrm{Aut}(X)$ is finite.
\end{proof}

\section{N\'eron models of abelian Calabi-Yau varieties}\label{Neron section}
In this section, we let $k$ denote a field of characteristic $0$. We consider a flat morphism 
$$ f \colon X \to S $$ 
of smooth projective geometrically connected varieties over $k$. We moreover assume that $S$ is a curve, that the generic fiber of $f$ is an abelian variety over the function field of $S$, and that the canonical sheaf $\omega_X$ of $X$ is trivial. When these assumptions are met, we shall prove that the smooth locus of $f$ forms the \emph{N\'eron model} of its generic fiber.

We shall start by discussing some preliminary material concerning N\'eron models and Kulikov models. 
\subsection{N\'eron models} 
Let $S$ denote a connected Dedekind scheme with function field $K$, and let $A$ be an abelian variety over $K$.

\begin{defn}
\label{defn: neron}
The N\'eron model of $A$ is an $S$-group scheme $\mathcal{A}$ which is smooth, separated and of finite type such that the following universal property is true:
 \begin{center}
 For each smooth $S$-scheme $Y$ and each $K$-morphism $u_K : Y_K \rightarrow A$,
 there is a unique $S$-morphism $u: Y \rightarrow \mathcal{A}$ extending $u_K$.
 \end{center}
 This property is called {\bf the N\'eron mapping property}.
\end{defn}

For abelian varieties, the N\'eron model always exists (\cite[Theorem 1.4/3]{neron}). It is immediate from the N\'eron mapping property that $\mathcal{A}$ is unique up to canonical isomorphism, and that it carries a unique $S$-group scheme structure extending the group structure of $A$. Also, we have a canonical bijection $\mathcal{A}(S) = A(K)$.

\subsubsection{N\'eron models and descent}\label{subsubsec-descent}
Following \cite[Definition 3.6/1]{neron}, we say that a faithfully flat extension $ R \subset R' $ of discrete valuation rings has \emph{ramification index $1$}, if a uniformizer $\pi$ of $R$ induces a uniformizer of $ R' $, and if moreover the residue field extension is separable. The prime example to keep in mind, and the only case we shall use, is when $R'$ is the $\mathfrak{m}_R$-adic completion of $R$.

By \cite[Theorem 7.2/1]{neron}, the formation of N\'eron models descends along extensions of ramification index $1$. More precisely, let $A$ be an abelian variety defined over the fraction field $K$ of $R$. Let $A'$ denote the pullback of $A$ to the fraction field $K'$ of $R'$, and assume that $A'$ admits a N\'eron model $\mathcal{A}'$ over $R'$. Then $A$ admits a N\'eron model $\mathcal{A}$ over $R$, and, moreover, the natural basechange morphism
$$ \mathcal{A} \times_R R' \to \mathcal{A}' $$
extending the canonical isomorphism on the generic fibers is an isomorphism. Moreover, by faithful flat descent (cf.~e.g.~\cite[Proposition 6.2/D.4]{neron}), $ \mathcal{A} $ is unique up to canonical isomorphism.

\subsection{Kulikov models}\label{sec-Kulikov}
In this paragraph, we recall some terminology and a result from \cite[Section 6]{HaNi} which we will use to study N\'eron models for abelian Calabi-Yau varieties. 
We denote by $T$ a complete discrete valuation ring of equal characteristic zero, with quotient field $L$.

\begin{defn}\label{def-Kulikov}
Let $Y$ be a smooth projective geometrically connected $L$-variety, and assume that $\omega_{Y} = \mathcal{O}_Y $. A \emph{Kulikov model} of $Y$ is a regular, flat and proper $T$-model $\mathcal{Y}/T$ whose relative dualizing sheaf is trivial.
\end{defn}

In the context of abelian varieties, Kulikov models have the pleasant property that they form minimal compactifications of N\'eron models. The key result we shall use is \cite[Cor.~6.7]{HaNi}, which is stated as Proposition \ref{prop-Kulikov} below. 

\begin{prop}\label{prop-Kulikov}
If $A$ is an abelian $L$-variety, and $\mathcal{Y}/T$ is a Kulikov model of $A$, then the $T$-smooth locus $\mathrm{Sm}(\mathcal{Y}) $ of $\mathcal{Y}$ is a N\'eron model of $A$. 
\end{prop}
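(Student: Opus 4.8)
The plan is to verify the defining universal property of the N\'eron model by comparing $\mathrm{Sm}(\mathcal{Y})$ directly with the N\'eron model $\mathcal{N}$ of $A$, whose existence is guaranteed by \cite[Theorem 1.4/3]{neron}. First I would record the elementary structural facts: since $\mathcal{Y}$ is proper over $T$, its open subscheme $\mathrm{Sm}(\mathcal{Y})$ is smooth, separated and of finite type over $T$, with generic fiber $A$.

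The first substantive step is the \emph{weak} N\'eron property: for every extension $T \to T'$ of ramification index one, with fraction field $L'$ (in particular for the strict henselization $T^{\mathrm{sh}}$), the map $\mathrm{Sm}(\mathcal{Y})(T') \to A(L')$ is bijective. Injectivity is the valuative criterion of separatedness. For surjectivity, given a point of $A(L')$, I would extend it to a section of $\mathcal{Y}_{T'}$ by the valuative criterion of properness and then argue that this section lands in the smooth locus. This is where characteristic zero is essential: at the image $y$ of the closed point, the section identifies the local ring with a regular quotient, forcing a uniformizer of $T$ to lie outside the square of the maximal ideal; hence the special fiber is regular at $y$, and since the residue field is perfect (being of characteristic zero) regular equals smooth there. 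Note that this step uses nothing beyond regularity of $\mathcal{Y}$ and therefore yields only a weak N\'eron model: blowing up a smooth model along a point of its special fiber shows that the smooth locus of an arbitrary regular proper model can be strictly larger than $\mathcal{N}$. The triviality of $\omega_{\mathcal{Y}/T}$ is precisely what will exclude such extraneous components.

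By the N\'eron mapping property of $\mathcal{N}$ applied to the smooth $T$-scheme $\mathrm{Sm}(\mathcal{Y})$ and the identity on generic fibers, there is a canonical $T$-morphism $u \colon \mathrm{Sm}(\mathcal{Y}) \to \mathcal{N}$ restricting to the identity on $A$. The heart of the argument, and the step I expect to be the main obstacle, is to show that $u$ is \'etale; this is exactly where the Kulikov hypothesis enters. Trivializing $\omega_{\mathcal{Y}/T} \cong \mathcal{O}_{\mathcal{Y}}$ by a generator $\tilde\omega$ normalized so that $\tilde\omega|_A$ is the invariant differential $\omega_A$, one gets a nowhere-vanishing generator $\tilde\omega|_{\mathrm{Sm}(\mathcal{Y})}$ of $\omega_{\mathrm{Sm}(\mathcal{Y})/T} = \det \Omega_{\mathrm{Sm}(\mathcal{Y})/T}$. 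Pulling back the invariant differential of $\mathcal{N}$ (which also extends $\omega_A$) along the codifferential of $u$ produces a section of this same line bundle agreeing with $\tilde\omega|_{\mathrm{Sm}(\mathcal{Y})}$ on the schematically dense generic fiber; since $\mathrm{Sm}(\mathcal{Y})$ is reduced, the two agree globally. Thus the codifferential carries a section to a generator, hence is surjective and so an isomorphism of line bundles: $\det(du)$ is a unit and $u$ is \'etale. (In the blow-up example above $\omega_{\mathcal{Y}/T}$ is not trivial and the extended differential vanishes along the exceptional component, which is exactly where $u$ fails to be \'etale.)

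It remains to upgrade ``\'etale'' to ``isomorphism''. Both $\mathrm{Sm}(\mathcal{Y})$ and $\mathcal{N}$ are integral (each is flat over $T$ with irreducible generic fiber $A$), and $u$ is birational and separated. As $u$ is unramified and separated, the diagonal is open and closed in $\mathrm{Sm}(\mathcal{Y}) \times_{\mathcal{N}} \mathrm{Sm}(\mathcal{Y})$; its open-and-closed complement has empty generic fiber, yet the fiber product is smooth, hence flat, over $T$, so each of its components dominates $T$, forcing that complement to be empty. Therefore $u$ is a monomorphism, and a flat monomorphism of finite presentation is an open immersion. Finally $u$ is surjective: every point of $\mathcal{N}_s$ over the algebraic closure of the residue field lifts to a $T^{\mathrm{sh}}$-section by smoothness and henselianity, and such sections factor through $\mathrm{Sm}(\mathcal{Y})$ by the weak N\'eron property. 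A surjective open immersion is an isomorphism, so $\mathrm{Sm}(\mathcal{Y}) \cong \mathcal{N}$ and the N\'eron mapping property follows.
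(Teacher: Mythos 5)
Your proof follows the same route as the paper's own (which is only a sketch that defers the crucial step to \cite{HaNi}): compare $\mathrm{Sm}(\mathcal{Y})$ with the N\'eron model via the canonical map, use regularity to get the weak N\'eron property and hence surjectivity, use the Kulikov condition to upgrade the map to an open immersion, and conclude since a surjective open immersion is an isomorphism; your contribution is to flesh out the open-immersion step that the paper outsources. The one point to tighten in that added detail is the normalization of $\tilde\omega$: a generator of $\omega_{\mathcal{Y}/T}$ can only be rescaled by a unit of $T$, so $\tilde\omega|_A$ and the restriction of the N\'eron differential may a priori differ by a constant $c\in L^{*}$ of nonzero valuation (in which case the codifferential would be everywhere degenerate, or not defined, on the special fibre rather than an isomorphism); this possibility is excluded by combining the surjectivity you already established with generic smoothness in residue characteristic zero, but the exclusion needs to be stated rather than absorbed into the word ``normalized''.
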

\begin{proof}
For the convenience of the reader, we include a brief sketch of the proof. Let $\mathcal{A}$ denote the N\'eron model of $A$. By the mapping property, there exists a unique morphism 
$$ h \colon \mathrm{Sm}(\mathcal{Y}) \to \mathcal{A} $$ 
extending the canonical isomorphism on the generic fibers. Since $\mathcal{Y}$ is regular, it is a standard fact that the $T$-smooth locus $\mathrm{Sm}(\mathcal{Y})$ forms a \emph{weak} N\'eron model of $A$. This means that $\mathrm{Sm}(\mathcal{Y})$ has the mapping property for \'etale points, and knowing this, it is straightforward to deduce that the morphism $h$ is surjective. 

The assumption that $\mathcal{Y}$ is Kulikov ensures that $h$ is in fact an open immersion. This is somewhat involved, and we refer to \cite[Section 6]{HaNi} for details. Now $h$ is a surjective open immersion, thus it is an isomorphism.
\end{proof}

\begin{remark}
Proposition \ref{prop-Kulikov} is classical if $A$ is an elliptic curve over $L$. In this case, the minimal regular model $\mathcal{E}/T$ forms a Kulikov model for $A$, and its $T$-smooth locus forms a N\'eron model (cf.~e.g. \cite[Theorems 9.4.35 and 10.2.14]{liu}). To be precise, one does not need to make any assumption on the residue characteristic of $T$. 

When $\mathrm{dim}(A) > 1$, the statement does not seem to have been known previous to \cite{HaNi}. 
\end{remark}

\begin{remark}
The existence of a Kulikov model for an abelian variety $A/L$ as above is not guaranteed. In fact, one can find examples of Jacobians of genus $2$ curves where no Kulikov model exists (see the discussion after \cite[Corollary 6.9]{HaNi}). 
\end{remark}

\subsection{}

We now return to our fibration
$$ f \colon X \to S. $$ 
Note that $f$ is both a projective morphism and a local complete intersection (cf. e.g.~\cite[Example 6.3.18]{liu}). Then, by \cite[Theorem 6.4.32]{liu}, the relative dualizing sheaf of $f$ coincides with the relative canonical sheaf $\omega_{X/S}$.

Let $ s \in S $ be an arbitrary closed point. We denote by $ R = \mathcal{O}_{S,s}$ the local ring at $s$, and by $R'$ the $\mathfrak{m}_R$-adic completion of $R$. We let $K$ and $K'$ denote the quotient fields of $R$ and $R'$, respectively.

Let $\mathcal{X}$ denote the pullback of $X$ via the canonical map $ \mathrm{Spec}(R) \to S $. Then $\mathcal{X}$ is a regular model of its generic fiber $ X_{K} $, and it is projective and flat over $R$. Pulling back further to $R'$ gives a flat and projective model $\mathcal{X}'/R'$. 

\begin{lemma}\label{lemma-canonical}
Let $\mathcal{X}'/R'$ be defined as above. Then $\mathcal{X}'$ is a Kulikov model. 

\end{lemma}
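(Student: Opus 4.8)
The plan is to verify the two defining conditions of a Kulikov model (Definition~\ref{def-Kulikov}) for $\mathcal{X}'/R'$: namely that $\mathcal{X}'$ is regular, flat and proper over $R'$, and that its relative dualizing sheaf is trivial. Flatness and properness are already recorded above, since $\mathcal{X}'$ is obtained from the flat projective model $\mathcal{X}/R$ by the base change $\Spec(R') \to \Spec(R)$, and these properties are stable under base change. The generic fiber of $\mathcal{X}'$ is $X_{K'}$, the pullback of the abelian variety $X_\eta$ to $K'$, so it is indeed an abelian $L$-variety in the language of the previous subsection (with $L = K'$ and $T = R'$). Thus the content of the lemma is concentrated in the regularity of $\mathcal{X}'$ and the triviality of its relative dualizing sheaf.

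For regularity, I would first observe that $\mathcal{X} = X \times_S \Spec(R)$ is regular: it is an open subscheme (a localization at $s$) of the smooth $k$-variety $X$, hence regular. Passing from $R$ to its completion $R'$ preserves regularity because the extension $R \subset R'$ has ramification index $1$ in the sense of Section~\ref{subsubsec-descent}; completion of a regular local ring is regular, and flatness of $R \to R'$ together with the regularity of the fibers lets one conclude that $\mathcal{X}' = \mathcal{X} \times_R R'$ is regular. Concretely, the local rings of $\mathcal{X}'$ are flat over those of $\mathcal{X}$ with regular (in fact field, or trivial) fibers, so regularity ascends.

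For the relative dualizing sheaf, the key point is the compatibility noted just before the lemma: because $f \colon X \to S$ is a projective local complete intersection morphism, its relative dualizing sheaf coincides with the relative canonical sheaf $\omega_{X/S}$, and $\omega_{X/S} = \omega_X \otimes f^* \omega_S^{-1}$. The hypothesis $\omega_X = \mathcal{O}_X$ then shows $\omega_{X/S} = f^* \omega_S^{-1}$, which is the pullback of a line bundle from the base curve $S$. I would next use that the relative dualizing sheaf is compatible with the base changes $\Spec(R) \to S$ and $\Spec(R') \to \Spec(R)$, so the relative dualizing sheaf of $\mathcal{X}'/R'$ is the pullback of $\omega_S^{-1}$ restricted to $\Spec(R')$. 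But any line bundle on the spectrum of the local (indeed complete) ring $R'$ is trivial, and its pullback to $\mathcal{X}'$ is therefore trivial as well. Hence the relative dualizing sheaf of $\mathcal{X}'$ is $\mathcal{O}_{\mathcal{X}'}$, completing the verification.

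The main obstacle, and the step deserving the most care, is the clean identification of the relative dualizing sheaf after base change. One must justify that relative dualizing sheaves commute with the flat base changes involved and that the local-complete-intersection structure (hence the coincidence of relative dualizing and relative canonical sheaves) is preserved; I would appeal to \cite[Theorem 6.4.32]{liu} together with the base-change compatibility of dualizing sheaves for l.c.i. morphisms. The regularity ascent along $R \to R'$ is standard but should be stated carefully, since it is exactly the ramification-index-$1$ hypothesis from Section~\ref{subsubsec-descent} that makes the completion well-behaved.
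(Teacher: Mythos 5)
Your proof is correct and follows essentially the same route as the paper's: triviality of the relative dualizing sheaf via the adjunction $\omega_{X/S}\cong\omega_X\otimes f^*\omega_S^{-1}$ plus base-change compatibility for the l.c.i.\ morphism $f$, and regularity of $\mathcal{X}'$ transported from $X$ along the completion $R\to R'$ (you use flat ascent with regular fibers where the paper compares completed local rings, but these are two formulations of the same standard fact).
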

\begin{proof}
Let $ U \subset S $ be an open neighbourhood of $s$ such that $ \omega_{U} := \omega_{S} \vert_U $ is trivial. Let $ f \colon X_U \to U $ denote the restriction of $ f \colon X \to S $. Then we find that
$$f^* \omega_{U} \otimes \omega_{X_U / U} \cong \omega_{X_U},  $$
so $ \omega_{X_U / U} \cong \omega_{X_U} \cong \mathcal{O}_{X_U} $, since $ \omega_{X_U} $ is the restriction of the trivial line bundle $ \omega_{X} $ to the open subset $X_U$.

Since the formation of relative canonical sheaves commute with pullback, the sheaf $ \omega_{\mathcal{X}'/R'} $ is the pullbacks of $ \omega_{X_U / U} $ along the canonical projection $ \mathcal{X}' \to X_U $, and is therefore trivial as well (note that for this conclusion, we use the fact that $f$ is a flat, quasi-projective l.c.i.).

It remains to show that $\mathcal{X}'$ is regular. As the generic fiber is smooth, it suffices to consider (closed) points in the special fiber. However, the projection map $ \pi \colon \mathcal{X}' \to X $ restricts to an isomorphism on the special fibers over $s$. Moreover, for any closed point $ x \in X_s $, $\pi$ induces an isomorphism of the completions of the local rings $\mathcal{O}_{X,x}$ and $\mathcal{O}_{\mathcal{X}', x}$ (see the proof of \cite[Lemma 8.3.49]{liu}). Since $X$ is regular, this easily implies that also $\mathcal{X}'$ is regular, since this can be checked on the completed local rings at the closed points.

\end{proof}

\subsection{Conclusion}
We are now ready to prove our main result in this section.

\begin{theorem}
\label{theorem: Neronmodel}
The $f$-smooth locus $\mathcal{A}$ of $ f \colon X \to S $ is the N\'eron model of the generic fibre $ X_{K(S)} $. 
\end{theorem}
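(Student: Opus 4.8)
The plan is to reduce the global statement to the local, complete situation already handled by Proposition~\ref{prop-Kulikov}, and then glue. Since being a N\'eron model is a local property on the base (the N\'eron mapping property and the group-scheme structure can both be checked fibre-by-fibre over the closed points of $S$, the generic fibre being the given abelian variety), it suffices to prove that for each closed point $s \in S$, the restriction of $\calA$ to $\Spec R$ is the N\'eron model of $X_K$, where $R = \OO_{S,s}$. First I would pass to the completion: by the descent result recalled in \S\ref{subsubsec-descent} (that is, \cite[Theorem~7.2/1]{neron}), the formation of N\'eron models descends along the extension $R \subset R'$ of ramification index $1$, where $R'$ is the $\mm_R$-adic completion. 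Thus it is enough to show that the $f$-smooth locus, pulled back to $R'$, is the N\'eron model of $X_{K'}$.

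Over the complete base this is exactly where the earlier work pays off. By Lemma~\ref{lemma-canonical}, the model $\calX'/R'$ obtained by pulling back $X$ is a Kulikov model of its generic fibre $X_{K'}$, which is an abelian variety over $L := K'$ (here $T := R'$ is a complete DVR of equal characteristic zero, as required by the setup of \S\ref{sec-Kulikov}). Proposition~\ref{prop-Kulikov} then applies directly and tells us that the $T$-smooth locus $\mathrm{Sm}(\calX')$ is a N\'eron model of $X_{K'}$. It remains only to observe that $\mathrm{Sm}(\calX')$ is precisely the pullback to $R'$ of the $f$-smooth locus $\calA$: smoothness of a morphism is stable under base change and can be checked on fibres, and since $\calX' \to \Spec R'$ is obtained from $f \colon X \to S$ by the flat base change $\Spec R' \to S$, its smooth locus is the preimage of $\calA$. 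Combining this identification with the descent isomorphism of \S\ref{subsubsec-descent} shows that $\calA \times_S \Spec R$ is the N\'eron model of $X_K$, which is what we wanted.

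I expect the only genuinely delicate point to be the bookkeeping of the gluing argument, namely verifying that a smooth, separated, finite-type $S$-group scheme whose restriction to each $\Spec\OO_{S,s}$ is a N\'eron model is itself a N\'eron model over $S$. The group-scheme structure on $\calA$ and the fact that $\calA(S) = X_\eta(K(S))$ follow from the local statements together with the uniqueness (up to canonical isomorphism) in the N\'eron mapping property, which lets the locally defined structures glue. The N\'eron mapping property over $S$ for a smooth $S$-scheme $Y$ and a $K(S)$-morphism $Y_{K(S)} \to X_\eta$ reduces to the local property at each $s$ by uniqueness of the extensions over the overlaps, the generic point being common. Everything else — that $\calA$ is smooth, separated and of finite type over $S$ (inherited from $X$ being projective and $f$ smooth on $\calA$) — is routine and inherited from the corresponding facts for $f \colon X \to S$.
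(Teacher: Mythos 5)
Your proposal is correct and follows essentially the same route as the paper's proof: localize at each closed point of $S$ (the paper cites \cite[Proposition 1.2/4]{neron} for this reduction, which is exactly the gluing step you flag as delicate), pass to the completion, invoke Lemma~\ref{lemma-canonical} and Proposition~\ref{prop-Kulikov} to get a N\'eron model over $R'$, identify its smooth locus with the pullback of $\mathcal{A}$, and descend along $R \to R'$ as in \S\ref{subsubsec-descent}. No substantive differences.
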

\begin{proof}
By \cite[Proposition 1.2/4]{neron}, it suffices, for every closed point $ s \in S$, to prove that 
$$\mathcal{A} \times_S \mathrm{Spec} \, \mathcal{O}_{S,s}$$ 
is the N\'eron model of its generic fiber. 

Recall the standard fact that, since $f$ is flat and of finite type, the formation of the smooth locus commutes with arbitrary basechange.

We put $ R = \mathcal{O}_{S,s} $ and denote by $R' $ the $\mathfrak{m}_R$-adic completion of $R$. Combining Proposition \ref{prop-Kulikov} and Lemma \ref{lemma-canonical}, we find that the pullback $ \mathcal{A} \times_S R' $ is a N\'eron model. However, it is also the pullback of $ \mathcal{A} \times_S R $ via the completion map $ R \to R' $. Thus, as we have explained in \ref{subsubsec-descent}, it then follows by descent that $ \mathcal{A} \times_S R $ is a N\'eron model.

\end{proof}

\section{Potential density on Calabi-Yau threefolds}
\label{sec: potential}

Here we collect some general results regarding potential density of rational points on abelian Calabi-Yau threefolds. We assume that our ground field $k$ is a field of characteristic zero.

\begin{lemma}
\label{lemma: specialization}
Assume that $S$ is a geometrically irreducible quasi-projective variety over $k$.
Let $f \colon \mathcal X \rightarrow S$ be a projective and faithfully flat morphism such that 
\begin{enumerate}
\item $\mathcal X$ is geometrically irreducible,
\item the generic fiber over the function field $k(S)$ is geometrically irreducible. 
\end{enumerate}
We denote the set of sections by $\mathrm{Sec}(\mathcal X/S)$ and consider a subset $\mathcal S \subset \mathrm{Sec}(\mathcal X/S)$.
Suppose that for some $s \in S(k)$, the image of $\mathcal S$ via the specialization map at $s$
\[
\mathcal S \subset \mathrm{Sec}(\mathcal X/S) \rightarrow \mathcal X_s(k)
\]
is Zariski dense in $\mathcal X_s(\overline{k})$.
Then the set of sections in $\mathcal S$ is Zariski dense in $\mathcal X$.
\end{lemma}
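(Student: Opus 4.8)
The plan is to argue by contradiction, working on the total space and reducing the problem to a statement about fibre dimensions over a discrete valuation ring. Write $Z := \overline{\bigcup_{\sigma \in \mathcal{S}} \sigma(S)} \subseteq \mathcal{X}$ for the Zariski closure of the union of the images of the sections in $\mathcal{S}$; the goal is exactly to show $Z = \mathcal{X}$, and since $\mathcal{X}$ is geometrically irreducible this is equivalent to $\dim Z = \dim \mathcal{X}$. I would first record the two structural facts that make the hypotheses usable: because $f$ is faithfully flat of finite type with $\mathcal{X}$ and $S$ irreducible, every nonempty fibre $\mathcal{X}_u$ is equidimensional of dimension $d := \dim \mathcal{X} - \dim S$; and because each $\sigma \in \mathcal{S}$ is a section, $f|_{\sigma(S)} \colon \sigma(S) \to S$ is an isomorphism, so $Z$ dominates $S$ and its generic fibre is nonempty.

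Now suppose, for contradiction, that $Z \neq \mathcal{X}$, so that $\delta := \dim Z - \dim S < d$. By generic flatness there is a dense open $V \subseteq S$ such that $Z \times_S V \to V$ is flat, and over $V$ the fibres of $Z \to S$ have dimension $\le \delta < d$. The heart of the argument is to exploit the resulting jump: the special fibre $Z_s$ may well be all of $\mathcal{X}_s$ (indeed the density hypothesis forces $\mathcal{X}_s \subseteq Z$), but I claim the sections cannot actually specialize into this ``jumped'' part of the fibre. To make this precise I would pass to a DVR. Choose an integral curve $C \subseteq S$ through $s$ meeting $V$, and let $R = \mathcal{O}_{C,s}$ (after normalizing $C$ at $s$, harmless in characteristic zero), a discrete valuation ring with fraction field $k(C)$ and residue field $k$. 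Restricting the family to $C$ gives $\mathcal{X}_C \to C$ and a closed subscheme $Z_C = Z \times_S C$, and each $\sigma \in \mathcal{S}$ restricts to a section $\sigma_C$ with $\sigma_C(s) = \sigma(s)$.

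The key step is then to compare $\sigma(s)$ with the flat limit of the generic fibre. Let $\mathcal{W} \subseteq \mathcal{X}_C$ be the scheme-theoretic closure over $R$ of the generic fibre $(Z_C)_{k(C)}$; since $R$ is a DVR, $\mathcal{W}$ is $R$-flat, so $\dim \mathcal{W}_s = \dim (Z_C)_{k(C)}$. As the generic point of $C$ lies in $V$, this generic fibre has dimension $\le \delta < d$, whence $\mathcal{W}_s \subsetneq \mathcal{X}_s$ is a proper closed subset. On the other hand, each $\sigma_C$ gives an $R$-point whose generic point lands in $(Z_C)_{k(C)} \subseteq \mathcal{W}$; since $\mathcal{W}$ is closed and a section is the closure of its generic point, the whole image of $\sigma_C$, and in particular $\sigma_C(s) = \sigma(s)$, lies in $\mathcal{W}_s$. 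Thus the entire image of $\mathcal{S}$ under specialization at $s$ is contained in the proper closed subset $\mathcal{W}_s \subsetneq \mathcal{X}_s$, contradicting the assumption that it is Zariski dense in $\mathcal{X}_s(\overline{k})$. This contradiction yields $Z = \mathcal{X}$.

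I expect the main obstacle to be precisely the control of this fibre-dimension jump: one must resist a naive dimension count on $Z$ (which genuinely fails, since $Z$ really can contain the full fibre $\mathcal{X}_s$ while having $\dim Z < \dim \mathcal{X}$ as soon as $\dim S \ge 2$), and instead isolate the flat limit $\mathcal{W}$ of the generic fibre, whose special fibre is strictly smaller. The technical care lives in (i) checking that restricting to a suitable curve through $s$ preserves both the density of the specializations and the dimension bound on the generic fibre, and (ii) the flatness of the scheme-theoretic closure over the DVR, which is exactly what converts ``the generic fibre is small'' into ``the flat limit inside the special fibre is small.'' The remaining verifications---equidimensionality of the fibres of $f$, and that a section equals the closure of its generic point---are standard.
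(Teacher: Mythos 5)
Your argument is correct, and its engine is the same one the paper uses: over a one-dimensional regular base, flatness forces the closure of the sections to meet the special fibre in a closed subset of dimension $\dim Z-1<\dim\mathcal X_s$, which is incompatible with density of the specializations; this is precisely the paper's proof in the case where $S$ is a curve. Where you genuinely diverge is in the reduction from a higher-dimensional base. The paper invokes a Bertini-type statement that the geometrically irreducible curves through $s$ defined over $k$ sweep out a dense subset of $S$, applies the curve case to each restricted family $\mathcal X\times_S\tilde{C}$, and concludes that $\mathcal Z$ contains a dense union of such families; this requires re-checking the hypotheses for each restriction, in particular the irreducibility of $\mathcal X\times_S\tilde{C}$, which is why the curves must be chosen generically. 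You instead bound the relative dimension of $Z$ over $S$ once and for all by generic flatness and transport that bound to $s$ along a single auxiliary curve via the flat limit $\mathcal W$ over the discrete valuation ring: this needs only one curve through $s$ meeting the good locus $V$, and it never requires $\mathcal X\times_S C$ or $Z\times_S C$ to be irreducible, since the flat-limit step is insensitive to that. Two points are worth making explicit when you write this up: first, $V$ must be shrunk not only to a flatness locus but also to avoid the images of the components of $Z$ that fail to dominate $S$ (flatness alone does not bound their fibre dimensions, and $\mathcal X_s$ itself is typically such a component once the density hypothesis forces $\mathcal X_s\subseteq Z$); second, after normalizing $C$ the residue field at the chosen point above $s$ may be a finite extension of $k$, which is harmless because the image of $\mathcal W_{\tilde{s}}$ in $\mathcal X_s$ is still closed of dimension $<d$.
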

\begin{proof}
If $S$ is a point, the statement is obvious, so we can assume that $ \mathrm{dim}(S) > 0 $. We first prove our assertion when $S$ is an integral regular curve over $k$.
Let $\mathcal Z$ be the closure of the union of sections of $\mathcal S$ in $\mathcal X$.
Suppose that $ \mathcal Z$ is not equal to $\mathcal X$.
Let $Z$ denote one of the irreducible components of $\mathcal Z$. Then $Z$ necessarily maps dominantly to $S$. 
Since $S$ is an integral regular curve, the irreducible component $Z$ is flat over $S$. This implies that the intersection of $Z$ and $\mathcal X_s$ is a proper closed subset in $\mathcal X_s$. This contradicts with the assumption, because the specialization of any section in $\mathcal S$ must be contained in the intersection of one of irreducible components $Z$ with  $\mathcal X_s$.

Suppose that $S$ is a geometrically irreducible variety over $k$.
Let $\mathcal Z$ be the Zariski closure of the union of the elements of $\mathcal S$ in $\mathcal X$.
Since $S$ is quasi-projective, by a Bertini type argument, the union of all geometrically irreducible curves $C$ defined over $k$ and passing through $s$ forms a dense set in $S$. Let $C$ be such a curve, and denote by $\tilde{C}$ the normalization of $C$.
Then the result for integral regular curves shows that the (image of) the family $\mathcal X \times_S {\tilde{C}}$ is contained in $\mathcal Z$ for any $C$. This shows that $\mathcal Z = \mathcal X$.
\end{proof}

\begin{thm}
\label{thm: doublefibration}
Let $X$ be an abelian Calabi-Yau threefold defined over a number field $k$.
Suppose that $X$ has an effective movable non-big divisor $D$ such that the abelian fibration on $X$ does not factor through the Iitaka fibration of $D$ birationally. Then $X$ satisfies the potential density property for the Zariski topology.
\end{thm}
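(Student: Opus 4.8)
The plan is to use the two fibrations carried by $X$---the abelian surface fibration $f\colon X\to\mathbb{P}^1$ and the Iitaka fibration $\phi_D\colon X\dashrightarrow Z$ of the non-big divisor $D$---to produce, on a suitable birational model, a genus-one fibration over a surface, and then to spread rational points along it by means of the specialization Lemma \ref{lemma: specialization}. First I would pin down $\phi_D$. Since $D$ is movable its fixed part vanishes, so $D$ is not purely vertical for $f$: a vertical movable divisor is pulled back from $\mathbb{P}^1$, which would identify $\phi_D$ with $f$ and make $f$ factor through $\phi_D$, against the hypothesis. Consequently $\phi_D$ is genuinely transverse to $f$, the restriction $D_\eta:=D|_{A_\eta}$ to a general fibre $A_\eta$ is a nonzero effective movable divisor on the abelian surface $A_\eta$, and $1\le\kappa(D)\le 2$.

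The geometric heart of the proof is to extract an elliptic fibration from this configuration, and I expect it to be the main obstacle. I would split according to $\kappa(D)$. If $\kappa(D)=1$, then $\phi_D$ maps $X$ onto a curve and $D$ is, numerically, a multiple of a fibre class of $\phi_D$ (an abundance-type input for the movable non-big class $D$); hence $D^2\equiv 0$ and $D_\eta^2=0$. An effective divisor of self-intersection zero on an abelian surface is a fibre of an elliptic fibration $A_\eta\to E'$, so $A_\eta$ is non-simple, isogenous to a product $E\times E'$, and the elliptic curves $E\subset A_\eta$ sweep out an elliptic fibration of $X$. If $\kappa(D)=2$, the general fibre $G$ of $\phi_D$ is a curve; since $\omega_X$ is trivial, adjunction on the Calabi--Yau threefold gives $\omega_G=\mathcal{O}_G$, so $G$ is an elliptic curve, and transversality makes $f|_G$ non-constant, i.e. $G$ a multisection of $f$. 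In both cases, after a finite extension of $k$ and a birational modification $X'\dashrightarrow X$ built from flops (Lemma \ref{lemma:flops}), I obtain a genus-one fibration $\psi\colon X'\to W$ onto a surface $W$ with elliptic general fibre. Controlling this birational model, arranging $\psi$ to be an honest flat projective morphism with geometrically irreducible fibres, and showing that $W$ is not of general type (which should follow from $\kappa(X')=0$ together with the subadditivity of Kodaira dimension for elliptic fibrations, so that $W$ has potential density by \cite{HaTs00}, \cite{BoTs98}, \cite{BoTs00}) is the delicate part.

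Granting the fibration $\psi\colon X'\to W$, the density argument is then standard. After a further finite extension of $k$ I would produce a section $\sigma$ of $\psi$ of infinite order: one starts from a natural multisection (for instance the zero-section of $f$, or a component of $G$), passes to a section by a trace or a base change of $W$, and increases the rank to be positive using Mordell--Weil, Lang--N\'eron (Theorem \ref{Lang-N}) and N\'eron's specialization theorem; the N\'eron model description of Theorem \ref{theorem: Neronmodel} and its classical elliptic analogue guarantee that the multiples $n\sigma$ remain pairwise distinct sections. By N\'eron specialization there is a point $w_0\in W(k')$ at which $\sigma(w_0)$ is non-torsion, so the specializations $\{n\sigma(w_0)\}_{n\in\mathbb Z}$ form an infinite subgroup of the elliptic curve $\psi^{-1}(w_0)$ and are therefore Zariski dense in it. Applying Lemma \ref{lemma: specialization} to $\psi$ and the family $\mathcal S=\{n\sigma\}$ shows that these sections are Zariski dense in $X'$, and hence that $X(k')$ is Zariski dense in $X$, as desired. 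When $A_\eta$ is non-simple one may alternatively work directly with $f$, using Remark \ref{simplicity} and two independent sections coming from the two elliptic factors to obtain a dense subgroup of a single fibre before invoking Lemma \ref{lemma: specialization}. The crux throughout remains the second step: the birational extraction of $\psi$ and the verification that its base surface carries a dense set of rational points.
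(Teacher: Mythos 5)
Your proposal takes a genuinely different route from the paper, and as written it has two serious gaps, both located exactly where you predict the difficulty to lie. First, the base of your elliptic fibration: subadditivity only gives $\kappa(W)\le 0$, and ``$W$ is not of general type, hence has potential density by \cite{HaTs00}, \cite{BoTs98}, \cite{BoTs00}'' is not a valid inference. Those results cover rational, abelian, bielliptic, Enriques and \emph{elliptic} K3 surfaces; potential density for a general K3 surface is precisely the open problem in dimension two, and nothing in your sketch rules out $W$ being such a surface (one would need, say, the canonical bundle formula to force $-K_W$ effective, or to exhibit a second fibration on $W$; you do neither). Second, and more fundamentally, producing a section of $\psi$ of infinite order is not a routine application of Mordell--Weil and Lang--N\'eron. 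One cannot ``increase the rank to be positive'': after base-changing along a multisection $M$ to obtain the tautological section, one must prove that this section is non-torsion, i.e. that $M$ is a non-torsion multisection. That is the hard geometric core of \cite{BoTs00} (their saliently ramified multisections), and your outline supplies no mechanism for it. The construction of $\psi$ itself in the case $\kappa(D)=1$ --- passing from the elliptic pencil on the non-simple generic fibre $A_\eta$ to a global fibration over a surface --- is also left implicit.

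The paper avoids both obstacles by inverting the roles of the two fibrations. It runs the $D$-MMP, takes the semi-ample fibration $g\colon\tilde X\to B$ of the nef transform $\tilde D$, and uses a single general fibre $Y$ of $g$ --- an elliptic curve, an abelian surface, or a K3 surface which is automatically \emph{elliptic} because $f$ cuts out on it a divisor of Iitaka dimension one --- merely as a multisection of $f$ with potentially dense rational points. The spreading is then done inside the abelian surface fibration $f$ itself: by \cite[Proposition 3.1]{HaTs00} a smooth fibre $A_t$ acquires, after a finite extension, a point $P$ with $\mathbb{Z}P$ dense in $A_t$; choosing $Y$ to pass through $P$, the multiples of the diagonal section of $Y\times_{\mathbb{P}^1}X\to Y$ specialize at $P$ to $\mathbb{Z}P$, and Lemma~\ref{lemma: specialization} concludes. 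No section of infinite order of an elliptic fibration is ever needed, because on an abelian surface over a number field a point with dense cyclic orbit comes for free. If you wish to rescue your strategy you must supply the non-torsion multisection and control the base $W$; the paper's rearrangement is the cheaper path.
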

\begin{proof}
We denote the abelian fibration by $f \colon X\rightarrow \mathbb P^1$.
We apply $D$-MMP over $\overline{\mathbb Q}$ and obtain a sequence of $D$-flops $\phi \colon X \dashrightarrow \tilde{X}$ where the strict transform $\tilde{D}$ is nef. After taking some finite extension, we may assume that the result of MMP is defined over the ground field. Then $\tilde{X}$ is a smooth Calabi-Yau threefold and it follows from Theorem 1.1 page 99 of \cite{KMM94} (giving a proof of the log abundance conjecture in dimension 3, see \cite[Conjecture 3.12]{KM98} for the general log abundance conjecture) that $\tilde{D}$ is semi-ample. We consider its semi-ample fibration $g \colon \tilde{X} \rightarrow B$. We denote a general fiber of $g$ by $\tilde{Y}$ and its strict transform on $X$ by $Y$. Then $\tilde{Y}$ is either an elliptic curve, an abelian surface, or a K3 surface. When $\tilde{Y}$ is an elliptic curve or an abelian surface, then its potential density follows from \cite[Proposition 3.1]{HT00}. Suppose that $\tilde{Y}$ is a K3 surface. We denote the minimal resolution of $Y$ by $Y'$. Since the minimal model for a K3 surface is unique, the minimal model of $Y'$ is $\tilde{Y}$. Then since $Y$ admits a fibration to $\mathbb P^1$ via $f$, it is equipped with a semiample divisor of Iitaka dimension $1$. This implies that the K3 surface $\tilde{Y}$ admits a nef divisor of Iitaka dimension $1$ and it must be semi-ample. Thus $\tilde{Y}$ comes with fibrations, and the only fibrations on K3 surfaces are elliptic fibrations. The potential density for elliptic K3 surfaces is proved by the first author and Tschinkel (\cite{BoTs00}).

Pick a general point $t\in \mathbb P^1(k)$ and consider its smooth fiber $A_t = f^{-1}(t)$. This is an abelian surface. It follows from \cite[Proposition 3.1]{HaTs00} that after taking a finite extension of $k$, there exists a point $P \in A_t(k)$ such that the subgroup $\mathbb ZP$ is Zariski dense in $A_t$.
After replacing $P$ by a multiple of $P$, we may assume that $P$ is not contained in the indeterminacy of $\phi \colon X \dashrightarrow \tilde{X}$ and the fiber $\tilde{Y}$ passing through $P$ is smooth. We fix this $Y$.
After taking a finite extension if necessary, we may assume that the set of rational points on $Y$ is Zariski dense. By the assumption we made, $Y$ is not contained in $A_t$.
We denote by $V$ the fiber product $Y \times_{\mathbb P^1} X$. Being a pullback of $X$, $V$ admits an abelian surface fibration over $Y$ (with the zero section $\sigma$). 
By construction, the image of the specialization map
\[
\mathrm{Sec}(V/Y) \rightarrow V_P(k)
\]
is Zariski dense. Now the Zariski closure of the set of rational points on $V$ contains the set of sections because the set of rational points on $Y$ is Zariski dense. It follows from Lemma~\ref{lemma: specialization} that the set of rational points on $V$ is Zariski dense. Since $V$ is dominant to $X$, the set of rational points on $X$ is also Zariski dense.
\end{proof}

\section{The double covers of $\mathbb P^3$ ramified along octic surfaces (type $(1,4)$)}
\label{(1,4)}
In the remaining sections, we turn to examples of abelian Calabi-Yau threefolds introduced in \cite{GrPo01}. 
We assume that our ground field $k$ is an algebraically closed field of characteristic zero unless otherwise specified.
First we discuss abelian Calabi-Yau threefolds fibered by abelian surfaces of polarization type $(1,4)$ introduced in \cite[Section 2]{GrPo01}. See also \cite{BiLa04} pages 300-304.
Let $H_4$ be the Heisenberg group action on $\mathbb A^4$ generated by
\[
\sigma : x_i \mapsto x_{i-1}, \quad \tau : x_i \mapsto \zeta_4^{-i} x_i
\]
where $\zeta_4$ is a $4$-th root of unity.
We now change coordinates to
\[
z_0 = x_0 + x_1, \quad z_2 = x_3 + x_1,
\]
\[
z_1 = x_0-x_2, \quad z_3 = x_3 - x_1.
\]
We consider the following $3\times 3$ matrix:
\[
N_0 =
\begin{pmatrix}

 (z_0^4z_1^4 + z_2^4z_3^4) & (z_0^2z_1^2 + z_2^2z_3^2)(-z_0^2z_2^2 + z_1^2z_3^2) & (z_0^2z_1^2 - z_2^2z_3^2)(z_0^2z_3^2-z_1^2z_2^2)\\
 (z_0^2z_1^2+ z_2^2z_3^2)(-z_0^2z_2^2 + z_1^2z_3^2) & (z_0^4z_2^4 + z_1^4z_3^4) & (z_0^2z_2^2 + z_1^2z_3^2)(z_0^2z_3^2 + z_1^2z_2^2)\\
 (z_0^2z_1^2 - z_2^2z_3^2)(z_0^2z_3^2 -z_1^2z_2^2) & (z_0^2z_2^2 + z_1^2z_3^2)(z_0^2z_3^2 + z_1^2z_2^2 )& (z_0^4z_3^4 + z_1^4z_2^4)
\end{pmatrix},
\]
which is used to define by blocks the $4\times4$ matrix:
\[
N =
\begin{pmatrix}
z_0^2z_1^2z_2^2z_3^2 & 0_{1,3} \\
0_{3,1}& N_0
\end{pmatrix},
\]
where $0_{n,m}$ stands for an $n\times m$ matrix with zero entries everywhere.
For each $\lambda = (\lambda_0: \lambda_1:\lambda_2:\lambda_3) \in \mathbb P^3$, we define the following octic surface
\[
\overline{A}_\lambda : f_\lambda (z_0:z_1:z_2:z_3)= \lambda N^t\!\lambda = 0.
\]
For a general $\lambda \in \mathbb P^3$, the surface $\overline{A}_\lambda$ is smooth outside of 
$\{ z_0z_1z_2z_3=0 \}$. Set theoretically, the intersection of $\overline{A}_\lambda$ and $\{z_i = 0\}$ is a quartic curve with three ordinary double points at coordinate vertices, and this is a double curve on $\overline{A}_\lambda$. Its minimal resolution $A_\lambda$ is an abelian surface with a polarization of type $(1,4)$. 

Fix a line $l \subset \mathbb P^3$ and let $M$ be a $2\times 4$ matrix generating $l$
and we consider the following determinant:
\[
g_l = \det(M N {}^t\!M).
\]
This degree $16$ polynomial is divisible by $z_0^2z_1^2z_2^2z_3^2$ and we denote by $B_l$ the octic surface defined by the quotient of these polynomials:
\[
B_l : g_l/(z_0^2z_1^2z_2^2z_3^2) = 0.
\]
For a general $l$, the surface $B_l$ has of $148$ ordinary double points, which can be divided into three types: (A) $128$ are contained in the smooth locus of $\overline{A}_\lambda$ for any $\lambda \in l$; (B) $16$ others are contained in the double locus of $\overline{A}_\lambda$ for any $\lambda \in l$; (C) the remaining $4$ singularities are coordinate vertices.
We denote the double cover of $\mathbb P^3$ ramified along $B_l$ by $V_{4, l}$.
We also consider the following threefold:
\[
X_l = \{ (z, \lambda) \in \mathbb P^3 \times l \mid f_\lambda(z) = 0\}
\]
and we denote its normalization by $V^1_{4,l}$.
The following theorem is proved in \cite[Theorem 2.2]{GrPo01}:
\begin{thm}
The threefold $V^1_{4,l}$ is a small resolution of $V_{4,l}$
and it is a smooth abelian Calabi-Yau threefold fibered by $A_\lambda, \lambda \in l$.
\end{thm}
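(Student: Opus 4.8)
The plan is to exploit the two projections of the incidence variety $X_l \subset \mathbb{P}^3 \times l$ and to reconcile them with the double cover description of $V_{4,l}$. Writing $\lambda = uM$ for $u = (u_1 : u_2) \in l \cong \mathbb{P}^1$, the defining equation becomes
\[ f_\lambda(z) = u\,(M N\, {}^t\!M)\, {}^t\!u = u\, Q(z)\, {}^t\!u, \]
where $Q(z) = M N(z)\,{}^t\!M$ is a symmetric $2 \times 2$ matrix whose entries are octics in $z$. For fixed $z$ the equation $u\,Q(z)\,{}^t\!u = 0$ is a binary quadratic in $u$, so the first projection $p \colon X_l \to \mathbb{P}^3$, $(z,u) \mapsto z$, is generically finite of degree $2$, the two sheets merging exactly where the discriminant $\det Q(z) = g_l(z)$ vanishes. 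Completing the square identifies $X_l$ birationally with the double cover $w^2 = -g_l$; since $g_l = (z_0 z_1 z_2 z_3)^2 h_l$ with $h_l = g_l/(z_0^2 z_1^2 z_2^2 z_3^2)$ the equation of $B_l$, the substitution $w \mapsto w/(z_0 z_1 z_2 z_3)$ removes the spurious branching along the coordinate hyperplanes and yields $w^2 = -h_l$, the double cover ramified along $B_l$. This shows that $X_l$, its normalization $V^1_{4,l}$, and $V_{4,l}$ are all birational, and that normalization is precisely the operation clearing the $(z_0 z_1 z_2 z_3)^2$ factor.

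The fibration structure is then read off from the second projection $q \colon X_l \to l$, whose fibre over $\lambda$ is the octic $\overline{A}_\lambda$. Since $A_\lambda$ is by construction the minimal resolution, equivalently the normalization, of $\overline{A}_\lambda$, normalizing $X_l$ fibrewise produces the desired abelian surface fibration $V^1_{4,l} \to l \cong \mathbb{P}^1$ with fibres $A_\lambda$.

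The heart of the argument is the local analysis of singularities. Away from $\{z_0 z_1 z_2 z_3 = 0\}$ and the nodes of $B_l$, the double cover $V_{4,l}$ is smooth and $X_l$ maps isomorphically to it. Over each ordinary double point of $B_l$, the double cover $w^2 = -h_l$ acquires a three-dimensional ordinary double point (conifold) $w^2 = q(x,y,z)$ with $q$ of rank $3$, and the incidence description of $X_l$ singles out, through the choice of root $[u]$ of the binary quadratic, one of the two small resolutions of each such conifold, replacing the node by a $\mathbb{P}^1$. I would verify in suitable local coordinates that the resulting map $V^1_{4,l} \to V_{4,l}$ is small (proper, birational, with exceptional locus of codimension $\geq 2$) and that $V^1_{4,l}$ is smooth there. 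The three types of nodes must be treated separately: type (A), lying in the smooth locus of the fibres, is the clean conifold picture just described; type (B), lying on the double curves, requires simultaneously accounting for the fibrewise normalization along the coordinate hyperplanes; and type (C), the coordinate vertices, must be checked directly in local coordinates. \textbf{The main obstacle is precisely this case analysis at types (B) and (C)}, where the interaction between the ramification carried by the factor $z_0^2 z_1^2 z_2^2 z_3^2$ and the nodes of $B_l$ is delicate, and one must confirm that normalizing $X_l$ both separates the sheets and smooths the total space without introducing a divisorial exceptional locus.

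Finally I would verify the Calabi-Yau conditions. For the smooth double cover of $\mathbb{P}^3$ branched along an octic one has $\omega = \pi^*(\omega_{\mathbb{P}^3} \otimes \mathcal{O}(4)) = \pi^*\mathcal{O} = \mathcal{O}$; since ordinary double points are Gorenstein canonical and small resolutions are crepant, $\omega_{V^1_{4,l}} \cong \mathcal{O}$. Likewise $h^1(\mathcal{O}_{V_{4,l}}) = h^1(\mathbb{P}^3, \mathcal{O}) \oplus h^1(\mathbb{P}^3, \mathcal{O}(-4)) = 0$, and since nodes are rational singularities this cohomology is preserved under the small resolution, giving $h^1(\mathcal{O}_{V^1_{4,l}}) = 0$. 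Together with smoothness, projectivity, and connectedness, this confirms that $V^1_{4,l}$ is a smooth abelian Calabi-Yau threefold and a small resolution of $V_{4,l}$, fibred by the $A_\lambda$.
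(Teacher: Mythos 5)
The paper does not actually prove this statement: it is quoted verbatim from Gross--Popescu \cite[Theorem 2.2]{GrPo01}, so there is no in-paper argument to compare against. Your outline follows the broad strategy of that source: writing $f_\lambda(z)=u\,Q(z)\,{}^t\!u$ with $Q=MN\,{}^t\!M$ so that the first projection $X_l\to\mathbb{P}^3$ is generically $2:1$ with discriminant $\det Q=g_l$, dividing out the factor $(z_0z_1z_2z_3)^2$ to land on the double cover branched along $B_l$, and reading the fibration off the second projection. These reductions are correct as far as they go.

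As a proof, however, the proposal has a genuine gap, which you candidly flag yourself: every nontrivial assertion of the theorem --- that the normalization $V^1_{4,l}$ is \emph{smooth}, that $V^1_{4,l}\to V_{4,l}$ is \emph{small}, and that the fibres of $V^1_{4,l}\to l$ are precisely the minimal resolutions $A_\lambda$ --- lives exactly in the local analysis at the nodes of types (B) and (C) and along the non-normal locus $\{z_0z_1z_2z_3=0\}$, and none of that is carried out. Two specific points deserve care if you do carry it out. First, normalization is a finite morphism and does not commute with restriction to fibres, so ``normalizing $X_l$ fibrewise produces $A_\lambda$'' is not automatic; one must check that the normalization of the threefold $X_l$ restricts over each $\lambda$ to the normalization (equivalently, the minimal resolution) of the non-normal surface $\overline{A}_\lambda$. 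Second, over a type-(A) node the fibre of $X_l\to\mathbb{P}^3$ is not ``a choice of root of the binary quadratic'' but the whole line $\{z\}\times l$, since $Q(z)$ vanishes identically at such base points of the pencil; it is this $\mathbb{P}^1$, and the ruling of the tangent cone it selects, that realizes one of the two small resolutions of the conifold point. The concluding Calabi--Yau bookkeeping ($\omega\cong\mathcal{O}$ and $h^1(\mathcal{O})=0$ via $\pi_*\mathcal{O}=\mathcal{O}\oplus\mathcal{O}(-4)$, plus crepancy and rationality of the nodes) is standard and correct, but it is conditional on the smoothness and smallness you have not established.
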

Exceptional curves above singularities of type (A) are sections of the fibration $V^1_{4,l}\rightarrow l$.
Curves above singularities of type (B) are multi-sections of degree $2$ tangent to $A_\lambda$, and exceptional curves above singularities of type (C) are multi-sections of degree $4$ tangent to $A_\lambda$ at a point.

We denote the pullback of the hyperplane class on $\mathbb P^3$ by $H$ and the class of abelian surfaces by $A$. On $V^1_{4,l}$, we have the following intersection numbers:
\[
H^3 = 2, \quad H^2A = 8, \quad A^2 = 0.
\]
After applying flops to $148$ exceptional curves, we obtain a birational model $V^2_{4,l}$ and on this model, the intersection numbers are given by
\[
H^3 = 2, \quad H^2A = 8, \quad HA^2 = 0, \quad A^3 = -512.
\]
On this model $8H - A$ is nef and it defines another fibration by abelian surfaces of type $(1,4)$.
To see this, one may note that there is a birational involution $\iota :V^1_{4,l} \dashrightarrow V^1_{4,l}$ and we have $8H - A \sim \iota(A)$. As a corollary we have:
\begin{cor}
Let $k$ be a number field and suppose that a line $l \subset \mathbb P^3$ is defined over $k$.
Then $V^1_{4,l}$ satisfies the potential density property for the Zariski topology.
\end{cor}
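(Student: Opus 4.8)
The plan is to deduce this from the double-fibration criterion, Theorem \ref{thm: doublefibration}, by exhibiting on $V^1_{4,l}$ an effective movable non-big divisor $D$ whose Iitaka fibration differs from the abelian fibration $f \colon V^1_{4,l} \to l$. The natural candidate, suggested by the discussion above, is the class
\[
D = 8H - A,
\]
which on the flopped model $V^2_{4,l}$ is nef and defines the second abelian surface fibration, and which satisfies $D \sim \iota(A)$ for the birational involution $\iota$. Since $H$ and $A$ are defined over $k$, so is $D$, and potential density permits passage to a finite extension in any case.

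First I would check that $D$ is effective and movable on $V^1_{4,l}$. Because $D \sim \iota(A)$ and $A$ is the base-point-free pencil of fibres of $f$, the strict transforms under $\iota$ of the general fibres form a movable family of effective divisors in the class $D$; equivalently, $8H - A$ is semiample on $V^2_{4,l}$, and movability is preserved under the flops connecting $V^2_{4,l}$ and $V^1_{4,l}$ since these are isomorphisms in codimension one. For non-bigness, the cleanest argument is that on $V^2_{4,l}$ the divisor $8H - A$ is nef with
\[
(8H - A)^3 = 512\cdot 2 - 3\cdot 64\cdot 8 + 3\cdot 8\cdot 0 - (-512) = 1024 - 1536 + 512 = 0,
\]
so its Iitaka dimension equals $1$; as the Iitaka dimension is a birational invariant, $D$ is non-big on $V^1_{4,l}$ as well. (Alternatively, $D$ is visibly non-big because its Iitaka fibration has two-dimensional fibres over a curve.)

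The heart of the matter is verifying that $f$ does not factor birationally through the Iitaka fibration of $D$. Here I would restrict $D$ to a general fibre $A_\lambda$ of $f$. On $V^1_{4,l}$ the fibre class satisfies $A^2 = 0$ as a $1$-cycle, so all intersection numbers involving $A^2$ vanish, and by the Hodge index theorem on the abelian surface $A_\lambda$ the restriction $\mathcal O(A)|_{A_\lambda}$ is numerically trivial. Hence
\[
D|_{A_\lambda} = (8H - A)|_{A_\lambda} \equiv 8\, H|_{A_\lambda}.
\]
Since $H|_{A_\lambda}$ is the type $(1,4)$ polarization (consistent with $H^2A = 8$) and hence ample, $D$ restricts to an ample class on the general fibre of $f$. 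Were $f$ to factor through the Iitaka fibration of $D$, the fibre $A_\lambda$ would be contracted by that fibration and $D|_{A_\lambda}$ would be numerically trivial, a contradiction. Thus the hypotheses of Theorem \ref{thm: doublefibration} are met, and potential density follows.

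The main obstacle I anticipate is not the final invocation of Theorem \ref{thm: doublefibration} but the bookkeeping across the flop: one must compute bigness and nefness on $V^2_{4,l}$, where $8H - A$ is nef, while checking effectivity, movability, and the ampleness of $D|_{A_\lambda}$ on $V^1_{4,l}$, and justify that these properties transfer correctly because $V^1_{4,l} \dashrightarrow V^2_{4,l}$ is an isomorphism in codimension one. Once the relation $8H - A \sim \iota(A)$ and the intersection numbers recorded above are taken as given, the verification is routine.
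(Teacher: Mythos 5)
Your proposal is correct and follows the same route as the paper: the paper's proof is the one-line observation that the corollary follows from Theorem~\ref{thm: doublefibration} because $V^1_{4,l}$ admits a second abelian fibration, given by the class $8H-A\sim\iota(A)$ which is nef on the flopped model $V^2_{4,l}$. You have simply filled in the verifications (effectivity, movability, non-bigness via $(8H-A)^3=0$, and the non-factoring condition via ampleness of $D|_{A_\lambda}$) that the paper leaves implicit, and these checks are all sound.
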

\begin{proof}
This follows from Theorem~\ref{thm: doublefibration} as $V^1_{4,l}$ admits a double fibration.
\end{proof}

\begin{rmk}
As shown in \cite[Theorem 2.2]{GrPo01}, the Calabi-Yau threefold $V^1_{4,l}$ has Picard rank $8$. It would be interesting to compute its Mordell-Weil group $\mathrm{MW}(A_\eta)$ where $\eta$ is the generic point of $l$ and see whether the rank is positive or not.
\end{rmk}

\section{Horrocks-Mumford quintic threefolds (type $(1,5)$)}\label{Horrocks}
In this section, we discuss Horrocks-Mumford quintic threefolds and their arithmetic properties.
We recall some basic facts about the Horrocks-Mumford bundle in \cite{HoMu73}, \cite{BHM87}, \cite{Aur89}, and \cite{Bor91}.
Let $e_i \, (i \in \mathbb Z_5)$ be a basis for the $5$-dimensional vector space $V_5$ over $k$.
We can think of them as homogeneous coordinates for $\mathbb P^4$.
The Heisenberg group $H_5 \subset \mathrm{SL}_5(k)$ is generated by two elements:
\[
\sigma \colon e_i \mapsto e_{i+1}, \quad \tau \colon e_i \mapsto \zeta_5^i e_i,
\]
where $\zeta_5$ is a primitive $5$-th root of unity.
Its center is generated by $\zeta I$ and its quotient by the center is isomorphic to $G := \mathbb Z_5 \times \mathbb Z_5$.

The Horrocks-Mumford bundle is a rank $2$ vector bundle $\mathcal F$ on $\mathbb P^4$, with general global sections $s \in H^0(\mathcal F)$ defining abelian surfaces $A_s \subset \mathbb P^4$ of degree $10$. Their polarization type is $(1,5)$.
The Heisenberg group $H_5$ is acting on $\mathcal F$ and
the group $G$ is acting on $A_s$ by translations that preserve the hyperplane class.
The abelian surfaces $A_s$ are also invariant under the involution:
\[
\iota \colon e_k \mapsto e_{-k}.
\]
We have $h^0(\mathcal F) = 4$ and $\wedge^2H^0(\mathcal F) = H^0(\wedge^2 \mathcal F)^{H_5} = H^0(\mathcal O(5))^{H_5}$. The $H_5$-invariant quintics define a linear system whose base locus is the union of $25$ lines:
\[
\cup_{i,j} L_{i,j} = \cup \sigma^i\tau^j \{x_0=x_1+x_4 = x_2+x_3 =0\}.
\]
Any member of this linear system is called a Horrocks-Mumford quintic, and its linear system defines a rational map $\xi \colon \mathbb P^4 \dashrightarrow \Omega \subset \mathbb P^5$ whose image is a quadric hypersurface $\Omega$ which can be identified with the Grassmanian $\mathrm{Gr}(2, H^0(\mathcal F))$.
Then, for general $x \in \mathbb P^4$, the map $\xi$ is given by
\[
x \mapsto \text{the pencil of sections vanishing at $x$},
\]
and it is generically finite of degree $100$.

It is shown in \cite[Section 5]{HoMu73} that the following fivefold is smooth:
\[
Z := \{(x, s) \in \mathbb P^4 \times \mathbb P(H^0(\mathcal F)) : x \in A_s\}.
\]
By Bertini's theorem, for any general pencil $\ell \subset \mathbb{P}(H^0(\mathcal F))$
the resulting threefold 
$$\tilde{X} = Z \cap (\mathbb P^4 \times \ell)$$ 
is smooth. This is a smooth abelian Calabi-Yau threefold, called a Horrocks-Mumford Calabi-Yau threefold.
Let $s_1, s_2$ be a basis for $\ell$.
Then $\tilde{X}$ is a smooth resolution of the Horrocks-Mumford quintic $X$ defined by $s_1\wedge s_2 =0$, and whose singular locus $X_{\mathrm{sing}}$ consists of $100$ nodes. The Betti numbers of $\tilde{X}$ are given by
\[
b_1(\tilde{X}) = 0, \, b_2(\tilde{X}) = 4, \, b_3(\tilde{X})=10.
\]
\begin{lemma}
\label{lemma: simplyconn}
Suppose that our ground field is $\mathbb C$.
Then any Horrocks-Mumford Calabi-Yau threefold is an abelian Calabi-Yau threefold in the strict sense, i.e., 
$\tilde{X}$ is simply connected.
\end{lemma}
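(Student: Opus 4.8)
The plan is to reduce the statement to the simple connectivity of the singular Horrocks--Mumford quintic $X \subset \mathbb{P}^4$, and then to prove the latter by a Lefschetz hyperplane argument.

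First I would exploit that $\tilde{X} \to X$ is a \emph{small} resolution of the quintic $X$, whose singular locus $X_{\mathrm{sing}}$ consists of the $100$ nodes: away from $X_{\mathrm{sing}}$ the map is an isomorphism, and over each node the exceptional fibre is a smooth rational curve $\mathbb{P}^1$, which is simply connected. I claim this forces $\pi_1(\tilde{X}) \cong \pi_1(X)$. Indeed, the exceptional locus $E \subset \tilde{X}$ has real codimension $4$, so $\pi_1(\tilde{X}) \cong \pi_1(\tilde{X} \setminus E)$, and $\tilde{X} \setminus E \cong X \setminus X_{\mathrm{sing}}$. On the other side, each node is an ordinary double point of a threefold, whose link is the simply connected manifold $S^2 \times S^3$; gluing back small (contractible) cone neighbourhoods of the nodes, van Kampen's theorem gives $\pi_1(X \setminus X_{\mathrm{sing}}) \cong \pi_1(X)$. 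Combining the two identifications yields $\pi_1(\tilde{X}) \cong \pi_1(X)$, so it suffices to prove that the nodal quintic threefold $X$ is simply connected.

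To prove $\pi_1(X)=1$ I would slice by a generic hyperplane. Since $X_{\mathrm{sing}}$ is a finite set of points, a generic hyperplane $H \subset \mathbb{P}^4$ avoids it, so by Bertini the section $Y := X \cap H$ is a \emph{smooth} quintic surface in $H \cong \mathbb{P}^3$; by the classical Lefschetz theorem for smooth hypersurfaces of dimension $\ge 2$, $Y$ is simply connected. On the other hand, because $X$ is a hypersurface, hence a local complete intersection, of dimension $3 \ge 2$, the Lefschetz hyperplane theorem for singular varieties (Hamm--L\^e; Goresky--MacPherson) applies to a hyperplane generic with respect to a Whitney stratification of $X$, and gives the surjectivity $\pi_1(Y) \twoheadrightarrow \pi_1(X)$. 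Since $\pi_1(Y) = 1$, we conclude $\pi_1(X) = 1$, and therefore $\pi_1(\tilde{X}) = 1$.

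The main point to justify carefully is the singular Lefschetz theorem, i.e.\ the surjectivity $\pi_1(X \cap H) \twoheadrightarrow \pi_1(X)$ in the presence of the nodes; this is exactly where the local complete intersection structure of $X$ enters, guaranteeing that its rectified homotopical depth equals $\dim X$, so that the relative homotopy groups $\pi_k(X, X\cap H)$ vanish for $k \le \dim X$. It is worth noting that one cannot shortcut the argument via the Lefschetz theorem for zero loci of \emph{ample} vector bundles applied to the description $\tilde{X} = Z(\mathbf{s}) \subset \mathbb{P}^4 \times \ell$, where $\mathbf{s}$ is the tautological section of $\mathcal{F} \boxtimes \mathcal{O}_\ell(1)$: the Horrocks--Mumford bundle $\mathcal{F}$ is \emph{not} ample, since its sections vanish along abelian surfaces $A_s$ with $\pi_1(A_s) = \mathbb{Z}^4 \neq 1$, which would contradict the conclusion of such a theorem. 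Thus the nodal geometry together with the small resolution is essential, and the route through $\pi_1(\tilde{X}) \cong \pi_1(X) = 1$ is the robust one.
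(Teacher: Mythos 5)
Your proof is correct and follows essentially the same route as the paper's: both transfer the problem from $\tilde{X}$ to the nodal quintic $X$ by combining the small-resolution structure with the triviality of the local fundamental groups of the ordinary double points (link $S^2\times S^3$, van Kampen), and both conclude $\pi_1(X)=1$ by a Lefschetz hyperplane theorem applied to the singular quintic. The only cosmetic differences are that the paper invokes Lefschetz for $X\subset\mathbb{P}^4$ directly while you pass through a smooth hyperplane section $X\cap H$ via the Hamm--L\^e/Goresky--MacPherson version, and that the paper's proof also records the verification that $\tilde{X}$ is an abelian Calabi--Yau threefold ($h^1(\mathcal{O})=0$, existence of sections), which you take as given from the construction.
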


\begin{proof}
Let $ \pi \colon \tilde{X} \to \ell= \mathbb P^1$ be the threefold constructed above. We claim that $\pi \colon \tilde{X} \to \mathbb P^1$  is an abelian Calabi-Yau threefold. Indeed, $h^1(\tilde{X}, \mathcal O) =0$ because of Lefschetz hyperplane theorem and the birational invariance of $h^1(\mathcal O)$. Also we have $\tilde{X}(S) \neq \emptyset $ because the $\mathbb P^1$'s over the $100$ nodes form sections. Finally for a general $s \in \ell = \mathbb P^1$, the fiber $A_s$ is an abelian surface of polarization type $(1, 5)$.

Let $X$ the corresponding Horrocks-Mumford quintic in $\mathbb P^4$.
It follows from the Lefschetz hyperplane theorem that $\pi_1(X) = 0$.
Each node in $X_{\mathrm{sing}}$ is analytically isomorphic to $\{xw-yz = 0\} \subset \mathbb A^4$,
so, in particular, the local fundamental group $\pi_1^{\mathrm{loc}}(X, x)$ of each node $x$ is zero.
It follows from the Seifert-van Kampen theorem that $\pi_1(X \setminus X_{\mathrm{sing}})=0$.
Let $L_i$ be the exceptional curves of the small resolution $\tilde{X} \rightarrow X$.
We have $\pi_1(\tilde{X} \setminus \cup L_i) = 0$.
Again, it follows from the Seifert-van Kampen theorem that $\pi_1(\tilde{X}) = 0$.
\end{proof}

\subsection{The (birational) automorphism group}
The possible degenerate surfaces one can obtain in the Horrocks-Mumford construction have been classified in \cite{BHM87}. We list the possibilities as follows:

\begin{theorem}{\cite[Theorem (0.1)]{BHM87}}
Each Horrocks-Mumford surface $A_s$ is one of the following:
\begin{enumerate}
\item a smooth abelian surface,

\item a translation scroll associated to a normal elliptic quintic curve,

\item the tangent scroll of a normal elliptic quintic curve,

\item a quintic elliptic scroll carrying a multiplicity-$2$ structure,

\item the union of five smooth quadric surfaces,

\item the union of five planes carrying a multiplicity-$2$ structure.
\end{enumerate}
\end{theorem}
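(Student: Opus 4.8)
The plan is to analyze the universal family of zero loci $A_s = Z(s) \subset \PP^4$ as the section $s$ ranges over the parameter space $\PP(H^0(\calF)) \isom \PP^3$, exploiting the Heisenberg symmetry throughout. Since $\calF$ carries an $H_5$-action, the zero scheme $Z(s)$ depends equivariantly on $s$, so the locus in $\PP^3$ where $A_s$ acquires a given degeneration is $H_5$-invariant. I would therefore stratify $\PP^3$ by the geometric type of $A_s$ and identify each stratum in turn.

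First I would invoke the original Horrocks--Mumford construction: for a general section $s$, the zero scheme $Z(s)$ is a smooth abelian surface of degree $10$ and polarization type $(1,5)$. This supplies item (1) and singles out the open dense stratum. Its complement is the degeneracy locus $\Delta \subset \PP^3$ where $A_s$ fails to be smooth; by equivariance $\Delta$ is $H_5$-invariant, and the task reduces to describing the surfaces lying over the strata of $\Delta$.

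The heart of the argument is the geometry of normal elliptic quintic curves $E \subset \PP^4$, out of which the degenerate surfaces are built. A \emph{translation scroll} is the union of the secant lines joining $p$ to $p+t$ as $p$ runs over $E$, for a fixed nonzero translation $t$; letting $t \to 0$ degenerates this to the \emph{tangent scroll} of $E$. These are the natural flat limits of $(1,5)$-polarized abelian surfaces and account for items (2) and (3). The remaining items arise at deeper strata: a non-reduced zero scheme supported on a quintic elliptic scroll gives the multiplicity-$2$ structure of item (4), while at the most special points, tied to the $25$-line base locus $\cup_{i,j} L_{i,j}$ and the residual $G = \Z_5 \times \Z_5$ action, the surface breaks into a union of five smooth quadrics (item 5) or degenerates into five planes carrying a multiplicity-$2$ structure (item 6). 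For each stratum I would pin down the scheme structure of $Z(s)$ using the explicit monad defining $\calF$, which controls the equations of $A_s$ as $s$ specializes.

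Finally I would establish \emph{completeness}: every section yields one of the six surfaces, and no further type occurs. This is the step I expect to be the main obstacle. Verifying that the stratification of $\Delta$ is exhaustive, and in particular analyzing the \emph{non-reduced} limits (the multiplicity-$2$ scroll and the five-plane configuration), requires studying the scheme structure of $Z(s)$ at the boundary rather than merely its support. Distinguishing these non-reduced cases and ruling out additional degenerations rests on a detailed, Heisenberg-equivariant computation with $H^0(\calF)$ and the defining data of the bundle; this is the bookkeeping carried out in \cite{BHM87}.
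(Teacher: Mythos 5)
The paper offers no proof of this statement: it is quoted verbatim from \cite[Theorem (0.1)]{BHM87} and used as a black box, so there is no internal argument to compare yours against. Judged on its own terms, your proposal is a sensible reconstruction of the \emph{shape} of the argument in that reference --- stratify the parameter space $\mathbb{P}(H^0(\mathcal{F}))\cong\mathbb{P}^3$ by the geometry of the zero scheme $Z(s)$, use $H_5$-equivariance to constrain the degeneracy locus, and recognize translation scrolls and tangent scrolls of normal elliptic quintics as the natural flat limits of $(1,5)$-polarized abelian surfaces --- but it is an outline, not a proof. The entire content of the theorem is the \emph{exhaustiveness} of the six-item list, i.e.\ that no other degenerate zero schemes occur. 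You correctly identify this as the main obstacle and then defer it back to ``the bookkeeping carried out in \cite{BHM87}''; that is circular, since that bookkeeping \emph{is} the theorem.

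Concretely, two things are missing. First, you give no mechanism forcing every singular $A_s$ to be supported on a quintic elliptic scroll or on an $H_5$-orbit of planes or quadrics; asserting that translation and tangent scrolls are ``the natural flat limits'' does not rule out other $H_5$-invariant degree-$10$ surfaces arising as zero loci, and establishing that requires the explicit monad presentation of $\mathcal{F}$ and a classification of the relevant invariant curves and surfaces in $\mathbb{P}^4$. Second, the non-reduced cases (4) and (6) demand an actual computation of the scheme structure of $Z(s)$ at the boundary strata, which you name but do not perform. Without these inputs the argument does not close; as written, the proposal is best read as an annotated pointer to the reference rather than an independent proof.
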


The degenerate fibers of type $(4)$ and $(6)$ in the above list cannot occur. Indeed, a section must intersect any closed fiber $\tilde{X}_s$ in a smooth point.
Also, if a fiber of type (5) occurs, then $\tilde{X}$ must be singular. Indeed, from the equation of the union of five smooth quadric surfaces in \cite[Section 3]{BHM87}, one can observe that the intersection of two quadrics in this singular fiber is the union of a line and a point.
This is impossible in a smooth threefold.

Let us denote by $\mathcal{A}$ the $\pi$-smooth locus of $\tilde{X}$. We observed in Theorem~\ref{theorem: Neronmodel} that $ \mathcal{A} \to S $ is a N\'eron model. This means that the smooth locus of degenerate fibers of type $(2)$, $(3)$ or $(5)$ acquires a group structure. 
In fact, $(2)$ yields semi-abelian surfaces of torus rank $1$.

Now we consider $ X \to S $, and, on restriction to its smooth locus, the N\'eron model $ \mathcal{A} \to S $. Any section $ P \in X(S) $ restricts to a section of $\mathcal{A}$ (since it must pass through the smooth locus), which we also denote $P$. It induces a morphism $ t_P \colon \mathcal{A} \to \mathcal{A} $ relative to $S$, which is translation in each fiber $\mathcal{A}_s$. On $X$, this yields a birational map $ f_P \colon X \dashrightarrow X $. 

\begin{prop}
The birational map $f_P$ is a regular automorphism of $X$.
\end{prop}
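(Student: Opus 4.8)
The plan is to show that $f_P$ and its inverse have indeterminacy only in codimension $2$, that $f_P$ is therefore a pseudo-automorphism, and finally that the explicit geometry of the degenerate fibres forbids any flop, so that $f_P$ is in fact biregular. First I would record the basic reductions. On $\mathcal{A} = \mathrm{Sm}(\pi)$ the map $f_P$ coincides with the group-scheme translation $t_P$, which is an isomorphism with inverse $t_{-P}$; hence $f_P$ is birational with $f_P^{-1} = f_{-P}$, and both restrict to automorphisms of $\mathcal{A}$. The complement $X \setminus \mathcal{A}$ is contained in the finitely many degenerate fibres and equals the union of their non-smooth loci; since the degenerate fibres are reduced (only types $(1)$, $(2)$, $(3)$ occur, as recalled above), this locus has dimension at most one, so it is of codimension $\ge 2$ in $X$. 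Thus $f_P$ and $f_{-P}$ are defined away from codimension $2$. Since $\pi \circ t_P = \pi$ on $\mathcal{A}$, we have $\pi \circ f_P = \pi$ as rational maps, so every indeterminacy curve of $f_P$ or $f_{-P}$ lies inside a single degenerate fibre.

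Next I would invoke the flop machinery. As $X$ is a smooth projective threefold with $\omega_X \cong \mathcal{O}_X$, hence $K_X$ nef, Lemma~\ref{lemma:flops} applies to $f_P \colon X \dashrightarrow X$: the map is an isomorphism in codimension $1$ and its indeterminacy locus is a union of rational curves. By the theorem of Kollár quoted after Lemma~\ref{lemma:flops} (see \cite{Kol89}), running the $K_X + \epsilon D$-minimal model program with $D = (f_{-P})_* D'$ for an ample divisor $D'$ decomposes $f_P$ into a finite sequence of $D$-flops, relative to $\pi$. Consequently $f_P$ is a regular automorphism if and only if this program needs no flop, equivalently if and only if $D = (f_{-P})_* D'$ is already nef, equivalently if and only if $f_P$ admits no flopping curve.

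The heart of the argument, and the step I expect to be hardest, is to rule out flopping curves using the explicit description of the fibres. By the classification of Horrocks--Mumford surfaces in \cite{BHM87} recalled above, together with the exclusion of types $(4),(5),(6)$, only fibres of type $(1)$, $(2)$ and $(3)$ occur; a flopping curve would be a rational curve inside the non-smooth locus of a type $(2)$ or $(3)$ fibre. On such a fibre $X_s$, the map $f_P$ restricts to translation by the specialization $P(s)$ on the smooth group scheme $\mathcal{A}_s$, so I would analyze how this translation acts on the boundary $X_s \setminus \mathcal{A}_s$ and on its neighbourhood in $X$. Using the small resolution $\tilde{X} \to X$ --- in particular that the $100$ exceptional curves form sections lying in $\mathcal{A}$ --- together with the semi-abelian (torus-rank $1$) structure on the type $(2)$ fibres, I would check that translation preserves this boundary and contracts no curve, so that $(f_{-P})_* D'$ remains nef. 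The main technical difficulty is precisely this local analysis at the degenerate fibres: one must verify that the translation extends across the codimension-$2$ locus \emph{without} a flop, rather than merely as a pseudo-automorphism. Granting this, no flop occurs; the identical argument applied to $f_{-P}$ then shows that both $f_P$ and $f_{-P}$ are morphisms, whence $f_P$ is a regular automorphism of $X$.
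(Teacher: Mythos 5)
There is a genuine gap: you assemble all the needed ingredients but then fail to combine them, and instead defer the decisive step to an unexecuted ``local analysis.'' By the end of your second and third paragraphs you have correctly established that (i) by Lemma~\ref{lemma:flops} the indeterminacy locus of $f_P$ is a union of \emph{rational} curves, and (ii) since $f_P$ is regular on $\mathcal{A}$, any indeterminacy curve must lie in the non-smooth locus of a degenerate fibre of type $(2)$ or $(3)$. The one observation you are missing is that these non-smooth loci are \emph{normal elliptic quintic curves} (the type $(2)$ fibre is a translation scroll and the type $(3)$ fibre is the tangent scroll of such a curve, each singular exactly along it). An elliptic curve contains no rational curve, so no indeterminacy curve can exist, the indeterminacy locus is empty, and $f_P$ (and likewise $f_{-P}$) is a morphism. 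This is precisely the paper's proof, and it is complete in three lines.

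Instead of drawing this conclusion, you declare the ``heart of the argument'' to be a verification that the translation ``preserves the boundary and contracts no curve,'' so that $(f_{-P})_*D'$ stays nef, and you explicitly flag this as the hardest step without carrying it out. As written, that step is a plan, not a proof: you give no argument for why the strict transform of an ample divisor remains nef near the type $(2)$/$(3)$ fibres, and the route via nefness of $(f_{-P})_*D'$ is both harder than necessary and not obviously tractable by the local considerations you sketch. Replace that entire paragraph with the rationality-versus-elliptic-curve observation and your argument closes.
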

\begin{proof}

It follows from Lemma~\ref{lemma:flops} that the indeterminacy of $f_P$ is the union of rational curves.
On the other hand, since $f_P$ is regular on $\mathcal{A}$, the possible indeterminacy is along the singular loci of singular fibers. 
The only possible singular fibers are (2) or (3), but these are singular along the normal quintic elliptic curve. Thus our assertion follows.
\end{proof}

The subgroup of the regular automorphism group given by translations of the abelian fibrations is computed in \cite[Corollary 3.6]{Bor91}. 
Indeed, Borcea computed this group by constructing generators of this group and computing the nef cone of divisors.
\begin{cor}
Let $\eta \in S$ be the generic point and $\mathcal{A}_\eta$ be the generic fiber.
Then the Mordell-Weil group $\mathrm{MW}(\mathcal{A}_\eta) = \tilde{X}(S)$ is isomorphic to $\mathbb Z^2 \times \mathbb Z_5 \times \mathbb Z_5$.
\end{cor}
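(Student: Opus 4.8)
The plan is to identify the Mordell-Weil group $\mathrm{MW}(\mathcal{A}_\eta)$ with the explicit subgroup of $\mathrm{Aut}(\tilde{X})$ consisting of fiberwise translations, and then to read off its structure from Borcea's computation. First I would record the chain of identifications
$$\mathrm{MW}(\mathcal{A}_\eta) = \mathcal{A}_\eta(k(S)) = \mathcal{A}(S) = \tilde{X}(S).$$
The middle equality is the N\'eron mapping property combined with Theorem~\ref{theorem: Neronmodel}, which tells us that the $\pi$-smooth locus $\mathcal{A}$ is the N\'eron model of $\mathcal{A}_\eta$; the last equality holds because any section of $\pi$ must meet every closed fiber in a smooth point, and hence factors through $\mathcal{A}$.

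Next I would upgrade this to an isomorphism of abstract groups onto a subgroup of the \emph{regular} automorphism group of $\tilde X$. By the Proposition preceding this statement, each $P \in \tilde{X}(S)$ induces a regular automorphism $f_P = t_P$, namely fiberwise translation by $P$; since translations compose additively one has $f_{P+Q} = f_P \circ f_Q$, so $P \mapsto f_P$ is a group homomorphism. It is injective because $f_P = f_Q$ forces $P - Q$ to be the zero section on the generic fiber. Its image is the subgroup $T \subset \mathrm{Aut}(\tilde{X})$ of those automorphisms that act as a translation in each fiber of $\pi$; conversely, any element of $T$ restricts to a translation of $\mathcal{A}_\eta$ over $k(S)$, hence to a point of $\mathcal{A}_\eta(k(S)) = \tilde X(S)$, and so equals some $f_P$. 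Thus $P \mapsto f_P$ is an isomorphism $\mathrm{MW}(\mathcal{A}_\eta) \xrightarrow{\sim} T$.

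Finally I would import Borcea's result. In \cite[Corollary 3.6]{Bor91} the group $T$ of fiberwise translations is computed, by exhibiting explicit generators and analysing the nef cone of divisors on $\tilde{X}$, to be $\mathbb{Z}^2 \times \mathbb{Z}_5 \times \mathbb{Z}_5$; here the torsion part $\mathbb{Z}_5 \times \mathbb{Z}_5 = G$ arises from the Heisenberg translations preserving the hyperplane class introduced earlier, while the free rank-$2$ part comes from Borcea's generators. Matching $T$ with this group yields $\mathrm{MW}(\mathcal{A}_\eta) \cong \mathbb{Z}^2 \times \mathbb{Z}_5 \times \mathbb{Z}_5$, as desired. (That the free rank is finite is in any case guaranteed by Lang--N\'eron, Theorem~\ref{Lang-N} and Remark~\ref{simplicity function fields}, though Borcea's argument gives the exact value.)

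The main obstacle is precisely the passage from sections to \emph{honest} automorphisms, and this is where the N\'eron-model description is indispensable: without knowing that sections lie in the smooth locus and that translation is regular there, combined with Lemma~\ref{lemma:flops} to control the indeterminacy, one would only produce birational self-maps of $\tilde{X}$, to which Borcea's automorphism-theoretic computation does not directly apply. The delicate verification is that $T$ is exactly the image of $P \mapsto f_P$ -- equivalently, that every fiberwise-translation automorphism descends from a genuine section and that Borcea's generators generate all of $T$ rather than a proper subgroup -- and this rests on the explicit nef-cone geometry carried out in \cite{Bor91}.
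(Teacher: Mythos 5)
Your proposal is correct and follows essentially the same route as the paper: identify $\mathrm{MW}(\mathcal{A}_\eta)$ with $\tilde{X}(S)$ via the N\'eron-model description, use the preceding Proposition to realize each section as a regular fiberwise-translation automorphism of $\tilde{X}$, and then quote Borcea's computation of that translation subgroup in \cite[Corollary 3.6]{Bor91}. The paper's proof is in fact terser (it is essentially just the citation to Borcea together with the remark that he proceeds by exhibiting generators and analysing the nef cone), so your explicit spelling-out of the chain of identifications is a faithful elaboration rather than a different argument.
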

\subsection{The density of rational points}

Let $M_y(x) =[x_{i+j}y_{i-j}]_{0\leq i, j \leq 4}$, \textit{i.e.}
\[
M_y(x) =
 \begin{pmatrix}
 x_0y_0&x_1y_4 & x_2y_3&x_3y_2&x_4y_1\\
 x_1y_1&x_2y_0&x_3y_4&x_4y_3&x_0y_2\\
 x_2y_2 & x_3y_1 & x_4y_0&x_0y_4&x_1y_3\\
 x_3y_3& x_4y_2&x_0y_1&x_1y_0&x_2y_4\\
 x_4y_4&x_0y_3&x_1y_2&x_2y_1&x_3y_0
 \end{pmatrix}.
\]
Ross Moore showed in \cite{Mo85} that any Horrocks-Mumford quintic $X$ is determinantal and it is defined by
$\det (M_y(x))=0$ where $y \in \mathbb P^4$ is any node of $X$, see also \cite[p.~26]{Bor91}.
We can choose two nodes $y_1, y_2$ so that the $100$ nodes on $X$ are given by $\langle G, \iota \rangle \{y_1, y_2\}$. Note that $\langle G, \iota \rangle y_1$ consists of $50$ distinct points so $100$ points are formed by two orbits of $\langle G, \iota \rangle$. 
Let $N$ be a general $4\times 5$ matrix and define surfaces
\[
\Delta_i = \{x \in \mathbb P^4 \mid \mathrm{rank} (NM_{y_i}(x)) = 3\}.
\]
Let $H$ be the pullback of the hyperplane class on $\tilde{X}$ and $A$ a general fiber of the abelian fibration $\pi : \tilde{X} \rightarrow S$. 
It is shown in \cite{Aur89} that $H, A, \tilde{\Delta}_1, \tilde{\Delta}_2$ form a basis for $\mathrm{Pic}(\tilde{X})$ where $ \tilde{\Delta}_1, \tilde{\Delta}_2$ are the strict transformations of  $\Delta_1, \Delta_2$.

\begin{prop}
Let $\eta \in S$ be the generic point and $A_\eta$ the generic fiber.
Then $A_\eta$ is simple.
\end{prop}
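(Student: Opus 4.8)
The plan is to prove that the generic fiber $A_\eta$ of the Horrocks-Mumford abelian fibration $\pi \colon \tilde{X} \to S = \mathbb{P}^1$ is a simple abelian surface over the function field $k(S)$. I would argue by contradiction: if $A_\eta$ were not simple, then it would contain a proper nontrivial abelian subvariety, hence an elliptic curve $E_\eta \subset A_\eta$ defined over a finite extension of $k(S)$. The strategy is to show that the existence of such an elliptic subvariety forces a geometric structure on $\tilde{X}$ — namely an additional elliptic fibration or a corresponding divisor class — that is incompatible with the known Picard lattice of $\tilde{X}$, computed via the Aurich basis $H, A, \tilde{\Delta}_1, \tilde{\Delta}_2$ together with the intersection-theoretic data recorded above.

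Concretely, I would first translate the non-simplicity into divisor-theoretic data. A one-parameter family of elliptic curves $E_\eta \subset A_\eta$ sweeps out a divisor $\tilde E$ on $\tilde{X}$ that is dominant over $S$; taking the complementary abelian subvariety (the isogeny $E_\eta \times E'_\eta \to A_\eta$) produces a second elliptic pencil and hence a surface class that is movable of Iitaka dimension one and distinct from the fiber class $A$. I would then compute the numerical class of $\tilde E$ in the basis $H, A, \tilde{\Delta}_1, \tilde{\Delta}_2$ and examine the self-intersection and intersection numbers this class must satisfy: an elliptic curve inside an abelian surface has trivial self-intersection in the fiber, and its normalization as an elliptic fibration forces specific relations $\tilde E^2 \cdot A = 0$ and $\tilde E \cdot A^2 = 0$ constrained by the polarization type $(1,5)$. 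The degree-$10$ polarization and the fact that $A_s$ carries a $(1,5)$-polarization restrict which effective classes can restrict to an elliptic curve of the appropriate degree on the generic fiber.

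The key input I would exploit is the action of the group $G = \mathbb{Z}_5 \times \mathbb{Z}_5$ and the involution $\iota$ on the fibration, together with the Mordell-Weil computation giving $\mathrm{MW}(\mathcal{A}_\eta) \cong \mathbb{Z}^2 \times \mathbb{Z}_5 \times \mathbb{Z}_5$. Any elliptic subvariety $E_\eta$, being a subgroup, must interact with the $5$-torsion structure and with the translations coming from $G$; a splitting $A_\eta \sim E_\eta \times E'_\eta$ would decompose the Mordell-Weil group and the $5$-torsion in a way that should be visible in, or contradicted by, the explicit automorphism group. I would also use that the generic fiber being a $(1,5)$-polarized surface with full level structure is well understood: the Horrocks-Mumford family is essentially the universal family over a modular threefold, and a generic member of such a family is simple because the locus of abelian surfaces admitting a splitting (the locus of products/isogenies to products) is a proper Noether-Lefschetz-type sublocus.

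The main obstacle will be converting the ``generic member is simple'' heuristic into a rigorous statement over the specific one-dimensional base $S = \ell$ rather than over the full moduli space. A priori the chosen pencil $\ell \subset \mathbb{P}(H^0(\mathcal{F}))$ could lie entirely inside the non-simple (Noether-Lefschetz) locus, so genericity of the pencil alone is not automatically inherited from genericity of a single surface. The cleanest way around this is the Picard-rank argument: I expect to show that a splitting of $A_\eta$ would increase the Picard rank of $\tilde{X}$ beyond the value $4$ established by Aurich, or would produce a divisor class whose intersection numbers against $H, A, \tilde{\Delta}_1, \tilde{\Delta}_2$ have no integral solution consistent with the relations $H^3 = 2$, $H^2 A = 8$, $A^2 = 0$. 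Pinning down exactly which numerical relation is violated — and verifying that the hypothetical elliptic divisor genuinely contributes an independent Picard class rather than being numerically proportional to an existing one — is the step I anticipate requiring the most care.
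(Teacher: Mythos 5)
There is a genuine gap at exactly the step you flag as ``requiring the most care,'' and the way you propose to close it would not work. Your fallback is to show that a splitting of $A_\eta$ forces the Picard rank of $\tilde{X}$ above $4$; but as you yourself observe, the divisor swept out by the elliptic subvariety has no reason to be numerically independent of $H, A, \tilde{\Delta}_1, \tilde{\Delta}_2$, so no contradiction with $\rho(\tilde{X})=4$ is obtained. The missing idea is to work with the \emph{restriction} map $\mathrm{NS}(\tilde{X}) \rightarrow \mathrm{NS}(A_\eta)$ rather than with $\mathrm{NS}(\tilde{X})$ itself. The paper expands Borcea's cubic form $(\beta_0A + \beta_1 H + \beta_2 \tilde{\Delta}_i)^3$ to extract $H^2\cdot A = 10$, $H\cdot \tilde{\Delta}_i \cdot A = 20$, $\tilde{\Delta}_i^2 \cdot A = 40$; since $10\cdot 40 - 20^2 = 0$, the Gram matrix of $H|_{A_\eta}$ and $\tilde{\Delta}_i|_{A_\eta}$ is degenerate, so these restrictions are numerically proportional, and since $A$ restricts trivially the image of the restriction map has rank one. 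Because every divisor class on $A_\eta$ spreads out to a divisor on $\tilde{X}$, this pins down $\mathrm{NS}(A_\eta)$ itself, and a non-simple abelian surface carries two elliptic curve classes with zero self-intersection and positive mutual intersection, hence a rank-$\geq 2$ N\'eron--Severi group --- the contradiction. Your proposal never identifies this degeneracy, which is the whole content of the proof; without it, none of the numerical relations you list ($\tilde{E}^2\cdot A = 0$, $\tilde{E}\cdot A^2 = 0$) nor the Mordell--Weil and Heisenberg symmetry considerations yields a contradiction.

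Two smaller points. The intersection numbers you quote, $H^3 = 2$, $H^2A = 8$, $A^2 = 0$, belong to the type $(1,4)$ threefold $V^1_{4,l}$ of the previous section, not to the Horrocks--Mumford quintic (where $H^3 = 5$ and $H^2 A = 10$); the relevant data here is Borcea's formula. And your appeal to the generic simplicity of $(1,5)$-polarized surfaces in the universal family is, as you note, not inherited by a particular pencil $\ell$, so it cannot substitute for the lattice computation.
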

\begin{proof}
Using \cite[Proposition 3.1]{Bor91}, one has
\[
(\beta_0A + \beta_1 H + \beta_2 \tilde{\Delta}_i)^3 = 5(\beta_1 + 2 \beta_2)(\beta_1^2 + 4 \beta_2^2 + 6\beta_0\beta_1 + 12\beta_0\beta_2 + 4 \beta_1\beta_2).
\]
Expanding polynomials in $\beta_0, \beta_1, \beta_2$, one sees $H^2\cdot A = 10, H\cdot \tilde{\Delta}_i \cdot A = 20, \tilde{\Delta}_i^2 \cdot A = 40$.
This implies that the intersection matrix of the restrictions of $H$ and $\tilde{\Delta}_i$ to $A_\eta$ is degenerate, hence these restrictions are linearly dependent.
In particular, the image of the restriction map $\mathrm{NS}(X) \rightarrow \mathrm{NS}(A_\eta)$ has rank one. However, if $A_\eta$ is not simple over the function field $k(\eta)$, then it contains two elliptic curves meeting with a positive intersection number. Thus the image $\mathrm{NS}(X) \rightarrow \mathrm{NS}(A_\eta)$ must have rank $\geq 2$. It is a contradiction, hence $A_\eta$ has to be simple over the function field $k(\eta)$. 
\end{proof}

\begin{cor}
The set $\tilde{X}(S)$ of sections is Zariski dense in $X$.
\end{cor}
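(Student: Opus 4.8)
The plan is to combine the structural results already established—simplicity of the generic fiber, the explicit Mordell-Weil group, and the specialization lemma—to produce a single section whose translates are Zariski dense. First I would invoke the corollary giving $\mathrm{MW}(\mathcal A_\eta) = \tilde X(S) \cong \Z^2 \times \Z_5 \times \Z_5$; the key point is that the rank is $2 \geq 1$, so there exists a section $P$ of infinite order in the Mordell-Weil group. Since the preceding proposition shows that $A_\eta$ is simple over $k(\eta)$, Remark \ref{simplicity function fields} applies: for a simple abelian variety over the function field, positivity of the Mordell-Weil rank is equivalent to $\mathcal A_\eta(k(\eta))$ being Zariski dense in $A_\eta$. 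Hence the subgroup generated by $P$ (or the full group of sections) is already Zariski dense in the generic fiber.

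Next I would transfer this density from the generic fiber to the total space via the specialization machinery of Lemma \ref{lemma: specialization}. The fibration $\pi \colon \tilde X \to S = \PP^1$ is projective and flat, $\tilde X$ is geometrically irreducible, and the generic fiber is an abelian surface, hence geometrically irreducible, so the hypotheses of the lemma are met. Taking $\mathcal S = \tilde X(S)$ to be the set of sections, the content of Zariski density of $\mathcal A_\eta(k(\eta))$ in $A_\eta$ is precisely that, at a general closed point $s \in S(k)$, the image of $\mathcal S$ under the specialization map $\mathcal S \to \tilde X_s(k)$ is Zariski dense in the fiber $\tilde X_s$. Applying Lemma \ref{lemma: specialization} then yields that the union of these sections is Zariski dense in $\tilde X$.

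The one point requiring care is the translation between the two formulations of density: the proposition and Remark \ref{simplicity function fields} give density \emph{in the generic fiber} $A_\eta$ over the function field, whereas the specialization lemma wants density of the \emph{specialized} points in a single closed fiber $\tilde X_s(\overline k)$. These are compatible because a section that is Zariski dense generically must specialize to a dense set of points in a very general fiber—the exceptional fibers where specialization could drop dimension form a proper closed (countable) subset of the base, so one chooses $s$ avoiding them. I expect this compatibility check to be the main obstacle, but it is mild: one can either appeal directly to the fact that the closure of the set of sections in $\tilde X$ dominates $S$ with generic fiber the Zariski closure in $A_\eta$, or note that over the uncountable algebraically closed ground field a very general fiber realizes the generic behavior. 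Once the density at one good closed point is secured, Lemma \ref{lemma: specialization} closes the argument, giving Zariski density of $\tilde X(S)$ in $X$.
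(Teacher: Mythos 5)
Your first step coincides with the paper's: the rank-$2$ Mordell--Weil group provides a section of infinite order, and simplicity of $A_\eta$ (via Remark~\ref{simplicity function fields}, or equivalently the paper's observation that the closure of $A_\eta(k(\eta))$ is a subgroup scheme that is neither finite nor one-dimensional) forces $A_\eta(k(\eta))$ to be Zariski dense in $A_\eta$. The divergence, and the gap, is in how you pass from the generic fiber to the total space. The paper does this in one line: the closure of $\tilde{X}(S)$ in $\tilde{X}$ contains the closure of $A_\eta(k(\eta))$, i.e.\ contains $A_\eta$, and the closure of the generic fiber in the irreducible threefold $\tilde{X}$ is $\tilde{X}$ itself. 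No specialization is needed. You instead route the argument through Lemma~\ref{lemma: specialization}, which requires exhibiting a \emph{closed} point $s$ at which the specialized sections are dense in $\tilde{X}_s$. That hypothesis does not follow ``precisely'' from density in the generic fiber: it is the harder implication, and your justification for it fails as stated. The ground field in this section is only assumed algebraically closed of characteristic zero, so it may be $\overline{\mathbb{Q}}$ and the ``very general fiber over an uncountable field'' argument is unavailable; moreover, even granting uncountability, you would need to know that the locus of bad fibers (where an infinite-order section specializes to a torsion point, or where the closure of the specialized subgroup is a proper subgroup because $A_s$ fails to be simple) is a countable union of proper closed subsets, and you supply no argument for this. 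In short, Lemma~\ref{lemma: specialization} transfers density \emph{from} a closed fiber \emph{to} the total space, whereas what you have is density at the generic fiber, which already gives density in the total space directly.

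You do mention the correct repair in passing: ``appeal directly to the fact that the closure of the set of sections in $\tilde X$ dominates $S$ with generic fiber the Zariski closure in $A_\eta$.'' That is exactly the paper's proof, and if you promote that parenthetical remark to the main argument and delete the detour through Lemma~\ref{lemma: specialization}, the proof is complete and correct.
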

\begin{proof}
The Zariski closure of $\tilde{X}(S)$ contains the closure of $A_\eta(k(\eta))$.
The Zariski closure of $A_\eta(k(\eta))$ in $A_\eta$ forms a sub-group scheme $\mathcal G$ of $A_\eta$,
and it cannot be a finite set because there is a section of infinite order. On the other hand, $A_\eta$ is simple so $\mathcal G$ cannot be $1$-dimensional. Thus the only possibility is that $\mathcal G = A_\eta$. This means that the Zariski closure of $A_\eta(k(\eta))$ in $\tilde{X}$ is $\tilde{X}$ because of the irreducibility of $\tilde{X}$. Our assertion follows.
\end{proof}

\begin{cor}
Suppose that our ground field is a number field $k$ containing a $5$-th root of unity and $y \in \mathbb P^4(k)$ defines a Horrocks-Mumford quintic $X$ defined by $\det(M_y(x))=0$.
Suppose that its small resolution $\tilde{X}$ is smooth so that $\tilde{X}$ is a Horrocks-Mumford Calabi-Yau threefold.
Then the set  $X(k)$ of rational points is Zariski dense in $X$.
\end{cor}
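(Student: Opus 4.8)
The plan is to reduce the statement to the Zariski density of $k$-rational \emph{sections} of $\pi \colon \tilde{X} \to S = \mathbb{P}^1$, and to produce enough such sections by combining the explicit nodes of $X$ with the simplicity of the generic fibre established above. I work throughout over the number field $k$. Since $k$ contains a primitive $5$-th root of unity, the Heisenberg group $G$ and the involution $\iota$ act on $\mathbb{P}^4$ over $k$; together with $y \in \mathbb{P}^4(k)$ this shows that the quintic $X = \{\det(M_y(x)) = 0\}$, its small resolution $\tilde{X}$, the fibration $\pi$, and the birational morphism $\tilde{X} \to X$ are all defined over $k$.

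The crux is to show that the Mordell–Weil rank of $A_\eta$ over the function field $k(\eta) = k(S)$ is \emph{positive}; a priori the geometric rank $2$ could drop over $k$, and ruling this out is the main obstacle. To do so I would use that the $\langle G, \iota\rangle$-orbit of $y$ consists of $50$ distinct $k$-rational nodes. The corresponding exceptional curves of $\tilde{X} \to X$ are sections of $\pi$ (as recalled in the construction of $\tilde{X}$), and they are defined over $k$ because each node is; thus they give $50$ distinct $k$-rational sections, i.e.\ $50$ distinct elements of $\mathrm{MW}(A_\eta)$ realized over $k$. Since the torsion subgroup $\mathrm{MW}(A_\eta)_{\mathrm{tors}} \cong \mathbb{Z}_5 \times \mathbb{Z}_5$ has only $25$ elements, at least $25$ of these sections are non-torsion, so $\rank_{k(\eta)} \mathrm{MW}(A_\eta) \geq 1$.

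Next I would invoke simplicity. By the Proposition above (proved over an algebraically closed field), $A_\eta$ is simple over $\overline{k}(\eta)$, hence a fortiori simple over the subfield $k(\eta)$. As $k(\eta) = k(t)$ is a regular extension of the number field $k$, Remark~\ref{simplicity function fields} applies: a simple abelian variety over $k(\eta)$ of positive Mordell–Weil rank has Zariski dense rational points, so $A_\eta(k(\eta))$ is Zariski dense in $A_\eta$. Translating back to $\tilde{X}$, let $Z$ be the Zariski closure of the union of all $k$-rational sections. Its generic fibre $Z_\eta$ contains the dense set $A_\eta(k(\eta))$ and therefore equals $A_\eta$; since $\tilde{X}$ is irreducible of dimension $3$, this forces $Z = \tilde{X}$.

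Finally I would extract $k$-points. Each $k$-rational section $\sigma \colon \mathbb{P}^1 \to \tilde{X}$ satisfies $\sigma(t) \in \tilde{X}(k)$ for all $t \in \mathbb{P}^1(k)$, and $\mathbb{P}^1(k)$ is Zariski dense in $\mathbb{P}^1$, so $\sigma(\mathbb{P}^1(k))$ is dense in the curve $\sigma(\mathbb{P}^1)$. Taking the union over all $k$-sections and using $Z = \tilde{X}$, the set $\tilde{X}(k)$ is Zariski dense in $\tilde{X}$; pushing forward along the birational $k$-morphism $\tilde{X} \to X$ then yields density of $X(k)$ in $X$. As an alternative to the last two paragraphs, once one infinite-order $k$-section $P$ is in hand one may feed $\mathcal{S} = \{nP\}_{n \in \mathbb{Z}}$ into Lemma~\ref{lemma: specialization}, since for general $s \in \mathbb{P}^1(k)$ the simplicity of $A_s$ forces the infinite-order point $P(s)$ to generate a Zariski dense subgroup of $A_s$.
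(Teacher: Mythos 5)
Your proposal is correct and follows essentially the same route as the paper: use the $50$ $k$-rational nodes to produce $50$ $k$-sections, compare with the $25$-element torsion subgroup to find a section of infinite order, invoke simplicity of $A_\eta$ together with Remark~\ref{simplicity function fields} to get density of $A_\eta(k(\eta))$ in the generic fibre, and conclude density in $X$. Your write-up is slightly more explicit than the paper's in the last step (passing from dense $k$-sections to dense $k$-points via $\mathbb{P}^1(k)$), but the argument is the same.
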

\begin{proof}
First observe that the pencil of sections passing through $y$ is defined over $k$, so the resulting Calabi-Yau threefold $\tilde{X}$ is also defined over $k$.
Next note that $50$ of the nodes of $X$ are given by $\langle G, \iota \rangle y$ which are defined over $k$ since $k$ contains a $5$-th root of unity.
So, in particular, $\tilde{X}$ contains at least $50$ sections defined over the ground field $k$.
As we have seen, the torsion part of $\mathrm{MW}(A_\eta)$ has order $25$,
so there must be a section of infinite order defined over $k$.
By virtue of Remark \ref{simplicity function fields}, the simplicity of $A_\eta$ implies that the closure of $\mathrm{MW}(A_{\eta})$ inside $A_{\eta}$ is $A_{\eta}$. As the closure of the generic fiber $A_{\eta}$ of $X$ is $X$, this concludes our assertion.
\end{proof}

\section{The complete intersections of two cubics (type $(1,6)$)}
\label{(1,6)}
In this section, we deal with abelian Calabi-Yau threefolds fibered by abelian surfaces of polarization type $(1,6)$ discussed in \cite[Section 4]{GrPo01}.

We consider the Heisenberg group $H_6 \subset \mathrm{GL}_6(k) = \mathrm{GL}(V_6)$ generated by two elements
\[
\sigma (x_i) = x_{i-1}, \quad \tau (x_i) = \zeta_6^{-i}x_i
\]
where $\zeta_6$ is a primitive $6$-th root of unity.
We consider $\mathbb P^5 = \mathbb (V_6)$ and the action of $H_6$ on $\mathbb P_5$.
We consider the subgroup $H' \subset H_6$ generated by $\sigma^2, \tau^2$ and look at $H^0(\mathbb P^5, \mathcal O(3))^{H'}$.
It has a basis $f_0, \cdots f_3, \sigma f_0, \cdots \sigma f_3$ where
\begin{align*}
&f_0 = x_0^3 + x_2^3 + x_4^3,\\
&f_1 = x_1^2x_4 + x_3^2x_0 + x_5^2x_2,\\
&f_2 = x_1x_2x_3+x_3x_4x_5+x_5x_0x_1,\\
&f_3 = x_0x_2x_4.
\end{align*}
The vector space $H^0(\mathcal O(3))^{H'}$ is a representation of $H_6$ which is the direct sum of four copies of an irreducible two dimensional representation:
\[
H^0(\mathcal O(3))^{H'} = \oplus_{i = 0}^3 \langle f_i, \sigma f_i \rangle.
\]
We denote this irreducible representation by $V_0$.
Let $W$ be a four dimensional vector space with a basis $e_0, \cdots e_3$ such that
$V_0 \otimes \langle e_i \rangle = \langle f_i, \sigma f_i \rangle$.

For a point $p\in \mathbb P (W)$ corresponding to a one dimensional vector space $T \subset W$,
let $V_{6,p}$ be the complete intersection of type $(3,3)$ defined by cubics in $V_0 \otimes T \subset H^0(\mathcal O(3))^{H'}$. It is easy to see that this complete intersection is invariant under the action of $H_6$.
\begin{thm}{\cite[Theorem 4.10]{GrPo01}}
For a general $p\in \mathbb P(W)$, we have
\begin{enumerate}
\item $V_{6,p}$ is an irreducible threefold with $72$ ordinary double points;
\item There is a small resolution $V^1_{6,p} \rightarrow V_{6,p}$ of the ordinary double points
such that $V^1_{6,p}$ is an abelian Calabi-Yau threefold fibered by abelian surfaces of polarization type $(1,6)$, and it has Picard rank $6$.
\end{enumerate}

\end{thm}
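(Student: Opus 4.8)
Since this is \cite[Theorem 4.10]{GrPo01}, the natural plan is to follow the incidence-variety strategy already used for the type $(1,4)$ and type $(1,5)$ families in Sections \ref{(1,4)} and \ref{Horrocks}: realize the asserted small resolution $V^1_{6,p}$ as a variety in $\mathbb{P}^5 \times \mathbb{P}^1$ carrying both the abelian fibration and the resolution map. Concretely, I would first produce a one-parameter family $\{A_t\}_{t \in \mathbb{P}^1}$ of $H_6$-invariant abelian surfaces of type $(1,6)$ whose union is $V_{6,p}$, set
$$ \mathcal{V}_{6,p} = \{ (x,t) \in \mathbb{P}^5 \times \mathbb{P}^1 : x \in A_t \}, $$
and take $V^1_{6,p}$ to be $\mathcal{V}_{6,p}$ (or its normalization), exactly as $V^1_{4,l}$ was obtained from $X_l$. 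The second projection is then the fibration by abelian surfaces of type $(1,6)$, while the first projection $\mathcal{V}_{6,p} \to V_{6,p}$ is the candidate small resolution, whose exceptional curves lie over the points belonging to every member of the family.

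For part (1), irreducibility is built into the construction: $V_{6,p}$ is the image of the incidence variety $\mathcal{V}_{6,p}$, which fibres over the irreducible base $\mathbb{P}^1$ with irreducible fibres $A_t$ and is therefore itself irreducible. I would then check that for general $p$ the two cubics spanning $V_0 \otimes T$ form a regular sequence, so that $V_{6,p}$ is a three-dimensional complete intersection. The singular locus is cut out by the $2 \times 2$ minors of the Jacobian of the two cubics, and the essential simplification is that $V_{6,p}$ is $H_6$-invariant, so its singular points fall into $H_6$-orbits. Geometrically these are precisely the points lying on every member of the family $\{A_t\}$; locating them explicitly and summing the orbit sizes should yield the count $72$. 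At each such point I would compute the quadratic part of a local equation and verify that it has rank $4$, i.e.\ that the point is an ordinary double point, checking separately that for general $p$ no further or worse singularities occur. This explicit singularity analysis is the heart of the matter and the step I expect to be most delicate.

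For part (2), the Calabi-Yau property is the easy half. Adjunction on a complete intersection of two cubics in $\mathbb{P}^5$ gives $\omega_{V_{6,p}} \cong \mathcal{O}(-6+3+3) = \mathcal{O}$; since small resolutions of ordinary double points are crepant, $\omega_{V^1_{6,p}} \cong \mathcal{O}$, and $h^1(\mathcal{O}) = 0$ follows from the Lefschetz hyperplane theorem for complete intersections together with the birational invariance of $h^1(\mathcal{O})$. Smoothness of $V^1_{6,p}$ for general $p$ I would obtain by a Bertini-type argument applied to the total incidence variety and its restriction to a general pencil, in direct analogy with the Horrocks-Mumford construction; the fact that each node is resolved within the incidence variety is precisely what allows the $72$ nodes to be resolved simultaneously and projectively, and it identifies the exceptional curves as sections of the fibration. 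That the general fibre is a smooth abelian surface of type $(1,6)$ is then read off from the construction of the family $\{A_t\}$.

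Finally, for the Picard rank I would combine the fibration $V^1_{6,p} \to \mathbb{P}^1$ with the exceptional curves of the small resolution. A Shioda--Tate type analysis expresses the Picard rank in terms of the N\'eron--Severi rank of the generic fibre, the contributions of the degenerate fibres, and the classes of the exceptional curves over the $72$ nodes, and matching this against the explicit $H_6$-orbit decomposition of the nodes should produce the expected value $6$. As all of these downstream steps---the existence of the small resolution, the Calabi-Yau property, and the Picard rank count---rest on the precise singularity analysis of part (1), that analysis is the principal obstacle; it is exactly the computation carried out by Gross and Popescu, whose conclusion we are entitled to quote.
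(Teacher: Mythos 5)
The paper does not prove this statement: it is quoted verbatim from \cite[Theorem 4.10]{GrPo01} and used as an external input, exactly as you do at the end of your outline. Your sketch is a reasonable reconstruction of the Gross--Popescu strategy (incidence variety over the pencil, adjunction for the Calabi--Yau property, explicit $H_6$-orbit analysis of the nodes), but since both you and the paper ultimately defer the substantive computations --- the location and count of the $72$ ordinary double points and the Picard rank $6$ --- to the cited reference, your treatment matches the paper's.
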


Following Lemma~\ref{lemma: simplyconn}, it is easy to verify the following proposition:
\begin{prop}
Suppose that the ground field is $\mathbb C$. Then $V^1_{6,p}$ is a Calabi-Yau threefold in the strict sense.
\end{prop}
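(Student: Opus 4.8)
The plan is to follow the template established by Lemma~\ref{lemma: simplyconn}, which handled the analogous simple-connectedness statement for Horrocks-Mumford Calabi-Yau threefolds, and to adapt its argument to the present setting of $V^1_{6,p}$. Recall that to qualify as a Calabi-Yau threefold in the strict sense, $V^1_{6,p}$ must be (i) a Calabi-Yau threefold, which is already granted by the cited \cite[Theorem 4.10]{GrPo01} giving that $V^1_{6,p}$ is an abelian Calabi-Yau threefold, and (ii) simply connected. So the entire content of the proof is establishing $\pi_1(V^1_{6,p}) = 0$, and the strategy mirrors \emph{verbatim} the Seifert-van Kampen bookkeeping used before.

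First I would invoke the Lefschetz hyperplane theorem: since $V_{6,p}$ is a complete intersection of two cubics in $\mathbb P^5$, its smooth locus inherits triviality of $\pi_1$ from the ambient projective space, giving $\pi_1(V_{6,p}) = 0$. Next I would use the structure of the singularities: by \cite[Theorem 4.10]{GrPo01} the singular locus $V_{6,p,\mathrm{sing}}$ consists of exactly $72$ ordinary double points (nodes), each analytically isomorphic to $\{xw - yz = 0\} \subset \mathbb A^4$, so the local fundamental group $\pi_1^{\mathrm{loc}}$ at each node vanishes. An application of the Seifert-van Kampen theorem then yields $\pi_1(V_{6,p} \setminus V_{6,p,\mathrm{sing}}) = 0$. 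Finally, letting $L_i$ denote the exceptional curves of the small resolution $V^1_{6,p} \to V_{6,p}$, one checks that $\pi_1(V^1_{6,p} \setminus \cup L_i) = 0$ (since the resolution is an isomorphism away from the nodes), and a second application of Seifert-van Kampen, filling back in the $\mathbb P^1$'s above the nodes, gives $\pi_1(V^1_{6,p}) = 0$.

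The reason the proof is essentially immediate — and why the paper phrases it as ``it is easy to verify following Lemma~\ref{lemma: simplyconn}'' — is that every geometric input needed is structurally identical to the Horrocks-Mumford case: complete intersection (hence Lefschetz applies), finitely many nodes of the standard conifold type (hence trivial local $\pi_1$), and a small resolution (hence the exceptional curves are rational and the complement computation is controlled). The main point requiring a moment's care, rather than a genuine obstacle, is confirming that the singularities really are ordinary double points of the conifold form $\{xw - yz = 0\}$ and that the resolution is small in the sense that exceptional fibers are curves, so that the local fundamental group and the van Kampen gluing behave as in the quintic case; both of these are supplied directly by \cite[Theorem 4.10]{GrPo01}. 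Once those facts are cited, the two Seifert-van Kampen applications go through word for word as in the proof of Lemma~\ref{lemma: simplyconn}.
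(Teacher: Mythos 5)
Your proposal is correct and is exactly the argument the paper intends: the paper gives no written proof beyond ``Following Lemma~\ref{lemma: simplyconn}, it is easy to verify,'' and your write-up is precisely that transposition (Lefschetz for the complete intersection, trivial local $\pi_1$ at the $72$ nodes, and two applications of Seifert--van Kampen across the small resolution). No issues.
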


The potential density property for $V_{6, p}$ is an easy consequence of Theorem~\ref{thm: doublefibration}:
\begin{cor}
Let $k$ be a number field and $p \in \mathbb P(W)(k)$ a general point.
Then $V_{6, y}$ satisfies the potential density property for the Zariski topology.
\end{cor}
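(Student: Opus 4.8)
The plan is to deduce the statement from Theorem~\ref{thm: doublefibration}, applied to the smooth model $X = V^1_{6,p}$; once potential density holds for $V^1_{6,p}$ it descends to $V_{6,p}$ along the small resolution $V^1_{6,p} \to V_{6,p}$, as this morphism is an isomorphism in codimension one. Thus it suffices to exhibit an effective, movable, non-big divisor $D$ on $V^1_{6,p}$ for which the abelian fibration $f \colon V^1_{6,p} \to \mathbb P^1$ does not birationally factor through the Iitaka fibration of $D$. Writing $H$ for the pullback of the hyperplane class of $\mathbb P^5$ and $A$ for the class of a general fibre of $f$, the class $A$ is base-point-free, hence movable and non-big, but its Iitaka fibration is $f$ itself; the content of the proof is therefore to produce a \emph{second} movable non-big class defining a different fibration.

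Mimicking the type $(1,4)$ case, I would first produce a birational involution $\iota \colon V^1_{6,p} \dashrightarrow V^1_{6,p}$ that does not preserve $f$, arising from the Heisenberg symmetry of the pencil of cubics spanned by $V_0 \otimes T$ on $\mathbb P^5$. Setting $D$ equal to the strict transform of $\iota_* A$, the divisor $D$ is effective and movable, being the transform of the base-point-free fibre class $A$. Next I would run the $D$-MMP to reach a flop model $V^2_{6,p}$ on which $D$ becomes nef; by the log abundance theorem in dimension three, exactly as in the proof of Theorem~\ref{thm: doublefibration}, $D$ is then semi-ample and defines a genuine fibration $g$. On $V^2_{6,p}$ I would compute the intersection numbers $H^3,\, H^2 A,\, H A^2,\, A^3$ — determined by the degree of the $(3,3)$ complete intersection, the $(1,6)$-polarization, and the $72$ nodes that are resolved — and verify that $D^3 = 0$, so that $D$ is non-big, together with $D \not\equiv \lambda A$ for every scalar $\lambda$, so that the fibres of $g$ differ from those of $f$ and hence $f$ cannot factor through $g$. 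These two facts establish the hypotheses of Theorem~\ref{thm: doublefibration}, giving potential density of $V^1_{6,p}$.

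The step I expect to be the main obstacle is the construction of the second fibration. In the $(1,4)$ case the requisite involution is transparent — it is the covering involution of the double cover of $\mathbb P^3$ — whereas $V_{6,p}$ is a $(3,3)$ complete intersection with no such evident structure, so the involution (equivalently, the extra nef ray on the boundary of the movable cone yielding a second Iitaka fibration) must be extracted from the Gross--Popescu/Heisenberg geometry. Concretely, the difficulty is to pin down the explicit class playing the role that $8H - A$ plays in the $(1,4)$ case and to verify $D^3 = 0$; once the intersection numbers on $V^2_{6,p}$ are in hand this is a one-line check, but identifying the correct second fibration is the crux. I note that the Picard rank of $V^1_{6,p}$ equals $6$, well above the rank-two threshold of Corollary~\ref{cor: oguiso}, so there is no Oguiso-type obstruction to the existence of such an additional fibration; indeed, the higher Picard rank is precisely what leaves room in the movable cone for a second non-big nef class.
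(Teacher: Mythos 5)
Your overall strategy --- exhibit a second fibration after flops and invoke Theorem~\ref{thm: doublefibration} --- is exactly the paper's, and your reduction from $V_{6,p}$ to the small resolution $V^1_{6,p}$ is fine. But the step you yourself flag as ``the crux,'' namely the actual construction of the second movable non-big class $D$ and the verification that its Iitaka fibration differs from $f$, is left entirely open in your write-up, and it is the whole content of the proof. The paper does not construct this fibration from scratch: it simply cites \cite[Remark 4.12]{GrPo01}, which asserts that applying flops \emph{twice} to $V^1_{6,p}$ produces a birational model carrying a second abelian surface fibration, and then applies Theorem~\ref{thm: doublefibration}. Without either that citation or an independent construction, your argument does not close.

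Moreover, the specific mechanism you propose --- a birational involution $\iota$ mimicking the covering involution in the $(1,4)$ case, with $D = \iota_*A$ --- is likely the wrong template here. In the $(1,4)$ case the two fibrations are exchanged by an involution and both have polarization type $(1,4)$; in the $(1,6)$ case the second fibration identified by Gross--Popescu has fibers of polarization type $(2,6)$, so it cannot be the image of the first fibration under any birational self-map, and no such involution should be expected. This is consistent with the second fibration appearing only after \emph{two} flops rather than one. So the gap is not merely a computation you deferred: the geometric input has to come from the Gross--Popescu analysis of the movable cone of the $(3,3)$ complete intersection (or an equivalent argument), not from a Heisenberg involution.
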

\begin{proof}
Applying flops twice to $V^1_{6, y}$ yields a birational model that admits another abelian fibration with fibers of polarization type $(2,6)$ (See \cite[Remark 4.12]{GrPo01}). Thus, potential density of $V_{6, y}$ follows from Theorem~\ref{thm: doublefibration}.
\end{proof}

\begin{rmk}
It would be interesting to study the Mordell-Weil group of the generic fiber of $V^1_{6,p}$ and show that there is a section of infinite order.
\end{rmk}

\section{Pfaffian Calabi-Yau threefolds (type $(1,7)$)}\label{Pfaffian}

We consider the Heisenberg group $H_7 \subset \mathrm{GL}_7(k) = \mathrm{GL}(V_7)$ generated by two elements
\[
\sigma (x_i) = x_{i-1}, \quad \tau (x_i) = \zeta_7^{-i}x_i
\]
where $\zeta_7$ is a primitive $7$-th root of unity.
We also consider the involution
\[
\iota(x_i) = x_{-i}.
\]
We denote the positive and negative eigenspaces of the Heisenberg involution $\iota$ by $V_+$ and $V_-$, and we consider the following matrix
\[
M'_7(x, y)= \left(x_{\frac{(i+j)}{2}}y_{\frac{(i-j)}{2}}\right)_{i, j \in \mathbb Z_7}.
\]
For any parameter point $y = (0:y_1:y_2:y_3:-y_3:-y_2:-y_1) \in \mathbb P^2_- = \mathbb P(V_-)$, the matrix $M_7(x, y)$ is alternating. We denote the closed subscheme defined by $6 \times 6$ - Pfaffians of the alternating matrix $M'_7(x, y)$ by $V_{7,y} \subset \mathbb P^6$.

\begin{proposition}{\cite[Proposition 5.8]{GrPo01}}
Let $y \in \mathbb P^2_-$ be a general point. Then the following properties hold.
\begin{enumerate}
\item The threefold $V_{7, y}$ has $49$ ordinary double points as singularities which occur at the $H_7$-orbit of $y$. Moreover it contains a pencil of polarized abelian surfaces of type $(1,7)$.
\item There exists a small resolution $V^1_{7, y}\rightarrow V_{7, y}$ such that $V^1_{7, y}$ is a smooth Calabi-Yau threefold of Picard rank $2$ with an abelian fibration $\pi_1 \colon V^1_{7, y} \rightarrow \mathbb P^1$ whose fibers form a pencil of $(1,7)$-polarized abelian surfaces. Moreover, the $49$ projective lines over the ordinary double points form sections of this fibration.
\end{enumerate}
\end{proposition}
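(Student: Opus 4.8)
The plan is to treat $V_{7,y}$ as a \emph{Pfaffian} threefold and to extract its Calabi--Yau structure from the Buchsbaum--Eisenbud structure theorem for codimension-three Gorenstein ideals. Since $y$ is fixed, each entry $x_{(i+j)/2}\,y_{(i-j)/2}$ of $M'_7(x,y)$ is a linear form in the homogeneous coordinates of $\mathbb{P}^6$, so the seven sub-maximal ($6\times 6$) Pfaffians are cubics. The first step is to check that for general $y$ these cubics meet in the expected codimension $3$, so that $V_{7,y}$ is indeed a threefold; granting this, the Pfaffian ideal is Gorenstein with a self-dual resolution of the shape $0 \to \mathcal{O}(-7) \to \mathcal{O}(-4)^{7} \to \mathcal{O}(-3)^{7} \to \mathcal{O}_{\mathbb{P}^6} \to \mathcal{O}_{V_{7,y}} \to 0$. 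A direct computation of $\mathcal{E}xt^{3}(\mathcal{O}_{V_{7,y}}, \omega_{\mathbb{P}^6})$ from this resolution gives $\omega_{V_{7,y}} \cong \mathcal{O}_{V_{7,y}}$, while arithmetic Cohen--Macaulayness yields $h^1(\mathcal{O}) = h^2(\mathcal{O}) = 0$; these are the Calabi--Yau conditions, valid once smoothness has been arranged.

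Next I would locate the singular locus. The matrix $M'_7$ is designed so that $H_7$ acts on $\mathbb{P}^6$ and permutes the Pfaffian equations; computing the stabilizer shows that the $H_7$-orbit of $y$ consists of $49$ points, in bijection with $H_7$ modulo its centre $\mathbb{Z}/7$. The essential point is then local: one linearizes the seven cubics at a single orbit point and checks that their common tangent-cone equation is a rank-$4$ quadric, i.e.\ that the point is an ordinary double point with local model $\{xw - yz = 0\}$; $H_7$-equivariance transports this verdict to all $49$ points at once, and genericity of $y$ must be used to exclude any further singularities. I expect this local computation --- controlling the Pfaffians to second order, establishing the ordinary-double-point model, and ruling out excess degeneracy for general $y$ --- to be the principal obstacle.

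The resolution and the fibration I would construct simultaneously, following the pattern already used for the $(1,4)$ and $(1,5)$ cases. The threefold $V_{7,y}$ is swept out by a pencil of $(1,7)$-polarized abelian surfaces $\{A_t\}_{t\in\mathbb{P}^1}$, whose base locus is precisely the $49$-point orbit of $y$; thus every member passes through the $49$ nodes. Forming the incidence variety $V^1_{7,y} = \{(x,t) : x \in A_t\}$ separates the two branches at each node and hence realizes the small resolution explicitly, with projectivity and the $H_7$-equivariant choice of ruling at all nodes following from this description. The second projection is the abelian fibration $\pi_1 \colon V^1_{7,y} \to \mathbb{P}^1$; away from the base locus it is an isomorphism onto $V_{7,y}$, while over a base point $x_0$ the fibre is the whole line $\{x_0\}\times\mathbb{P}^1$, which maps isomorphically to $\mathbb{P}^1$ and is therefore a section. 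This accounts for the $49$ sections, and the small resolution leaves $\omega$ trivial, so $V^1_{7,y}$ is a smooth Calabi--Yau threefold.

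Finally, for the Picard rank I would argue that $\mathrm{Pic}(V^1_{7,y})\otimes\mathbb{Q}$ is spanned by the hyperplane class $H$ pulled back from $\mathbb{P}^6$ and the fibre class $A$ of $\pi_1$. A Lefschetz-type argument in $\mathbb{P}^6$ controls classes coming from the ambient space, and for general $y$ the generic fibre is a simple abelian surface of N\'eron--Severi rank one, so a Shioda--Tate count leaves no room for additional independent divisor classes (the $49$ exceptional curves are sections, hence contribute curve classes rather than divisors). Since $H$ and $A$ are visibly independent, this yields $\mathrm{rk}\,\mathrm{Pic}(V^1_{7,y}) = 2$.
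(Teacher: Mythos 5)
First, a point of reference: the paper does not prove this proposition at all --- it is quoted verbatim from \cite[Proposition 5.8]{GrPo01}, so there is no internal proof to compare your sketch against. Judged on its own merits, your outline reproduces the general shape of the Gross--Popescu argument (Pfaffian/Buchsbaum--Eisenbud structure, Heisenberg equivariance, resolution via the incidence variety of the pencil), but it has one central gap: you simply \emph{assert} that ``$V_{7,y}$ is swept out by a pencil of $(1,7)$-polarized abelian surfaces $\{A_t\}$ whose base locus is precisely the $49$-point orbit of $y$.'' This is the heart of the matter, not a starting point. It rests on the theory of $H_7$-invariant $(1,7)$-polarized abelian surfaces in $\mathbb P^6$: one must know that such a surface is cut out by the $6\times 6$ Pfaffians of a Moore matrix $M'_7(x,y)$ with $y$ a point of the odd part $A\cap\mathbb P^2_-$ of the surface, and that the family of such surfaces through a general $y\in\mathbb P^2_-$ is exactly a pencil whose union is the Pfaffian threefold $V_{7,y}$. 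Without this, the identification of the nodes with the base locus, the incidence-variety resolution, the fibration $\pi_1$, and the $49$ sections all have nothing to stand on. Your Pfaffian-resolution computation of $\omega_{V_{7,y}}\cong\mathcal O_{V_{7,y}}$ and the orbit count $|H_7/Z(H_7)|=49$ are fine, but they are the easy parts.

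Two secondary issues. For the ordinary double points, ``linearize the seven cubics and check the tangent cone is a rank-$4$ quadric'' glosses over the fact that $V_{7,y}$ is not a complete intersection (seven cubics cutting codimension $3$); you must first exhibit it near each orbit point as a hypersurface in a smooth fourfold before the rank-$4$ criterion applies, and the genericity argument excluding extra singularities away from the orbit is itself nontrivial. For the Picard rank, your Shioda--Tate count needs two unaddressed inputs: that every member of the pencil is irreducible (otherwise fibral components contribute extra divisor classes), and that the geometric generic fiber has N\'eron--Severi rank one --- note that in the present paper the implication runs the other way (Corollary \ref{cor: oguiso} \emph{uses} Picard rank $2$ to conclude finiteness of the Mordell--Weil group, and Proposition \ref{prop: Mordell-(1, 7)} again uses rank $2$ as input), so you cannot borrow simplicity or Mordell--Weil finiteness from downstream without circularity. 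The standard route in \cite{GrPo01} is instead a Hodge-theoretic/defect computation for the small resolution of the nodal threefold.
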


First we discuss potential density for $V_{7, y}$. This is an easy consequence of Theorem~\ref{thm: doublefibration}:
\begin{cor}
Let $k$ be a number field and $y \in \mathbb P^2_-(k)$ a general point.
Then $V_{7, y}$ satisfies the potential density property with respect to the Zariski topology.
\end{cor}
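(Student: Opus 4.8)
The plan is to verify the hypotheses of Theorem~\ref{thm: doublefibration} for $X = V^1_{7,y}$ equipped with its abelian fibration $\pi_1 \colon V^1_{7,y} \to \mathbb{P}^1$; that is, to produce an effective movable non-big divisor $D$ whose Iitaka fibration is genuinely different from $\pi_1$. A preliminary remark is that, since the Picard rank of $V^1_{7,y}$ is $2$, Corollary~\ref{cor: oguiso} shows that the Mordell--Weil group of the generic fibre is \emph{finite}, so the method of translating a section of infinite order (used in the Horrocks--Mumford case) is unavailable, and the double-fibration method is the natural route. The real N\'eron--Severi space $N^1(V^1_{7,y})_{\mathbb{R}}$ is spanned by the abelian fibre class $A$ and the class $H$ of the pullback of $\mathcal{O}_{\mathbb{P}^6}(1)$ along the small contraction $V^1_{7,y} \to V_{7,y} \subset \mathbb{P}^6$. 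Here $A = \pi_1^* \mathcal{O}_{\mathbb{P}^1}(1)$ is semiample and non-big (it induces $\pi_1$), while $H$ is nef and big, since the contraction is birational and $H^3 = \deg V_{7,y} > 0$. Thus the nef cone $\langle A, H\rangle$ contains no non-big ray except $A$ itself, so a second fibration can appear only after leaving the nef cone.

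Next I would enlarge to the movable cone by flopping. Flopping the $49$ lines lying over the ordinary double points of $V_{7,y}$ (which are sections of $\pi_1$) produces the other minimal model $V^2_{7,y}$, and by Lemma~\ref{lemma:flops} the birational map $V^1_{7,y} \dashrightarrow V^2_{7,y}$ is an isomorphism in codimension one. Let $D$ denote the boundary ray of the movable cone opposite to $A$, carried by the nef cone of $V^2_{7,y}$ across the $H$-wall; its strict transform on $V^1_{7,y}$ is then effective and movable. Granting that $D$ is non-big and semiample, $D$ defines a fibration $g \colon V^2_{7,y} \to B$ onto a lower-dimensional base, and since $D$ is not proportional to $A$ while the Picard rank is $2$, this fibration is distinct from $\pi_1$; hence $\pi_1$ cannot factor through the Iitaka fibration of $D$. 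The class $D$ is therefore exactly the effective movable non-big divisor required by Theorem~\ref{thm: doublefibration}, and potential density for $V_{7,y}$ follows.

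The main obstacle is precisely the claim that the far boundary ray $D$ is non-big (fibre-type), rather than the class of a small or divisorial contraction. I would establish this exactly as in the type $(1,4)$ case, where one exhibits a birational involution $\iota$ with $D \sim \iota_*(A)$ and verifies $D^3 = 0$ directly from the intersection numbers on the flopped model. Concretely, I would track how the triple products $A^3$ and $H \cdot A^2$ change under the $49$ flops to recover the cubic intersection form on $V^2_{7,y}$, locate the second root of this binary cubic on the boundary of the movable cone, and check that the corresponding nef divisor $D$ satisfies $D^3 = 0$; semiampleness of $D$ then follows from log abundance in dimension three, exactly as invoked in the proof of Theorem~\ref{thm: doublefibration}. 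The cleanest incarnation would be to produce the birational involution exchanging the two boundary rays --- presumably built from the Pfaffian symmetry of $M'_7(x,y)$ together with the fibrewise $(-1)$-involution --- so that $D = \iota_*(A)$ is a second fibre class, making $D^2 \equiv 0$ and hence the non-bigness of $D$ automatic.
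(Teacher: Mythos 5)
Your overall strategy coincides with the paper's: reduce to Theorem~\ref{thm: doublefibration} by exhibiting a second fibration on a birational model of $V^1_{7,y}$. But the paper's entire proof consists of importing that second fibration from the literature: by \cite[Remark 5.9]{GrPo01}, applying flops \emph{twice} to $V^1_{7,y}$ yields a model carrying another abelian surface fibration, of polarization type $(1,14)$, and Theorem~\ref{thm: doublefibration} then applies. In your write-up the corresponding fact --- that the far boundary ray $D$ of the movable cone is a fibration ray, i.e.\ non-big with $D^3=0$, rather than the class of another small or divisorial contraction --- is exactly the point you explicitly ``grant'' and then list three candidate strategies for, none of which is carried out. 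Since everything else in your argument is formal (Picard rank $2$, $H$ nef and big, $A$ the given fibre class), this granted claim is the entire mathematical content of the corollary beyond Theorem~\ref{thm: doublefibration}, so as written the proof has a genuine gap.

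There is also a concrete inaccuracy in your cone picture. You flop the $49$ section lines once, obtain $V^2_{7,y}$, and expect the second fibration to sit on the opposite wall of that single adjacent nef chamber. The actual geometry requires \emph{two} flops before the second fibration appears, so the movable cone of $V^1_{7,y}$ is subdivided into at least three nef chambers; the boundary ray of the nef cone of $V^2_{7,y}$ opposite to $H$ is a flopping wall, not a fibration ray. Consequently the intersection-theoretic computation you sketch (tracking $A^3$ and $H\cdot A^2$ through the $49$ flops and locating the second root of the binary cubic) would identify the wrong ray unless iterated across a further wall whose flopping curves you have not identified. If you want a self-contained argument rather than the citation, you would need either to locate and flop that second family of curves explicitly, or to construct the birational involution you allude to at the end and verify that it does not preserve the chamber of $V^2_{7,y}$; simply asserting the two-chamber picture will not do here.
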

\begin{proof}
Applying flops twice to $V^1_{7, y}$ yields a birational model that admits another abelian fibration with fibers of polarization type $(1, 14)$ (See \cite[Remark 5.9]{GrPo01}). Thus, potential density of $V_{7, y}$ follows from Theorem~\ref{thm: doublefibration}.
\end{proof}

\begin{proposition}
\label{prop: Mordell-(1, 7)}
The Mordell-Weil group $\mathrm{MW}(\pi_1)$ is isomorphic to $\mathbb Z_7 \oplus \mathbb Z_7$.
\end{proposition}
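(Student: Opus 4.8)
The plan is to prove the isomorphism by establishing two opposite inclusions inside $\mathrm{MW}(\pi_1) = V^1_{7,y}(\mathbb P^1)$, namely $\mathbb Z_7 \oplus \mathbb Z_7 \hookrightarrow \mathrm{MW}(\pi_1)$ and $\mathrm{MW}(\pi_1) \hookrightarrow K(L)$, where $L$ is the $(1,7)$-polarization on the generic fibre and $K(L) \cong \mathbb Z_7 \oplus \mathbb Z_7$. For the lower bound I would use the Heisenberg symmetry: the center $Z(H_7)$ acts by scalars on $\mathbb P^6$, so the quotient $G := H_7/Z(H_7) \cong \mathbb Z_7 \oplus \mathbb Z_7$ acts faithfully on $\mathbb P^6$ preserving $V_{7,y}$, and it acts fibrewise by translations on the pencil of abelian surfaces $A_\lambda$. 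Since the $49$ ordinary double points are precisely the $H_7$-orbit of $y$, the group $G$ permutes them simply transitively, and the $49$ projective lines lying over them are sections of $\pi_1$. Fixing one of these as the zero section, the remaining sections are its $G$-translates, so $G \cong \mathbb Z_7 \oplus \mathbb Z_7$ injects into $\mathrm{MW}(\pi_1)$ as a group of torsion sections.

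For the reverse inclusion I would first invoke Corollary~\ref{cor: oguiso}: since the Picard rank of $V^1_{7,y}$ is two, the group $\mathrm{MW}(\pi_1)$ is finite and sits inside $\mathrm{Aut}(V^1_{7,y})$, so every section $P$ is torsion of some order $n$ and induces a genuine regular automorphism $t_P$ of $V^1_{7,y}$ acting fibrewise by translation. The key step is to show that $t_P$ fixes the polarization class. Because $h^1(\mathcal O) = 0$ we have $\mathrm{Pic}(V^1_{7,y}) = \mathrm{NS}(V^1_{7,y})$, of rank two with basis given by the hyperplane pullback $H$ and the fibre class $A$. Now $t_P$ fixes $A$, and on the generic fibre it acts by a translation, which is trivial on $\mathrm{NS}(A_\eta)$; since $A$ restricts to $0$ on $A_\eta$, the class $t_P^* H - H$ restricts to $0$ on $A_\eta$ and hence lies in $\mathbb Z A$, say $t_P^* H = H + mA$. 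Iterating and using $t_P^* A = A$ gives $(t_P^*)^n H = H + nm A$; as $t_P^n = \mathrm{id}$ and $A \neq 0$, this forces $m = 0$, so $t_P^* H = H$ in $\mathrm{Pic}(V^1_{7,y})$.

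Restricting this equality to the generic fibre and using that $H$ restricts to the polarization line bundle $L$ of type $(1,7)$, I obtain $t_P^* L \cong L$, i.e. $\phi_L(P) = t_P^* L \otimes L^{-1} = 0$, so $P \in \ker \phi_L = K(L)$. By the structure of the kernel of a polarization of type $(1,7)$ recalled in the polarization subsection, $K(L) \cong (\mathbb Z/7\mathbb Z)^2$, whence $\mathrm{MW}(\pi_1) \subseteq \mathbb Z_7 \oplus \mathbb Z_7$. Combined with the lower bound this yields the claimed isomorphism. The main obstacle I anticipate is the verification that $t_P$ is honestly \emph{regular} rather than merely birational — which here is supplied by the Picard-rank-two hypothesis through Corollary~\ref{cor: oguiso} — together with the careful bookkeeping needed to pass from the Néron--Severi computation $t_P^* H = H$ to the sharper statement $P \in K(L)$ on the generic fibre; the identification $K(L) \cong \mathbb Z_7 \oplus \mathbb Z_7$ itself is then immediate from the polarization type.
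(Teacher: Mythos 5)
Your proposal is correct and shares its skeleton with the paper's proof (Heisenberg translations give the $49$ torsion sections for the lower bound; Picard rank $2$ plus Corollary~\ref{cor: oguiso} makes each translation $t_P$ a genuine finite-order automorphism for the upper bound), but the two arguments diverge at the final identification. The paper deduces from $f_s^*H = H$ that $f_s$ acts \emph{linearly} on $\mathbb P^6$, and then identifies the group of linear automorphisms translating the $(1,7)$-surfaces and preserving their polarization with $H_7/Z(H_7)\cong \mathbb Z_7\oplus\mathbb Z_7$ --- essentially an appeal to the theta-group picture. You instead stay on the generic fibre: from $t_P^*H = H$ you get $t_{P}^*L\cong L$, hence $\phi_L(P)=0$ and $P\in K(L)\cong(\mathbb Z/7\mathbb Z)^{2}$. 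Your route is more intrinsic and avoids the linearization step; it also supplies an actual argument (the $t_P^*H = H+mA$ iteration killed by finite order) for the triviality of the action on $\mathrm{Pic}$, which the paper merely asserts. One point to tighten: your iteration argument a priori only yields $t_P^*H=H$ in $N^1(V^1_{7,y})_{\mathbb Q}$ (the kernel of restriction to $\mathrm{NS}(A_\eta)$ need not literally be $\mathbb Z A$, and numerical triviality leaves room for torsion classes), whereas the conclusion $\phi_L(P)=0$ requires the equality in $\mathrm{Pic}$. This is repaired by noting that $\mathrm{Pic}(V^1_{7,y})\cong H^2(V^1_{7,y},\mathbb Z)$ is torsion-free because the threefold is simply connected (by the same argument as Lemma~\ref{lemma: simplyconn}); with that remark added, your proof is complete.
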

\begin{proof}
First note that the Heisenberg group $H_7$ fixes the abelian surfaces of type $(1,7)$ and acts on them by translations. 

Let $s \in \mathrm{MW}(\pi_1)$ be a section of $\pi_1$ and consider its birational automorphism $f_s : V^1_{7, y} \dashrightarrow V^1_{7, y}$ given by the translation. Since the Picard rank of $V^1_{7, y}$ is $2$, this birational automorphism is an actual regular automorphism and acts on the Picard group of $V^1_{7, y}$ trivially. Thus the pullback of the hyperplane class to $V_{7, y}\subset \mathbb P^6$ is fixed by $f_s$, so $f_s$ acts on $V_{7, y} \subset \mathbb P^6$ linearly. However, $f_s$ fixes the abelian surfaces themselves and their polarizations, hence the action is given by $H_7/Z(H_7) = \mathbb Z_7 \oplus \mathbb Z_7$. Thus our assertion follows.
\end{proof}

\section{The $(2, 2, 2, 2)$-complete intersections (type $(1,8)$)}\label{2222}
We consider the Heisenberg group $H_8 \subset \mathrm{GL}_8(k) = \mathrm{GL}(V_8)$ generated by two elements
\[
\sigma (x_i) = x_{i-1}, \quad \tau (x_i) = \zeta_8^{-i}x_i
\]
where $\zeta_8$ is a primitive $8$-th root of the unity.
We also consider the involution
\[
\iota(x_i) = x_{-i}.
\]
We denote the positive and negative eigenspaces of the Heisenberg involution $\iota$ by $V_+$ and $V_-$. The Heisenberg group $H_8$ acts on $\mathbb P^7 = \mathbb P(V_8)$ and we consider the subgroup $H' \subset H_8$ generated by $\sigma^4, \tau^4$. We also consider the following matrix:
\[
M_4(x, y) = \left(x_{i+j}y_{i-j} + x_{i+j+4}y_{i-j+4}\right)_{0\leq i, j\leq 3}.
\]
The space of $H'$-invariant quadrics vanishing along the $H_8$-orbit of $y\in \mathbb P_-^2$ in 
$H^0(\mathcal O_{\mathbb P^7}(2))^{H'}$ is $4$ dimensional for a general $y$ and it is spanned by the $4\times 4$ - Pfaffians of the matrices $M_4(x, y), M_4(\sigma^4(x), y), M_4(\tau^4(x), y), M_4(\sigma^4\tau^4(x), y)$. We denote the $(2, 2, 2, 2)$-complete intersection of these quadrics by $V_{8, y}$. 
\begin{thm}{\cite[Theorem 6.5 and 6.9]{GrPo01}}
For a general $y\in \mathbb P_-^2$, $V_{8, y}$ is a singular Calabi-Yau threefold with exactly $64$ ordinary double points consisting of the $H_8$-orbit of $y$. It comes with a pencil of polarized abelian surfaces of type $(1, 8)$. One of its small resolutions comes with the abelian fibration $\pi_1: V^1_{8, y}\rightarrow \mathbb P^1$ of type $(1,8)$ and it is a smooth Calabi-Yau threefold with Picard rank $2$.
\end{thm}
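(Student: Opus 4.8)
This statement is quoted from \cite{GrPo01}, so in the paper one simply cites it; to prove it directly I would follow the strategy already used above for the $(1,5)$, $(1,6)$ and $(1,7)$ cases, the only structural difference being that here $V_{8,y}$ is an honest $(2,2,2,2)$-complete intersection rather than a Pfaffian locus. First I would check that for general $y$ the four $H'$-invariant Pfaffian quadrics form a regular sequence in the homogeneous coordinate ring of $\mathbb P^7$, so that $V_{8,y}$ is a genuine $(2,2,2,2)$-complete intersection threefold; this is an open condition on $y$ and can be verified at a single explicit point. Granting this, adjunction gives $\omega_{V_{8,y}} \cong \mathcal O_{\mathbb P^7}(-8 + 2\cdot 4)\vert_{V_{8,y}} = \mathcal O_{V_{8,y}}$, and the Koszul computation of the cohomology of a complete intersection (valid even in the singular case) yields $h^1(\mathcal O_{V_{8,y}}) = 0$. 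Thus $V_{8,y}$ is a possibly singular Calabi-Yau threefold.

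Next I would pin down the singular locus. By construction the four quadrics are chosen to vanish along the $H_8$-orbit of $y$, so every point of that orbit lies on $V_{8,y}$ and is a singular point. Since $\zeta I$ acts trivially on $\mathbb P^7$, this orbit is really a $G$-orbit for $G = H_8/Z(H_8) \cong \mathbb Z_8 \oplus \mathbb Z_8$, and for general $y$ the stabilizer is trivial, giving exactly $64$ orbit points. To see these are ordinary double points, I would compute the Jacobian and Hessian of the defining quadrics at one orbit point and verify that the Hessian has maximal rank $4$ there; again this is an open condition checkable at a single $y$, and $G$-equivariance then transports the node computation simultaneously to all $64$ points. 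A Bertini-type genericity argument on the complement of the orbit rules out any further singularities.

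For the fibration, I would exhibit the pencil directly: the relevant Heisenberg-equivariant linear system has as its general member a smooth $(1,8)$-polarized abelian surface $A_s \subset \mathbb P^7$, and one checks that $V_{8,y}$ is swept out by the resulting pencil $\{A_s\}_{s \in \mathbb P^1}$, producing a rational map $V_{8,y} \dashrightarrow \mathbb P^1$. Each node of $V_{8,y}$ admits two small resolutions (replacing the node by a $\mathbb P^1$), and the crucial point is to make a \emph{compatible} choice across the $64$ nodes yielding a projective smooth model $V^1_{8,y}$ on which the pencil becomes a morphism $\pi_1$ and each exceptional $\mathbb P^1$ meets the fibers transversally, hence becomes a section.

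Finally I would compute the Picard rank. Because $V^1_{8,y}$ is a Calabi-Yau threefold we have $h^{2,0}=0$, so $\mathrm{Pic}(V^1_{8,y})\otimes\mathbb Q = H^2(V^1_{8,y},\mathbb Q)$ and the Picard rank equals $b_2$; it therefore suffices to show $b_2 = 2$, which I would obtain by comparing the Betti numbers of $V^1_{8,y}$ with those of the nodal $V_{8,y}$ and controlling the defect contributed by the vanishing cycles at the $64$ nodes. The hard part, and the step deserving the most care, is precisely this last point together with the projectivity of the small resolution: a small resolution of a nodal threefold is in general only Moishezon, and a \emph{projective} small resolution exists only when the homology classes of the exceptional curves satisfy suitable relations. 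Showing that a projective small resolution of Picard rank exactly $2$ (and not larger) exists is where the explicit $H_8$- and $\iota$-symmetry of the configuration of $64$ nodes becomes indispensable, since it is this symmetry that both forces a consistent global choice of small resolution and rigidifies the second Betti number to the value $2$.
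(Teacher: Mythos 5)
You are right that the paper contains no proof of this statement: it is imported wholesale from Gross--Popescu, so there is no internal argument to compare against, and citing \cite{GrPo01} is exactly what the authors do. Your reconstruction follows the same general strategy as the source (complete-intersection adjunction, Heisenberg symmetry locating the nodes, small resolution, Betti-number count), so as an outline it is pointed in the right direction.

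However, as a proof it has genuine gaps beyond the ones you flag yourself. First, the ``Bertini-type genericity argument on the complement of the orbit'' does not apply as stated: $V_{8,y}$ is by definition the common zero locus of \emph{all} four quadrics in the $H'$-invariant system through the orbit, i.e.\ it is the base locus of that linear system, and Bertini says nothing about smoothness of the base locus. Ruling out singularities away from the $H_8$-orbit of $y$ requires either the incidence-variety argument over the parameter space $\mathbb P^2_-$ (fibering the total space over $y$ and checking generic smoothness there) or an explicit computation at one $y$, which is in fact how Gross--Popescu proceed. Second, nothing in your sketch actually identifies the general member of the pencil as an abelian surface of polarization type $(1,8)$; you assert that ``the relevant Heisenberg-equivariant linear system has as its general member a smooth $(1,8)$-polarized abelian surface,'' but establishing this is the core content of the Gross--Popescu construction (it rests on the representation theory of $H_8$ acting on $H^0(L)$ for a $(1,8)$-polarized abelian surface and on the identification of the quadrics through such a surface), not a routine verification. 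Third, the existence of a \emph{projective} small resolution and the computation of the Picard rank as exactly $2$ are, as you say, the hard part --- but naming a step as hard is not the same as carrying it out; in particular the claim that the $64$ exceptional curves all become sections of $\pi_1$ on a single projective model needs the explicit description of the Weil divisors on $V_{8,y}$ that one blows up. So the proposal is an acceptable roadmap but would need substantial additional work (or, as in the paper, an appeal to \cite{GrPo01}) to constitute a proof.
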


Following Lemma~\ref{lemma: simplyconn}, it is easy to verify the following proposition:
\begin{prop}
Suppose that the ground field is $\mathbb C$. Then $V^1_{8,y}$ is a Calabi-Yau threefold in the strict sense.
\end{prop}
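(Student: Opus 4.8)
The plan is to follow verbatim the strategy of Lemma~\ref{lemma: simplyconn}, reducing the statement to the vanishing of the fundamental group. Since $V^1_{8,y}$ is already known to be a smooth abelian Calabi-Yau threefold, we have $\omega_{V^1_{8,y}} \cong \mathcal O$ and $h^1(\mathcal O) = 0$; the only additional property required for the \emph{strict sense} is that $V^1_{8,y}$ be simply connected. Thus the entire content is to show $\pi_1(V^1_{8,y}) = 0$.

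First I would work on the singular model $V_{8,y} \subset \mathbb P^7$. As $V_{8,y}$ is a $(2,2,2,2)$-complete intersection of dimension $3$, the Lefschetz hyperplane theorem gives $\pi_1(V_{8,y}) = 0$: cutting $\mathbb P^7$ successively by the four defining quadrics keeps the dimension $\geq 2$ at every step, so each section induces a surjection on $\pi_1$, and $\pi_1(\mathbb P^7) = 0$. Next I would record that each of the $64$ singular points is an ordinary double point, hence analytically isomorphic to $\{xw - yz = 0\} \subset \mathbb A^4$; consequently the local fundamental group $\pi_1^{\mathrm{loc}}$ at each node vanishes, as the link of a three-dimensional node is simply connected. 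The Seifert--van Kampen theorem then yields $\pi_1\big(V_{8,y} \setminus (V_{8,y})_{\mathrm{sing}}\big) = 0$.

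To finish, let $L_i \cong \mathbb P^1$ be the exceptional curves of the small resolution $V^1_{8,y} \to V_{8,y}$. Since this resolution is an isomorphism away from the $64$ nodes, there is an isomorphism $V^1_{8,y} \setminus \bigcup_i L_i \cong V_{8,y} \setminus (V_{8,y})_{\mathrm{sing}}$, so the complement of the exceptional curves in $V^1_{8,y}$ is simply connected. A second application of Seifert--van Kampen, gluing back the simply connected curves $L_i$, then gives $\pi_1(V^1_{8,y}) = 0$, as desired.

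The only delicate point is the first step, since the Lefschetz hyperplane theorem is classically stated for \emph{smooth} sections while $V_{8,y}$ is singular. I expect this to be the main obstacle only in a bookkeeping sense: the cleanest fix is to invoke the version of the Lefschetz theorem valid for complete intersections with isolated singularities (for instance, that a normal complete intersection of dimension $\geq 2$ in projective space is simply connected), or equivalently to combine surjectivity of $\pi_1$ under hyperplane sections with the triviality of the local fundamental groups established in the second step. Everything else is a direct transcription of the argument in Lemma~\ref{lemma: simplyconn}.
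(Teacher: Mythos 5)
Your proposal is correct and is exactly the argument the paper intends: the paper gives no separate proof here, stating only that the claim follows ``Following Lemma~\ref{lemma: simplyconn}'', and your write-up is a faithful adaptation of that lemma's proof (Lefschetz for the singular complete intersection, trivial local fundamental groups at the nodes, and two applications of Seifert--van Kampen) to the $(2,2,2,2)$ case. Your remark on using the singular/complete-intersection version of the Lefschetz theorem is a sensible clarification of a point the paper also glosses over in the Horrocks--Mumford case.
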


Moreover it comes with another abelian fibration after applying one flop (\cite[Proposition 6.14]{GrPo01}), so the potential density follows from Theorem~\ref{thm: doublefibration}.
\begin{cor}
Let $k$ be a number field and suppose that $y \in \mathbb P_-^2(k)$ is general. Then $V_{8, y}$ satisfies the potential density property with respect to the Zariski topology.
\end{cor}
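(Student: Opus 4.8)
The plan is to verify the hypotheses of Theorem \ref{thm: doublefibration} for the smooth model $V^1_{8,y}$, and then to transport the resulting density statement to $V_{8,y}$ along the small resolution. After replacing $k$ by a finite extension containing a primitive $8$-th root of unity (harmless for potential density), the pencil of $H'$-invariant quadrics, the threefold $V_{8,y}$, its small resolution $V^1_{8,y}$, and the abelian fibration $\pi_1 \colon V^1_{8,y} \to \mathbb P^1$ are all defined over $k$; thus $V^1_{8,y}$ is a smooth abelian Calabi-Yau threefold of the type to which Theorem \ref{thm: doublefibration} applies.

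The first substantive step is to produce the effective, movable, non-big divisor required by the theorem. By \cite[Proposition 6.14]{GrPo01}, a single flop $V^1_{8,y} \dashrightarrow W$ yields a threefold $W$ carrying a second abelian surface fibration $g \colon W \to \mathbb P^1$. Let $D_W$ be the class of a general fibre of $g$ (equivalently, the pullback of a point of $\mathbb P^1$), and let $D$ be its strict transform on $V^1_{8,y}$. Then $D$ is effective, and it is non-big, since $D_W$ has Iitaka dimension one as the fibre class of a fibration onto a curve and a flop preserves Iitaka dimension; it is movable, since $D_W$ is base-point free and movability is unaffected by the flop. By construction the $D$-MMP recovers the flop to $W$, so the Iitaka fibration of $D$ is $g$, computed on the flopped model $W$.

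The remaining hypothesis to check — and the one point where the argument is not purely formal — is that $\pi_1$ does not factor birationally through the Iitaka fibration of $D$, that is, through $g$. Here I would argue that $\pi_1$ and $g$ are genuinely distinct fibrations: the flop of \cite[Proposition 6.14]{GrPo01} alters the numerical class of a general fibre of $\pi_1$ (this can be read off from the intersection theory on the flopped model, exactly as in the computation $A^3 = -512$ for the type $(1,4)$ case), so a general fibre of $g$ is neither a fibre of $\pi_1$ nor contained in one. Were $\pi_1$ to factor through $g$, the two families would agree birationally and this distinction would collapse; this is precisely the place in the proof of Theorem \ref{thm: doublefibration} where the non-factoring hypothesis is used (to guarantee that the chosen $Y$ is not contained in the abelian fibre $A_t$). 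Granting this, Theorem \ref{thm: doublefibration} applies and gives Zariski potential density of the rational points on $V^1_{8,y}$.

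Finally, the small resolution $V^1_{8,y} \to V_{8,y}$ is a birational, in particular dominant, morphism with dense image, so Zariski density of $V^1_{8,y}(k')$ over a suitable finite extension $k'/k$ descends to Zariski density of $V_{8,y}(k')$. This establishes potential density for $V_{8,y}$. I expect the main (though modest) obstacle in this scheme to be the non-factoring verification of the third paragraph; once that is in place, everything else follows formally from the cited results.
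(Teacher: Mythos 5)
Your proposal is correct and follows the same route as the paper: the paper's proof is the one-line observation that $V^1_{8,y}$ acquires a second abelian surface fibration after one flop by \cite[Proposition 6.14]{GrPo01}, so Theorem~\ref{thm: doublefibration} applies. You have simply made explicit the verifications (choice of $D$ as the strict transform of the fibre class of the second fibration, its effectivity, movability and non-bigness, the non-factoring hypothesis via the Picard rank $2$ cone geometry, and the descent along the small resolution) that the paper leaves implicit.
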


The computation of the Mordell-Weil group of $\pi_1$ is similar to Proposition~\ref{prop: Mordell-(1, 7)}.

\begin{proposition}
\label{prop: Mordell-(1, 8)}
The Mordell-Weil group $\mathrm{MW}(\pi_1)$ is isomorphic to $\mathbb Z_8 \oplus \mathbb Z_8$.
\end{proposition}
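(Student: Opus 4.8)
The plan is to mimic the proof of Proposition~\ref{prop: Mordell-(1, 7)} verbatim, since the geometric setup for $V^1_{8,y}$ is structurally identical: we have a smooth abelian Calabi-Yau threefold of Picard rank $2$ carrying the fibration $\pi_1 \colon V^1_{8,y} \to \mathbb P^1$, and the Heisenberg group $H_8$ fixes the $(1,8)$-polarized abelian surface fibers and acts on them by translations. First I would take an arbitrary section $s \in \mathrm{MW}(\pi_1)$ and consider the associated birational automorphism $f_s \colon V^1_{8,y} \dashrightarrow V^1_{8,y}$ given by fiberwise translation by $s$; this is well-defined on the $\pi_1$-smooth locus by Theorem~\ref{theorem: Neronmodel}, which identifies that locus with the N\'eron model of the generic fiber.

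The key step is the rigidity argument forced by the Picard rank being $2$. Since $\rho(V^1_{8,y}) = 2$, the argument used in the proof of Corollary~\ref{cor: oguiso} (invoking \cite[Theorem 1.2, case (1)]{Ogu14}) shows there is no flop producing a second nef fiber class, so $f_s$ is an honest regular automorphism rather than merely a pseudo-automorphism, and it acts trivially on $\mathrm{Pic}(V^1_{8,y})$. In particular $f_s$ fixes the pullback of the hyperplane class of the ambient $\mathbb P^7$, which means $f_s$ is induced by a projective linear transformation on $\mathbb P^7$ preserving $V_{8,y}$. Because $f_s$ also fixes each abelian surface fiber together with its polarization and acts on it by translation, the linear action must lie in the group of translations preserving the $(1,8)$-polarization, which is precisely $H_8/Z(H_8) \cong \mathbb Z_8 \oplus \mathbb Z_8$.

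This gives an injection $\mathrm{MW}(\pi_1) \hookrightarrow \mathbb Z_8 \oplus \mathbb Z_8$, and since the full Heisenberg translation group is realized by genuine automorphisms of $V^1_{8,y}$ that restrict to sections of $\pi_1$, the map is also surjective, yielding $\mathrm{MW}(\pi_1) \cong \mathbb Z_8 \oplus \mathbb Z_8$.

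The main obstacle I anticipate is verifying that the linear automorphisms preserving the polarization on a type $(1,8)$ abelian surface fiber are \emph{exactly} $H_8/Z(H_8)$, with no extra automorphisms slipping in. For the $(1,7)$ case this is clean because $7$ is prime, so the reduced Heisenberg quotient $\mathbb Z_7 \oplus \mathbb Z_7$ already exhausts the relevant translations; for $d = 8$ one must check that the constraint of fixing the polarization type, together with commuting with the $H'$-action defining the equations, rules out translations by points outside the image of $H_8$. I would address this by appealing to the explicit description of the $H_8$-action on $H^0(\mathcal O_{\mathbb P^7}(2))^{H'}$ and the fact that the level structure pins down the translation subgroup, exactly as in the $(1,7)$ computation, so that no genuinely new calculation beyond what \cite{GrPo01} provides is required.
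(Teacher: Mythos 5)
Your proposal is correct and follows essentially the same route as the paper, which itself gives no separate argument here but simply states that the computation is similar to Proposition~\ref{prop: Mordell-(1, 7)}; your adaptation (Picard rank $2$ forces $f_s$ to be a regular automorphism acting trivially on the Picard group, hence linearly on $\mathbb P^7$, landing in $H_8/Z(H_8)\cong\mathbb Z_8\oplus\mathbb Z_8$, with surjectivity coming from the Heisenberg translations) is exactly the intended proof. Your closing caveat about checking that no extra polarization-preserving translations appear for composite $d=8$ is a legitimate point of care that the paper itself does not spell out, but it does not change the approach.
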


\section{The intersections of two Grassmannians (type $(1,10)$)}\label{Grassmannians}
We consider the Heisenberg group $H_{10} \subset \mathrm{GL}_{10}(k) = \mathrm{GL}(V_{10})$ generated by two elements
\[
\sigma (x_i) = x_{i-1}, \quad \tau (x_i) = \zeta_{10}^{-i}x_i
\]
where $\zeta_{10}$ is a primitive $10$-th root of the unity.
We also consider the involution
\[
\iota(x_i) = x_{-i}.
\]
We denote the positive and negative eigenspaces of the Heisenberg involution $\iota$ by $V_+$ and $V_-$. We also consider the following matrix:
\[
M_5(x, y) = \left(x_{i+j}y_{i-j} + x_{i+j+5}y_{i-j+5}\right)_{0\leq i, j\leq 4}.
\]
For a general point $y \in \mathbb P^3_-$, the $4\times 4$ - Pfaffians of $M_{5}(x,y)$ cut out a variety $G_y \subset \mathbb P^{9}$ which can  be identified with a Pl\"ucker embedding of $\mathrm{Gr}(2, 5)$. We define $V_{10, y}$ to be $G_y \cap \tau^5(G_y)$.
\begin{thm}{\cite[Theorem 7.4 and Remark 7.5]{GrPo01}}
For a general $y\in \mathbb P_-^3$, we have that $V_{10, y}$ is a $H_{10}$-invariant singular Calabi-Yau threefold with exactly $100$ ordinary double points. It comes with a pencil of polarized abelian surfaces of type $(1, 10)$. One of its small resolutions comes with the abelian fibration $\pi_1: V^1_{10, y}\rightarrow \mathbb P^1$ of type $(1,10)$ and it is a smooth Calabi-Yau threefold of Picard rank $2$.
\end{thm}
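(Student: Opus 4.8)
The plan is to reconstruct the argument along the template already used in this paper for the types $(1,7)$ and $(1,8)$, now adapted to the Grassmannian geometry of $G_y = \mathrm{Gr}(2,5) \subset \mathbb P^9$. First I would exhibit the pencil of abelian surfaces directly. For $y \in \mathbb P^3_-$ the matrix $M_5(x,y)$ is alternating, so its $4\times 4$-Pfaffians cut out $G_y$, which is a copy of the Plücker-embedded $\mathrm{Gr}(2,5)$. The $H_{10}$-invariant $(1,10)$-polarized abelian surfaces embedded in $\mathbb P^9$ by their theta functions are known to lie on such a Grassmannian; I would check that a one-parameter family of them lies simultaneously on $G_y$ and on its translate $\tau^5(G_y)$, hence inside $V_{10,y} = G_y \cap \tau^5(G_y)$. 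This produces the abelian fibration $\pi_1$ abstractly, and the base is forced to be $\mathbb P^1$ by Remark~\ref{rmk: P^1}. Next I would establish that $V_{10,y}$ is irreducible of dimension three: two translates of the six-dimensional $G_y$ in $\mathbb P^9$ meet in the expected dimension $6+6-9=3$ for general $y$ (properness of the intersection by a Bertini-type genericity argument in the parameter $y$), while connectedness follows from the fact that $G_y$ is arithmetically Gorenstein of codimension three, its ideal being resolved by a self-dual Buchsbaum--Eisenbud Pfaffian complex. Irreducibility is then forced, since the pencil of abelian surfaces already sweeps out a three-dimensional subvariety.

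The delicate step is the singularity analysis. By the $H_{10}$-equivariance of the entire construction it suffices to examine $V_{10,y}$ near a single representative of the relevant orbit and then transport the conclusion by the group action, reducing the hundredfold local computation to one local model. At such a point I would apply the Jacobian criterion to the Pfaffian equations of both $G_y$ and $\tau^5(G_y)$, showing that the tangent cone is a rank-four quadric, i.e.\ an ordinary double point analytically isomorphic to $\{xw - yz = 0\}$, and that $V_{10,y}$ is smooth elsewhere. The count of exactly $100$ such nodes comes from the orbit of $y$ under $H_{10}/Z(H_{10}) = \mathbb Z_{10} \times \mathbb Z_{10}$ together with the $\tau^5$-symmetry, precisely paralleling the $49$, $64$ counts in the $(1,7)$ and $(1,8)$ cases.

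For the Calabi--Yau property I would compute $\omega_{V_{10,y}}$ by an adjunction adapted to this non-complete-intersection setting: using $-K_{\mathrm{Gr}(2,5)} = 5H$ together with the Pfaffian resolutions of $\mathcal O_{G_y}$ and $\mathcal O_{\tau^5(G_y)}$, one finds $\omega_{V_{10,y}} \cong \mathcal O_{V_{10,y}}$. The vanishing $h^1(\mathcal O) = 0$ then follows either from the Leray spectral sequence of $\pi_1$ or, after resolution, from a Lefschetz-type argument exactly as in Lemma~\ref{lemma: simplyconn}. Since each singularity is an ordinary double point, a small resolution $V^1_{10,y} \to V_{10,y}$ exists, is smooth, and is crepant, so $\omega_{V^1_{10,y}}$ remains trivial; the exceptional $\mathbb P^1$'s over the nodes, sitting over the base points of the pencil, become the $100$ sections of $\pi_1$. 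Finally, to see $\rho(V^1_{10,y}) = 2$, the classes $H$ and the fiber class $A$ are visibly independent, and to exclude further classes I would argue as in Proposition~\ref{prop: Mordell-(1, 7)} that the general fiber is a simple abelian surface, so the restriction $\mathrm{NS}(V^1_{10,y}) \to \mathrm{NS}(A_s)$ has rank one; a Shioda--Tate count, using that the fibration has irreducible general fibers, then bounds $\rho$ by $1 + 1 = 2$ (consistently, Corollary~\ref{cor: oguiso} would then force $\mathrm{MW}(\pi_1)$ to be finite).

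The main obstacle is the simultaneous singularity-and-adjunction analysis for the intersection of two Grassmannians. Because $V_{10,y}$ is \emph{not} a complete intersection, the usual Koszul adjunction does not apply directly, and one must instead extract $\omega_V$ and the exact local structure at the $100$ nodes from the Pfaffian resolutions; this, rather than the fibration-theoretic bookkeeping, is where the real work lies.
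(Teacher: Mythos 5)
The first thing to note is that the paper does not prove this statement: it is quoted, with attribution, from \cite[Theorem 7.4 and Remark 7.5]{GrPo01} and is used in Section~\ref{Grassmannians} purely as a black box feeding into the potential-density corollary (via Theorem~\ref{thm: doublefibration}) and the Mordell--Weil computation (via Corollary~\ref{cor: oguiso}). So there is no in-paper proof to compare yours against; what you have written is a reconstruction of the Gross--Popescu argument. As such, your outline has the right overall shape --- Heisenberg-invariant $(1,10)$-polarized abelian surfaces lie on the Pfaffian Grassmannians $G_y$, the threefold is the proper intersection of two arithmetically Gorenstein codimension-$3$ subvarieties of $\mathbb P^9$, and $\omega_{V_{10,y}}\cong\mathcal O_{V_{10,y}}$ comes from adjunction through the Buchsbaum--Eisenbud resolutions rather than a Koszul complex --- and your closing diagnosis of where the real work lies is accurate. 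But three steps have genuine holes.

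First, irreducibility: connectedness of $G_y\cap\tau^5(G_y)$ follows most cleanly from Fulton--Hansen (since $6+6>9$), but ``connected and containing a three-dimensional piece swept out by the pencil'' does not force irreducibility; a second three-dimensional component meeting the first along a surface is not excluded, and one needs a degree count (Bezout gives degree $5\cdot 5=25$ for the proper intersection) or a monodromy argument to rule it out. Second, the singular locus: $H_{10}$-equivariance reduces the local analysis to one point of the orbit of $y$, but it neither shows that this orbit is free of order $100$ nor --- and this is the hard part --- that there are no singular points \emph{outside} the orbit; the latter requires a global Jacobian or genericity-in-$y$ argument, not a single local model. Third, the Picard number: your Shioda--Tate bound needs both that every fiber of $\pi_1$ is irreducible (unchecked; compare the reducible degenerations in the Horrocks--Mumford classification) and that the restriction $\mathrm{NS}(V^1_{10,y})\to\mathrm{NS}(A_\eta)$ has rank-one image. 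You derive the latter from simplicity of $A_\eta$, but that implication is false (a simple abelian surface can have N\'eron--Severi rank up to $3$), and in Proposition~\ref{prop: Mordell-(1, 7)} and the Horrocks--Mumford section the logic runs the other way: the rank-one image is computed from explicit intersection numbers and then used to deduce simplicity. Your own parenthetical worry about circularity with Corollary~\ref{cor: oguiso} is well founded --- that corollary assumes $\rho=2$ and cannot be used to establish it. In \cite{GrPo01} the Picard number is obtained from a cohomological computation on the small resolution, not from fibration bookkeeping.
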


Moreover, it comes with another abelian fibration after applying flops twice (\cite[Remark 7.5]{GrPo01}), so the potential density follows from Theorem~\ref{thm: doublefibration}.
\begin{cor}
Let $k$ be a number field and suppose that $y \in \mathbb P_-^3(k)$ is general. Then $V_{10, y}$ satisfies the potential density property with respect to the Zariski topology.
\end{cor}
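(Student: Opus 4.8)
The plan is to deduce the statement as a direct application of Theorem~\ref{thm: doublefibration}, exactly as in the previously treated cases of polarization types $(1,6)$, $(1,7)$ and $(1,8)$. Thus the real work reduces to exhibiting, on the smooth model $V^1_{10,y}$, an effective movable non-big divisor $D$ whose Iitaka fibration is genuinely different from the given abelian fibration $\pi_1$, and then invoking the double-fibration theorem.

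First I would record that $V^1_{10,y}$ is an abelian Calabi--Yau threefold over the number field $k$ in the sense demanded by Theorem~\ref{thm: doublefibration}: by the cited theorem of Gross--Popescu it is a smooth Calabi--Yau threefold equipped with the abelian surface fibration $\pi_1 \colon V^1_{10,y} \to \mathbb P^1$ of type $(1,10)$, and (as in the analogous lower cases) the projective lines lying over the $100$ ordinary double points of $V_{10,y}$ supply sections of $\pi_1$, so that a section exists. Note that since the Picard rank is $2$, Corollary~\ref{cor: oguiso} already tells us the Mordell--Weil group is finite, which is precisely why the direct approach via a section of infinite order is unavailable and the double-fibration method is needed.

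Next I would produce the divisor $D$. By \cite[Remark 7.5]{GrPo01}, after applying flops twice one obtains a birational model $\tilde X$ carrying a second abelian surface fibration $\pi_2 \colon \tilde X \to \mathbb P^1$. On $\tilde X$ the class $\tilde D = \pi_2^{*}(\mathrm{pt})$ of a general fibre is base-point free, hence movable, and has Iitaka dimension $1$, hence is non-big. Let $D$ be the strict transform of $\tilde D$ on $V^1_{10,y}$. Since by Lemma~\ref{lemma:flops} the flops are isomorphisms in codimension $1$, the divisor $D$ is again effective and movable (the movable cone being preserved under such maps), while non-bigness persists because the Iitaka dimension is a birational invariant; moreover the Iitaka fibration of $D$ is birationally $\pi_2$. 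Running the $D$-MMP on $V^1_{10,y}$ then recovers $\tilde X$ with $\tilde D$ nef, matching the setup used in the proof of Theorem~\ref{thm: doublefibration}.

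Finally I would verify the non-factorization hypothesis: $\pi_1$ does not factor birationally through the Iitaka fibration of $D$, namely $\pi_2$. This holds because $\pi_1$ and $\pi_2$ are genuinely distinct fibrations, connected only through the two flops and with inequivalent general fibres, so a general fibre of $\pi_2$ is not contained in a fibre of $\pi_1$. With all hypotheses in place, Theorem~\ref{thm: doublefibration} yields potential density for $V^1_{10,y}$, and hence for $V_{10,y}$, since the two are birational over $k$. The only genuinely non-formal point, which I expect to be the main obstacle, is this last step: one must be certain that the second fibration produced in \cite[Remark 7.5]{GrPo01} is not merely $\pi_1$ re-expressed after the flops, i.e. that the two abelian fibrations really are distinct. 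Granting the Gross--Popescu description this is clear, but it is the crux on which the whole argument rests.
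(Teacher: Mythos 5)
Your proposal is correct and follows exactly the paper's route: the paper's entire argument is the one-line observation that, by \cite[Remark 7.5]{GrPo01}, two flops produce a second abelian surface fibration, whence Theorem~\ref{thm: doublefibration} applies. You have simply spelled out the verification of the hypotheses (movability and non-bigness of the strict transform of the second fibration's fibre class, and the non-factorization condition) that the paper leaves implicit.
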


The computation of the Mordell-Weil group of $\pi_1$ is similar to Proposition~\ref{prop: Mordell-(1, 7)}.

\begin{proposition}
\label{prop: Mordell-(1, §0)}
The Mordell-Weil group $\mathrm{MW}(\pi_1)$ is isomorphic to $\mathbb Z_{10} \oplus \mathbb Z_{10}$.
\end{proposition}


\bibliographystyle{alpha}
\bibliography{AbelianCY}

\end{document}